\newtheorem{conjecture}{Conjecture}[section]
\newtheorem{theorem}[conjecture]{Theorem}
\newtheorem{lemma}[conjecture]{Lemma}
\newtheorem{corollary}[conjecture]{Corollary}
\newtheorem{proposition}[conjecture]{Proposition}
\theoremstyle{definition}
\newtheorem{definition}[conjecture]{Definition}
\newtheorem{example}[conjecture]{Example}
\newtheorem{remark}[conjecture]{Remark}
\newtheorem{algorithm}[conjecture]{Algorithm}
\newcommand{\Z}{{\mathbb{Z}}}
\newcommand{\Q}{{\mathbb{Q}}}
\newcommand{\Qp}{{\Q_p}}
\newcommand{\calO}{{\mathcal{O}}}
\newcommand{\KK}{{K}}
\newcommand{\clK}{{\overline{K}}} % algebraic closure
\newcommand{\OK}{{\calO_\KK}} % valuation ring
\newcommand{\RK}{{\underline{\KK}}} % residual class field
\newcommand{\repsK}{R_\RK}  % fixed set of representatives of the elements of $\RK$ in $\OK$
\newcommand{\reps}[1]{R_{#1}}  % fixed set of representatives
\newcommand{\repss}[1]{\reps{#1}^\times}  % fixed set of representative
\newcommand{\RA}{\underline{A}} % residual (or associated) polynomial
\newcommand{\KL}{{L}}
\newcommand{\OL}{{\calO_\KL}}
\newcommand{\KN}{{N}}                 % the field N
\newcommand{\KM}{{M}}                 % the field M
\newcommand{\slopeden}{e} %  numerator of lambda
\newcommand{\slopenum}{h} %  denominator of lambda
\newcommand{\invA}{\mathcal{A}} % unnamed invariant A
\newcommand{\npol}{\mathcal{N}} % newton polygon
\newcommand{\rpol}{\mathcal{R}} % ramification polygon
\newcommand{\spol}{\mathcal{S}} % a segment
\newcommand{\ppol}{\mathcal{P}} % set of points of a polygon, segment, component
\newcommand{\rpolypoints}{\{(1,J_0),(p^{s_1},J_1),\ldots,(p^{s_{u-1}},J_{u-1}),(p^{s_u},0),\ldots,(n,0)\}}
\setlist[itemize,1]{label=$\bullet$}
\setlist[itemize,2]{label=$\circ$}
\setlist[itemize,3]{label=$\circ$}
\setlist[itemize,4]{label=$\bullet$}
\setlist[itemize,5]{label=$\bullet$}
\setlist[itemize,6]{label=$\bullet$}
\setlist[itemize,7]{label=$\bullet$}
\setlist[itemize,8]{label=$\bullet$}
\setlist[itemize,9]{label=$\bullet$}
\setlist[enumerate,1]{label={\rm(\alph*)}}
\setlist[enumerate,2]{label={\rm(\roman*)}}
\newcommand{\invset}{X} % set of invariants
\newcommand{\PP}{\mathfrak{p}}        % Prime ideal of p-adic field
\begin{document}

\title{Counting Extensions of $\mathfrak{p}$-adic Fields with Given Invariants}
\author{Brian Sinclair}
\begin{abstract}
We give two specializations of Krasner's mass formula.
The first formula yields the number of extensions of
a $\mathfrak{p}$-adic field with given, inertia degree, ramification index, discriminant, and
ramification polygon. We then refine this formula further to the case where an additional invariant related to the
residual polynomials of the segments of the ramification polygon is given.
\end{abstract}

\maketitle

\section{Introduction}

Krasner \cite{krasner} gave a formula for the number of totally ramified extensions, using his famous lemma as a main tool.  In addition to the choice of degree, his formula depends on the choice of discriminant.
This choice allows the construction of a finite set of Eisenstein polynomials which
generate all totally ramified extensions of given discriminant.

We specialize these methods to compute the number of totally ramified
extensions with the additional choice of ramification polygon and residual polynomials 
of segments, using the results on these invariants from \cite{pauli-sinclair}.
The descriptions of these invariants allow us to compute all of their possible values,
as is done in \cite{sinclair-phd} and partition the extensions of particular degree
and discriminant, the results of which we see in our examples in Section \ref{sec counting ex}.

\subsection{Notation}

We denote by $\Qp$ the field of $p$-adic numbers and by $v_p$ the (exponential) valuation
with $v_p(p)=1$.
Let $\KK$ be a finite extension of $\Qp$, let $\OK$ be the valuation ring of 
$\KK$, and denote by $\pi$ a uniformizer of $\OK$.
Let $\mathfrak{p} = (\pi)$ be the ideal generated by $\pi$.

For $\gamma\in\OK$ we denote by $\underline\gamma$ the class $\gamma+\mathfrak{p}$ in $\RK=\OK/\mathfrak{p}$.
We denote by $\repsK$ a fixed set of representatives of the elements of $\RK$ in $\OK$ and
if $\underline\delta\in\RK$ then $\widehat\delta\in\repsK$ is the element such
$\underline{\widehat\delta}=\underline\delta$.

We write $v_\PP$ for the valuation of $\KK$ that is normalized such that $v_\PP(\pi)=1$.
The unique extension of $v_\PP$ to an algebraic closure $\clK$ of $\KK$
(or to any intermediate field) is also denoted by $v_\PP$.
For $\alpha\in\KK$, we write $|\alpha|$ to denote its absolute value.

For a polynomial $f\in\OK[x]$ of degree $n$ we denote its coefficients by $f_i$ ($0\le i\le n$)
such that $f(x)=x^n+f_{n-1}x^{n-1}+\dots+f_0$ and write 
$ f_i = \sum_{j=0}^\infty f_{i,j} \pi^j $ where $f_{i,j}\in\repsK$.
If $f$ is Eisenstein then $f_n=1$, $f_{0,1}\ne 0$ and $f_{i,0}=0$ for $0\leq i \leq n-1$.

We will use the following notation to refer to elements in $\KK$
with finitely many non-zero $\pi$-adic coefficients.
Let $l,m$ be two integers with $1 \leq l \leq m$,
let $\reps{l,m}$ be a fixed set of representatives of the quotient $\PP^l / \PP^m$,
and let $\repss{l,m}$ be the subset of $\reps{l,m}$ whose elements have $\pi$-adic valuation of exactly $l$.

%%%%%%%%%%%%%%%%%%%%%%%%%%%%%%%%%%%%%%%%%%%%%%%%%%%%%%%%%%%%%%%%%%%%%%%%%%%%%%

\section{Effects of Extension Invariants on Generating Polynomials}

\subsection{Discriminant}

We recall some of the results Krasner used to obtain his formula for the
number of extensions of a $p$-adic field \cite{krasner},
which may also be found in \cite{pauli-roblot}.
The possible discriminants of finite extensions are given by Ore's conditions \cite{ore-bemerkungen}.

\begin{proposition}[Ore's conditions]\label{prop.ore}
Let $\KK$ be a finite extension of $\Qp$ with maximal ideal
$\PP$. Given $J_0 \in \Z$ let $a,b\in\Z$ be such that $J_0 = a_0n + b_0$ and
$0 \leq b_0 < n$. Then there exist totally ramified extensions
$\KL/\KK$ of degree $n$ and discriminant $\PP^{n+J_0-1}$ if and only if
\[ \min\{v_\PP(b_0) n, v_\PP(n) n\} \leq J_0 \leq v_\PP(n) n. \]
\end{proposition}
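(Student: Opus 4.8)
Before sketching, I want to flag what needs to be proven: the claim is the classical Ore discriminant bound. Write $J_0 = a_0 n + b_0$ with $0 \le b_0 < n$; then $\PP^{n+J_0-1}$ occurs as the discriminant of a degree-$n$ totally ramified extension iff $\min\{v_\PP(b_0)\,n,\ v_\PP(n)\,n\} \le J_0 \le v_\PP(n)\,n$. (When $b_0 = 0$ one reads $v_\PP(b_0)=\infty$, so the lower bound is $v_\PP(n)n$ and the interval degenerates to a point.)

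Let me think about how I'd prove this.

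The discriminant of $L/K$ for $L=K(\alpha)$ with $\alpha$ a root of an Eisenstein polynomial $f$ of degree $n$ is $v_\PP(\disc f) = v_\PP(N_{L/K}(f'(\alpha)))$. For Eisenstein $f$, $\mathcal{O}_L = \mathcal{O}_K[\alpha]$, so $d_{L/K} = v_\PP(\disc f) = v_L(f'(\alpha))$ (the different exponent), and the discriminant is $\PP^{d_{L/K}}$. So the task reduces to: for which $d$ does there exist an Eisenstein $f$ of degree $n$ with $v_L(f'(\alpha)) = d$? Setting $d = n + J_0 - 1$ translates the question to the stated range on $J_0$.

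**Plan.** The approach is a direct analysis of the different exponent $d = v_L(f'(\alpha))$ over all Eisenstein polynomials $f$ of degree $n$. Write $f(x) = x^n + \sum_{i=0}^{n-1} f_i x^i$ with $v_\PP(f_i) \ge 1$ and $v_\PP(f_0) = 1$. Then $f'(\alpha) = n\alpha^{n-1} + \sum_{i=1}^{n-1} i f_i \alpha^{i-1}$, and one computes $v_L$ of each term: $v_L(n\alpha^{n-1}) = n\cdot v_\PP(n) + (n-1)$, and $v_L(i f_i \alpha^{i-1}) = n\cdot v_\PP(i) + n\cdot v_\PP(f_i) + (i-1) \ge n\cdot v_\PP(i) + n + (i-1)$, with equality achievable by choosing $v_\PP(f_i)=1$.

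First I would establish the upper bound: since every term of $f'(\alpha)$ has $v_L \ge \min$ over $i$, and the term $n\alpha^{n-1}$ has the smallest possible valuation among all contributions when $v_\PP(n)$ is small — more precisely, all terms except possibly the $n\alpha^{n-1}$ term have valuation $\equiv i-1 \pmod n$ for various $i$, so they are pairwise incongruent mod $n$ and incongruent to $n-1 \pmod n$ unless $i = n$ — I'd argue $d \le v_L(n\alpha^{n-1}) = n\cdot v_\PP(n) + (n-1)$, giving $J_0 \le n\cdot v_\PP(n)$. The key point making this an equality-or-less statement is that the valuations of distinct monomials $i f_i \alpha^{i-1}$ lie in distinct residue classes mod $n$ (as $i$ ranges over $1,\dots,n$, using $v_\PP(f_i) \ge 1$ forces the class to be exactly $i - 1$), so there is no cancellation and $d$ equals the minimum of these valuations.

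Next, the lower bound and realizability: $d = \min_i\{n\cdot v_\PP(i) + n\cdot v_\PP(f_i) + (i-1)\}$ over $i \in \{1,\dots,n\}$ (with $f_n = 1$). The minimizing index dictates $J_0 \bmod n$: the term from index $i$ contributes residue $i-1$, so to get $J_0$ with $J_0 \equiv b_0 - 0$... more carefully, $d = n + J_0 - 1$ so $d \equiv J_0 - 1 \equiv b_0 - 1 \pmod n$, matching the term $i = b_0$ (when $b_0 \ne 0$) or $i = n$ (when $b_0 = 0$). For $b_0 \ne 0$: choosing $f_{b_0}$ with suitable valuation and making all other coefficients have large valuation, the minimum is attained at $i = b_0$ with value $n\cdot v_\PP(b_0) + n\cdot v_\PP(f_{b_0}) + (b_0 - 1)$; varying $v_\PP(f_{b_0}) \ge 1$ sweeps $J_0$ through $n\cdot v_\PP(b_0) + n\cdot v_\PP(f_{b_0})$, i.e. all values $\ge n(v_\PP(b_0)+1)$... and one must also check the case where $i=n$ (the leading term) gives a competing minimum $n\cdot v_\PP(n) + (n-1)$; when $J_0 < n\cdot v_\PP(b_0)$ is impossible from index $b_0$, the value comes instead from index $n$, yielding exactly $J_0 = n\cdot v_\PP(n)$. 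Reconciling these cases produces the stated $\min\{v_\PP(b_0)n, v_\PP(n)n\}$ lower bound. I'd also handle $b_0 = 0$ separately: then $n \mid J_0$, the only term with the right residue is $i=n$, forcing $J_0 = n\cdot v_\PP(n)$ exactly, consistent with the degenerate interval.

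**Main obstacle.** The delicate part is the realizability direction with full control: one must exhibit, for each target $J_0$ in the range, an honest Eisenstein polynomial where the minimum in the expression for $d$ is attained \emph{uniquely} (or at least without cancellation), and simultaneously verify no \emph{smaller} $d$ sneaks in from another coefficient. This amounts to a careful bookkeeping of which residue class mod $n$ is "cheapest" to hit, and checking the boundary interaction between the index-$b_0$ term and the index-$n$ (leading) term — that crossover is exactly what the $\min$ in the hypothesis encodes. The incongruence-mod-$n$ observation is the engine that rules out cancellation; making it airtight (including the case $p \mid n$ where $v_\PP(i)$ can jump) is where the real work lies. Everything else is a routine valuation computation using the fact that $\mathcal{O}_L = \mathcal{O}_K[\alpha]$ for Eisenstein $f$, which lets us identify $\disc f$ with the different and hence read off the discriminant exponent as $v_L(f'(\alpha))$.
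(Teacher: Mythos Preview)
The paper does not actually prove this proposition: it is stated as a classical result, attributed to Ore \cite{ore-bemerkungen} and also referenced via \cite{krasner} and \cite{pauli-roblot}, and is then used as input for the subsequent lemmas. So there is no ``paper's own proof'' to compare against.

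That said, your sketch is the standard argument and is essentially correct. The engine is exactly the one you identify: for Eisenstein $f$ with root $\alpha$ one has $\calO_L=\calO_K[\alpha]$, so $v_\PP(\disc f)=v_L(f'(\alpha))$; the monomials $i f_i \alpha^{i-1}$ in $f'(\alpha)$ have $v_L$-valuations lying in pairwise distinct residue classes modulo $n$, hence
\[
v_L(f'(\alpha))=\min_{1\le i\le n}\bigl\{\,n\,v_\PP(i)+n\,v_\PP(f_i)+(i-1)\bigr\},
\]
with no cancellation. From this both directions follow: the upper bound $J_0\le v_\PP(n)\,n$ comes from the $i=n$ term, and the residue-class argument forces the minimum to sit at $i=b_0$ (or $i=n$ when $b_0=0$), which together with $v_\PP(f_{b_0})\ge 1$ gives the lower bound and, by an explicit choice of coefficients, realizability. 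Your ``main obstacle'' paragraph correctly isolates the only genuine bookkeeping: ensuring that when one tunes $v_\PP(f_{b_0})$ to hit the target, no other index undercuts it, and handling the crossover with the $i=n$ term. One small slip: in the sweep you write ``all values $\ge n(v_\PP(b_0)+1)$'', but the correct range for $J_0$ is $a_0\ge v_\PP(b_0)$ (equivalently $J_0\ge v_\PP(b_0)\,n + b_0$), which you effectively recover in the next sentence; just tighten that passage when you write it out.
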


The proof of Ore's conditions yields a certain form for the generating polynomials 
of extensions with given discriminant.

\begin{lemma}\label{lem ell}
Let $J_0 = a_0n+b_0$ satisfy Ore's conditions.
For $1\leq i \leq n-1$, let
\[
l(i)=\left\{\begin{array}{ll}
\max\{ 2 + a_0 - v_\PP(i), 1\} & \mbox{ if } i < b_0, \\
\max\{ 1 + a_0 - v_\PP(i), 1\} & \mbox{ if } i \geq b_0. \\
\end{array}\right. 
\]
An Eisenstein polynomial $f\in\OK[x]$ 
has discriminant $\PP^{n+J_0-1}$ % where $J_0=a_0n+b_0$ with $0\le b_0<n$ fulfills Ore's conditions
if and only if $v_\PP(f_i) \geq l(i)$ and, if $b_0\neq 0$, $v_\PP(f_{b_0}) = l(b_0)$.
\end{lemma}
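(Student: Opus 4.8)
The plan is to compute the discriminant of an Eisenstein polynomial $f$ via the valuation of $f'(\alpha)$, where $\alpha$ is a root of $f$, using the standard identity $v_\PP(\disc f) = v_\PP(\norm_{\KL/\KK}(f'(\alpha)))$ and the fact that for a totally ramified extension of degree $n$ generated by a uniformizer $\alpha$, $v_\PP(\disc f) = n + J_0 - 1$ is equivalent to $v_{\PP_L}(f'(\alpha)) = n - 1 + J_0$ (recall $v_{\PP_L}$ restricted to $\KK$ is $n\cdot v_\PP$, and $\norm$ multiplies valuations by... wait, divides the $L$-valuation back down). Concretely, since $\alpha$ has $\PP_L$-valuation $1$, the term $f_i x^i$ contributes $i\alpha^{i-1}f_i$ to $f'(\alpha)$, which has $\PP_L$-valuation $n\cdot v_\PP(i) + (i-1) + n\cdot v_\PP(f_i)$ when $f_i\neq 0$ (and the leading term $n\alpha^{n-1}$ has valuation $n\cdot v_\PP(n) + (n-1)$). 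So the recipe is: $v_{\PP_L}(f'(\alpha)) = \min_i\{\, n v_\PP(i) + (i-1) + n v_\PP(f_i)\,\}$ provided the minimum is attained uniquely, and one must track when it can fail to be unique.

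First I would observe that the residues $i-1 \pmod n$ are distinct for $i$ in a fixed residue class mod $n$, so among the indices $i$ the only collisions in the quantity $n v_\PP(i) + (i-1) + n v_\PP(f_i)$ modulo $n$ come from comparing a fixed $i$ against $n$ itself; the index $i \equiv 0$ part is handled by choosing $i = b_0$ to be the unique index in $[1,n-1]$ with $i - 1 \equiv J_0 - 1 \pmod n$ when $b_0 \neq 0$, and by the leading term when $b_0 = 0$. This is exactly why the definition of $l(i)$ splits at $i < b_0$ versus $i \geq b_0$: writing $J_0 = a_0 n + b_0$, one wants $n v_\PP(i) + (i-1) + n v_\PP(f_i) \geq n - 1 + J_0 = n - 1 + a_0 n + b_0$, i.e. $n(v_\PP(i) + v_\PP(f_i)) \geq n a_0 + (b_0 - i) + n$; dividing by $n$ and noting $0 < b_0 - i + n \leq n$ exactly when $i \geq b_0$ (giving the ceiling that rounds the bound up by one when $i < b_0$) yields $v_\PP(f_i) \geq a_0 + 1 - v_\PP(i)$ for $i \geq b_0$ and $\geq a_0 + 2 - v_\PP(i)$ for $i < b_0$. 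Truncating at $1$ (since $f$ Eisenstein forces $v_\PP(f_i) \geq 1$ already) gives precisely $l(i)$.

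Then I would argue the two directions. For the "only if" direction: if $v_\PP(f_i) < l(i)$ for some $i$, the term $i\alpha^{i-1}f_i$ in $f'(\alpha)$ has $\PP_L$-valuation strictly below $n - 1 + J_0$, and since its residue mod $n$ is distinct from that of every other term with index in $[1,n-1]\setminus\{i\}$ and from the leading term (this is where the residue-class bookkeeping above is used), no cancellation can raise the valuation, so $v_\PP(\disc f) < n + J_0 - 1$. Similarly if $b_0 \neq 0$ and $v_\PP(f_{b_0}) > l(b_0)$, then every term of $f'(\alpha)$ with the "critical" residue class $J_0 - 1 \pmod n$ has valuation strictly exceeding $n - 1 + J_0$, hence $v_{\PP_L}(f'(\alpha)) \not\equiv J_0 - 1 \pmod n$ forces it to be of a different residue class; one checks this pushes $v_\PP(\disc f) \neq n + J_0 - 1$ (here Ore's conditions, Proposition \ref{prop.ore}, guarantee $n-1+J_0$ is the only attainable value of this shape, so any deviation is detectable). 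For the "if" direction, the stated inequalities force every term of $f'(\alpha)$ to have $\PP_L$-valuation $\geq n - 1 + J_0$, with the term of index $b_0$ (or the leading term, if $b_0 = 0$) achieving equality and being the unique term in its residue class, so $v_{\PP_L}(f'(\alpha)) = n - 1 + J_0$ exactly.

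The main obstacle is the uniqueness-of-minimum argument: one must carefully verify that when $b_0 \neq 0$ the index $b_0$ is the \emph{only} index in $[1,n-1]$ whose term can land in the residue class $J_0 - 1 \pmod n$, and separately rule out interference from the leading term $n\alpha^{n-1}$ (whose valuation is $\geq n v_\PP(n) + n - 1 \geq n - 1 + J_0$ by the upper half of Ore's conditions $J_0 \leq v_\PP(n) n$, with equality forced to the "wrong" residue class $-1 \pmod n$ unless $b_0 = 0$). Handling the boundary case $b_0 = 0$ — where the discriminant valuation $n + J_0 - 1 \equiv -1 \pmod n$ is realized by the leading term and the condition degenerates to just the inequalities $v_\PP(f_i) \geq l(i)$ with no equality constraint — should be done as a short separate check. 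Everything else is the routine Newton-polygon-style valuation arithmetic sketched above.
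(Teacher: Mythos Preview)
Your approach is correct and is precisely the standard argument the paper is pointing to: the paper does not give its own proof of this lemma but states that it falls out of ``the proof of Ore's conditions,'' and that proof is exactly the computation of $v_{\PP_L}(f'(\alpha))$ you carry out.

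One simplification worth noting: you worry at length about the ``uniqueness-of-minimum'' issue, but in fact the $n$ summands of $f'(\alpha)$ (the terms $i f_i \alpha^{i-1}$ for $1\le i\le n-1$ together with the leading $n\alpha^{n-1}$) have $\PP_L$-valuations lying in the $n$ \emph{distinct} residue classes $0,1,\dots,n-1$ modulo $n$, so the minimum is \emph{always} attained by a unique term and no cancellation can ever occur. This makes the ``only if'' direction cleaner than you indicate: if $v_\PP(f_i)\ge l(i)$ for all $i$ but $v_\PP(f_{b_0})>l(b_0)$, then every summand has valuation strictly exceeding $n-1+J_0$ (the ones with $i\neq b_0$ because their residue class mod $n$ differs from $b_0-1$, and the one with $i=b_0$ by hypothesis), so $v_{\PP_L}(f'(\alpha))>n-1+J_0$ directly---no appeal to Ore's conditions is needed to ``detect the deviation.'' One small point you should make explicit: when $b_0\neq 0$, the lower Ore bound forces $v_\PP(b_0)\le a_0$, so $l(b_0)=1+a_0-v_\PP(b_0)$ and the $\max$ with $1$ does not truncate; this is what ensures the equality condition $v_\PP(f_{b_0})=l(b_0)$ really pins down the valuation of $f'(\alpha)$.
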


Krasner's Lemma yields a bound over which the $\PP$-adic coefficients of the
coefficients of a generating polynomial can be chosen to be $0$, while still 
generating the same extensions.
We state the following lemma here, but its proof will be shown as a
consequence of Theorem \ref{thm discs-D}.

\begin{lemma}[Krasner]\label{lem krasner bound}
Each totally ramified extension of degree $n$ with discriminant $\PP^{n+J_0-1}$
can be generated by a polynomial with coefficients in $\reps{0,c}$, where $c > 1 + (2J_0)/n$.
\end{lemma}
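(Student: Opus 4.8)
The plan is to extract the bound from Krasner's Lemma, whose statement says that any $\alpha$ generating a totally ramified extension $\KL/\KK$ of degree $n$ has a Krasner neighborhood controlled by the distances to its conjugates, and that any $\beta$ sufficiently close to $\alpha$ generates the same extension. Concretely, if $\alpha$ is a root of an Eisenstein polynomial $f$, then the product of the differences $\alpha - \alpha^{(j)}$ over the nontrivial conjugates is (up to a unit) $f'(\alpha)$, and $v_\PP(f'(\alpha)) = d(\KL/\KK) = n + J_0 - 1$ where the last equality uses that the discriminant exponent equals $v_\PP$ of the different. So the maximal distance $\max_j v_\PP(\alpha - \alpha^{(j)})$ is at most $n + J_0 - 1$ (over the $n-1$ nontrivial conjugates, with normalized valuation on $\KL$, where $v_\PP(\pi) = 1$ but $v_\PP$ on $\KL$ takes values in $\frac1n\Z$; I will need to be careful to state everything in $\KL$-normalized valuation and then divide by $n$ to return to $\KK$).

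First I would fix a totally ramified $\KL/\KK$ of degree $n$ and discriminant $\PP^{n+J_0-1}$, pick a uniformizer $\pi_\KL$ of $\OL$ with Eisenstein minimal polynomial $f$ over $\KK$, and recall $v_\KL(\disc f) = v_\KL(f'(\pi_\KL)) = n + J_0 - 1$ (normalizing $v_\KL(\pi_\KL) = 1$, so $v_\KL$ restricted to $\KK$ is $n\cdot v_\PP$). Writing $f'(\pi_\KL) = \prod_{j=2}^{n}(\pi_\KL - \pi_\KL^{(j)})$ over the conjugates, there is some index with $v_\KL(\pi_\KL - \pi_\KL^{(j)}) \geq (n+J_0-1)/(n-1)$, and in fact for Krasner's Lemma what matters is an upper bound: $v_\KL(\pi_\KL - \pi_\KL^{(j)}) \le n + J_0 - 1$ for every $j$ since each factor divides the product and the others have nonnegative valuation. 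Then Krasner's Lemma says any $\beta \in \OL$ with $v_\KL(\pi_\KL - \beta) > n + J_0 - 1$ satisfies $\KK(\beta) \supseteq \KK(\pi_\KL) = \KL$, hence $= \KL$ by degree count, and moreover if $v_\KL(\beta - \pi_\KL)$ is large enough then $\beta$ is again a uniformizer of $\OL$ whose minimal polynomial is Eisenstein.

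The second step is the approximation bookkeeping: given $f$ Eisenstein of degree $n$ generating $\KL$, truncate each coefficient $f_i = \sum_j f_{i,j}\pi^j$ at $\pi$-adic precision $c$, i.e.\ replace $f$ by $\tilde f$ with $\tilde f_i \in \reps{0,c}$ agreeing with $f_i$ modulo $\PP^c$. Then $v_\PP(f_i - \tilde f_i) \geq c$ for all $i$, so for a root $\pi_\KL$ of $f$ (which has $v_\KL(\pi_\KL) = 1$) we get $v_\KL(f(\pi_\KL) - \tilde f(\pi_\KL)) = v_\KL(\tilde f(\pi_\KL)) \geq \min_i\big(n\cdot c + i\big) \geq nc$. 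Since $v_\KL(\tilde f'(\pi_\KL)) = v_\KL(f'(\pi_\KL)) = n + J_0 - 1$ (the derivative changes by something of valuation $\geq nc - 1 > n + J_0 - 1$ when $c$ is large enough, so the valuation of the derivative is unchanged), a standard Newton/Hensel estimate gives a root $\beta$ of $\tilde f$ with $v_\KL(\beta - \pi_\KL) \geq v_\KL(\tilde f(\pi_\KL)) - v_\KL(\tilde f'(\pi_\KL)) \geq nc - (n + J_0 - 1)$. For this to exceed the Krasner bound $n + J_0 - 1$ we need $nc - (n+J_0-1) > n + J_0 - 1$, i.e.\ $c > 2 + (2J_0 - 2)/n = 1 + (2J_0)/n + (1 - 2/n)$, which is implied by $c > 1 + (2J_0)/n$ for... hmm, actually I should double-check the constant: the cleanest route is to note $nc - (n+J_0 -1) > n+J_0-1 \iff nc > 2n + 2J_0 - 2 \iff c > 2 + (2J_0-2)/n$, and since $J_0 \geq 0$ and $n \ge 1$ this is weaker than $c > 1 + 2J_0/n$ exactly when $1 + 2J_0/n \ge 2 + (2J_0 - 2)/n$, i.e.\ $2/n \ge 1$, which fails for $n \ge 3$ — so the stated bound $c > 1 + 2J_0/n$ must come from a sharper version of Krasner's estimate (using that the relevant conjugate distance is $\le (n+J_0-1)/(n-1)$ rather than $n+J_0-1$, or from a more careful Newton-polygon analysis of $\tilde f$). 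The main obstacle, and the place I would spend the most care, is precisely pinning down this sharper Krasner distance estimate so the arithmetic yields the clean threshold $c > 1 + (2J_0)/n$; the rest is routine truncation and Hensel-type bookkeeping. Finally, once $\beta$ is a root of the truncated $\tilde f$ with $v_\KL(\beta - \pi_\KL)$ above the Krasner bound, Krasner's Lemma gives $\KK(\beta) = \KL$, so $\tilde f$ (or the minimal polynomial of $\beta$, which for degree reasons is $\tilde f$ up to the Eisenstein normalization) generates $\KL$ and has all coefficients in $\reps{0,c}$, completing the proof. The excerpt notes this lemma will be re-derived as a consequence of Theorem~\ref{thm discs-D}, so I would also remark that the argument here is the direct one and defer the sharpest constant to that theorem.
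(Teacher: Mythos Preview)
Your outline is essentially the paper's argument (the paper derives the lemma from Corollary~\ref{cor discs-D}, which in turn rests on Proposition~\ref{prop dfg}), but you have not closed the gap you yourself flag, and your proposed fix is incorrect.

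The missing sharp estimate is this: for an Eisenstein $f$ with root $\alpha$ and conjugates $\alpha_2,\dots,\alpha_n$, each $v_\KL(\alpha-\alpha_i)\ge 1$ because every $\alpha_i$ has $v_\KL(\alpha_i)=1$. Since $\sum_{i=2}^n v_\KL(\alpha-\alpha_i)=v_\KL(f'(\alpha))=n+J_0-1$, it follows that
\[
\max_{i\ge2} v_\KL(\alpha-\alpha_i)\ \le\ (n+J_0-1)-(n-2)\cdot 1\ =\ J_0+1.
\]
Combining this with your bound $v_\KL(\beta-\alpha)\ge nc-(n+J_0-1)$ gives Krasner's hypothesis exactly when $nc-(n+J_0-1)>J_0+1$, i.e.\ $c>1+2J_0/n$. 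Your suggested replacement $(n+J_0-1)/(n-1)$ is the \emph{average} of the conjugate valuations, not an upper bound on the maximum, so it cannot be used here; and it would in any case yield a different threshold.

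A second, smaller gap: the Hensel/Newton step you invoke requires $v_\KL(\tilde f(\alpha))>2\,v_\KL(\tilde f'(\alpha))$, i.e.\ $nc>2(n+J_0-1)$, which is \emph{not} implied by $c>1+2J_0/n$ when $n\ge3$. The paper avoids this by using the product formula of Proposition~\ref{prop dfg}: for $\beta$ the root of $\tilde f$ closest to $\alpha$,
\[
d(f,\tilde f)=\prod_{i=1}^n \max\{|\beta-\alpha|,|\alpha-\alpha_i|\}\ \ge\ |\beta-\alpha|\cdot\prod_{i\ge2}|\alpha-\alpha_i|=|\beta-\alpha|\cdot|f'(\alpha)|,
\]
which gives $v_\KL(\beta-\alpha)\ge nc-(n+J_0-1)$ directly from $d(f,\tilde f)\le|\pi|^c$ (Lemma~\ref{lem dfg}), with no Hensel precondition. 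With these two corrections your argument is complete and coincides with the paper's route.
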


By the previous two lemmas, every extension of a given discriminant can be generated
by a polynomial of a particular form with coefficients with only finitely many
non-zero $\PP$-adic coefficients.

\begin{proposition}\label{prop psi-disc}
Let $J_0 = a_0n+b_0$ satisfy Ore's conditions, $c > 1 + 2a_0 + \frac{2b_0}{n}$, $l(i)$ as in Lemma \ref{lem ell},
and let $\Psi_{n,J_0}(c)$ be the set of all polynomials $\psi(x)=x^n + \sum \psi_i x^i \in \OK[x]$ with
\[
\psi_i \in \left\lbrace \begin{array}{ll}
\repss{1,c}    & \mbox{ if } i = 0\\
\repss{l(i),c} & \mbox{ if } i = b_0 \neq 0 \\
\reps{l(i),c}  & \mbox{ if } 1\leq i\leq n-1 \mbox{ and } i \neq b_0. \\
\end{array}\right.
\]
The polynomials in $\Psi_{n,J_0}(c)$ are Eisenstein polynomials of discriminant $\PP^{n+J_0-1}$,
and each totally ramified extension of degree $n$ with discriminant $\PP^{n+J_0-1}$
can be generated by a polynomial in $\Psi_{n,J_0}(c)$.
\end{proposition}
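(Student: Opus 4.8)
The plan is to assemble Proposition~\ref{prop psi-disc} from the three preceding results, treating it essentially as a bookkeeping consolidation. First I would establish that every polynomial $\psi \in \Psi_{n,J_0}(c)$ is Eisenstein of the correct discriminant. Since the constant coefficient $\psi_0$ is drawn from $\repss{1,c}$, it has $\PP$-adic valuation exactly $1$, so $\psi_{0,0}=0$ and $\psi_{0,1}\neq 0$; since each $\psi_i$ for $1\le i\le n-1$ lies in $\reps{l(i),c}$ with $l(i)\ge 1$, we get $v_\PP(\psi_i)\ge 1$, hence $\psi_{i,0}=0$; and $\psi_n=1$. That is exactly the Eisenstein condition recalled in the Notation section. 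For the discriminant, I would invoke Lemma~\ref{lem ell}: membership in $\Psi_{n,J_0}(c)$ forces $v_\PP(\psi_i)\ge l(i)$ for all $i$ (the representative sets $\reps{l(i),c}$ and $\repss{l(i),c}$ both consist of elements of valuation $\ge l(i)$), and when $b_0\neq 0$ the choice $\psi_{b_0}\in\repss{l(b_0),c}$ forces $v_\PP(\psi_{b_0})=l(b_0)$ exactly. By Lemma~\ref{lem ell} these are precisely the conditions equivalent to $\disc = \PP^{n+J_0-1}$, so every element of $\Psi_{n,J_0}(c)$ generates an extension of the desired discriminant.

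Next I would prove the surjectivity statement: every totally ramified extension $\KL/\KK$ of degree $n$ and discriminant $\PP^{n+J_0-1}$ is generated by some $\psi\in\Psi_{n,J_0}(c)$. Here I would start from Lemma~\ref{lem krasner bound}: note that $c > 1 + 2a_0 + 2b_0/n = 1 + (2J_0)/n$, so the hypothesis of Krasner's bound is met and $\KL$ is generated by some Eisenstein polynomial $f$ all of whose coefficients lie in $\reps{0,c}$, i.e.\ have no $\PP$-adic coefficients beyond index $c-1$. By Lemma~\ref{lem ell} applied to $f$ (which has the right discriminant), $v_\PP(f_i)\ge l(i)$ for all $1\le i\le n-1$ and $v_\PP(f_{b_0})=l(b_0)$ when $b_0\neq 0$; being Eisenstein, $v_\PP(f_0)=1$. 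Combining "valuation $\ge l(i)$" with "$\PP$-adic coefficients vanish above index $c-1$" places $f_i$ in $\reps{l(i),c}$ for the generic indices, in $\repss{l(i),c}$ for $i=b_0\neq 0$, and $f_0\in\repss{1,c}$ — which is exactly the defining condition of $\Psi_{n,J_0}(c)$. Strictly speaking $\reps{l,c}$ is a \emph{fixed} set of representatives, so I would need the minor observation that one may reduce each coefficient of $f$ modulo $\PP^c$ to its chosen representative without changing the generated extension; since such a reduction alters $f$ only by a polynomial with all coefficients in $\PP^c$ and $c$ exceeds Krasner's bound, the reduced polynomial still generates $\KL$ (this is the content underlying Lemma~\ref{lem krasner bound}).

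The one point requiring a little care — and the main obstacle — is the compatibility between the numerical hypothesis stated as $c > 1 + 2a_0 + \tfrac{2b_0}{n}$ here and the form $c > 1 + (2J_0)/n$ appearing in Lemma~\ref{lem krasner bound}. Since $J_0 = a_0 n + b_0$, we have $(2J_0)/n = 2a_0 + 2b_0/n$, so the two conditions are literally identical and no real work is needed; I would simply remark on this identification so the reader sees that Lemma~\ref{lem krasner bound} applies verbatim. A secondary subtlety is ensuring that when $b_0 = 0$ the index $i=b_0$ does not occur among the indices $1\le i\le n-1$, so the three cases in the definition of $\Psi_{n,J_0}(c)$ are genuinely exhaustive and non-overlapping; this is immediate since $b_0=0$ means there is no "$i=b_0\neq 0$" case and index $0$ is handled separately. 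With those remarks in place the proposition follows directly by combining Lemmas~\ref{lem ell}, \ref{lem krasner bound}, and the Eisenstein criterion, with no further computation.
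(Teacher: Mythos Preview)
Your proposal is correct and matches the paper's approach: the paper does not give a formal proof but simply introduces the proposition with ``By the previous two lemmas\ldots'', treating it as an immediate consolidation of Lemma~\ref{lem ell} and Lemma~\ref{lem krasner bound}, which is exactly what you spell out. Your added remarks on the identification $1+2a_0+2b_0/n = 1+(2J_0)/n$ and on the $b_0=0$ case are correct and harmless elaborations of points the paper leaves implicit.
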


%%%%%%%%%%%%%%%%%%%%%%%%%%%%%%%%%%%%%%%%%%%%%%%%%%%%%%%%%%%%%%%%%%%%%

\subsection{Ramification Polygons}

To distinguish totally ramified extensions further we use an additional invariant, namely the {ramification polygon}.  

\begin{definition}\label{def ram pol}
Assume that the Eisenstein polynomial $f$ defines $\KL/\KK$.  The \emph{ramification polygon} $\rpol_f$ of $f$ is the Newton polygon $\mathcal{N}$ of the \emph{ramification polynomial} $\rho(x)=f(\alpha x + \alpha)/(\alpha^n)\in  K(\alpha)[x]$ of $f$, where $\alpha$ is a root of $f$.
\end{definition}
The ramification polygon $\rpol_f$ of $f$ is an invariant of $\KL/\KK$ (see \cite[Proposition 4.4]{greve-pauli} for example) called the ramification polygon of $\KL/\KK$ which we denote $\rpol_{\KL/\KK}$.
Ramification polygons have been used to study ramification groups and reciprocity \cite{scherk},
compute splitting fields and Galois groups \cite{greve-pauli},
describe maximal abelian extensions \cite{lubin},
and answer questions of commutativity in $p$-adic dynamical systems \cite{li}.

Let $f(x)=\sum_{i=0}^{n}f_ix^i\in\KK[x]$ be an Eisenstein polynomial, denote by $\alpha$ a root of $f$, and set
$\KL=\KK(\alpha)$. Let $\rho(x)=\sum_{i=0}^{n}\rho_i x^i\in\KL[x]$ be the ramification polynomial of $f$.
Then the coefficients of $\rho$ are
\[
\rho_i=\sum_{k=i}^n\binom{k}{i}\; f_k\; \alpha^{k-n} 
\]
As $v_\alpha(\alpha)=1$ and $v_\alpha(f_i)\in n\Z$ we obtain
\begin{equation}\label{eq val coeff rho}
v_\alpha(\rho_i) = \min_{i\leq k \leq n} \left\lbrace v_\alpha \left( \binom{k}{i}\; f_k\; \alpha^k \right) - n\right\rbrace
                 = \min_{i\leq k\leq n}\left\lbrace n \left[ v_\PP \left( \binom{k}{i}\; f_k \right) - 1\right] + k \right\rbrace.
\end{equation}

\begin{remark}
Throughout this paper we describe ramification polygons by the set of points 
$\ppol=\rpolypoints$ where not all points in $\ppol$ have to be vertices 
of the polygon $\rpol$.
We write $\rpol=\ppol$.
This gives a finer distinction between fields by their ramification polygons and also 
allows for an easier description of the invariant based on the residual 
polynomials of the segments of the ramification polygon, see Section \ref{sec res seg}.
\end{remark}

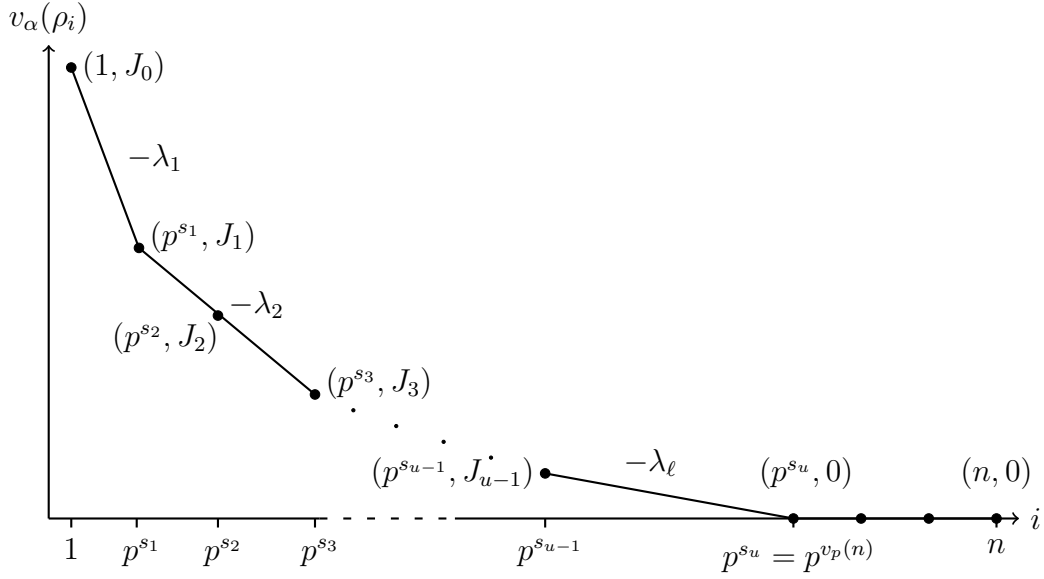
\begin{figure}
\begin{center}
\begin{tikzpicture}[scale=0.6]
\draw[thick] (-1,0) -- (5,0);
\draw[thick, loosely dashed] (5,0) -- (8,0);
\draw[thick,->] (8,0) -- (20.5,0) node[anchor=west] {$i$};
\draw[thick,->] (-1,0) -- (-1,10.5) node[anchor=south] {$v_\alpha(\rho_i)$};

\draw[thick] (-0.5,10) -- (1,6) -- (4.9,2.75) ;
\draw[thick] (10,1) -- (15.5,0) -- (20,0);
\filldraw[black] (-0.5,10) circle (3pt)
                 (1,6) circle (3pt)
                 (2.75,4.5) circle (3pt)
                 (4.9,2.75) circle (3pt)
                 (10,1) circle (3pt)
                 (15.5,0) circle (3pt)
                 (20,0) circle (3pt);
\filldraw[black] (5.75,2.4) circle (1pt)
                 (6.7,2.05) circle (1pt)
                 (7.75,1.7) circle (1pt)
                 (8.8,1.35) circle (1pt);
\filldraw[black] (17,0) circle (3pt)
                 (18.5,0) circle (3pt);

\foreach \x/\xtext in {-0.5/1, 0.95/p^{s_1}\!\!, 2.75/p^{s_2}\!\!, 4.9/p^{s_3}\!\!, 10/p^{s_{u-1}}\!\!, 15.5/p^{s_u}=p^{v_p(n)}\!\!, 20/n}
  \draw[thick] (\x,0) -- (\x,-0.2) node[anchor=north] {$\xtext$};

\draw (0.5,8)     node[anchor=west] {$-\lambda_1$}
      (2.75,4.75) node[anchor=west] {$-\lambda_2$}
      (11.5,1.25) node[anchor=west] {$-\lambda_\ell$}
      (-0.5,10)   node[anchor=west] {$(1,J_0)$}
      (1,6.25)    node[anchor=west] {$(p^{s_1},J_1)$}
      (3.0,4)     node[anchor=east] {$(p^{s_2},J_2)$}
      (4.9,3)     node[anchor=west] {$(p^{s_3},J_3)$}
      (10,1)      node[anchor=east] {$(p^{s_{u-1}},J_{u-1})$}
      (14.5,1)    node[anchor=west] {$(p^{s_u},0)$}
      (20,1)      node[anchor=center] {$(n,0)$} ;
\end{tikzpicture}
\end{center}

\caption{Ramification polygon of an Eisenstein polynomial $f$ of degree $n$ and discriminant $\PP^{n+J_0-1}$
with $\ell+1$ segments and $u-1$ points on the polygon with ordinate above $0$.
}\label{fig ram pol}
\end{figure}

We recall a necessary result concerning the points of ramification polygons
and their effects on generating polynomials from \cite[Section 3]{pauli-sinclair}.

\begin{lemma}\label{lem vf}
Let $f\in\OK[x]$ be an Eisenstein polynomial with ramification polygon
\[\rpol_f = \rpolypoints\]
where $J_i = a_in+b_i$ with $0\leq b_i\leq n-1$.
Then for $0\leq t\leq u$ we have
\[v_\PP(f_i) \geq \left\{\begin{array}{ll}
  2+a_t-v_\PP \binom{i}{p^{s_t}} \mbox{ for } p^{s_t} \leq i < b_t\\
  1+a_t-v_\PP \binom{i}{p^{s_t}} \mbox{ for } b_t \leq i \leq n-1\\
  \end{array}\right. ,\]
and $v_\PP(f_{b_t})=a_t+1-v_\PP \binom{b_t}{p^{s_t}}$ if $b_t\neq 0$.
Further, if there is no point with abscissa $p^i$, where $s_t < i < s_{t+1}$ for some $1\le t\le u$, then for $k$ such that $p^i\leq k \le n-1$,
\[
v_\PP(f_k) > \frac{1}{n}\left[ \frac{J_{t+1}-J_t}{p^{s_{t+1}}-p^{s_t}} (p^i-p^{s_t})+J_t-k \right]+1-v_\PP\binom{k}{p^i}
\]
\end{lemma}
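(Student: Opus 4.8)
The plan is to read off every inequality from the single valuation identity~\eqref{eq val coeff rho}, applied to the coefficients $\rho_{p^{s_t}}$ of the ramification polynomial lying over the distinguished abscissas of $\rpol_f$. Granting this, the first part of the statement is exactly the computation behind Lemma~\ref{lem ell} carried out once for each point $(p^{s_t},J_t)$; the case $t=0$, where $p^{s_0}=1$ and $\binom{i}{1}=i$, recovers the coefficient bounds of Lemma~\ref{lem ell}.

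First I would fix $t$ with $0\le t\le u$. Since $(p^{s_t},J_t)$ lies on $\rpol_f$, the Newton polygon of $\rho$, we have $v_\alpha(\rho_{p^{s_t}})\ge J_t=a_tn+b_t$. Substituting $i=p^{s_t}$ into~\eqref{eq val coeff rho} and using that every term of the minimum is at least the minimum, we get $n(v_\PP(\binom{k}{p^{s_t}}f_k)-1)+k\ge a_tn+b_t$ for $p^{s_t}\le k\le n-1$, i.e.\ $v_\PP(\binom{k}{p^{s_t}}f_k)\ge a_t+1+\tfrac{b_t-k}{n}$. The left side is an integer and $0\le b_t\le n-1$, so rounding up gives $v_\PP(\binom{k}{p^{s_t}}f_k)\ge a_t+1$ for $b_t\le k\le n-1$ and $\ge a_t+2$ for $p^{s_t}\le k<b_t$; putting $k=i$ and subtracting $v_\PP\binom{i}{p^{s_t}}$ yields the two displayed lower bounds for $v_\PP(f_i)$ (the Eisenstein bound $v_\PP(f_i)\ge 1$ covering any case where the right side would drop below $1$). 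For the equality at $i=b_t\ne 0$, note that the numbers $n(v_\PP(\binom{k}{p^{s_t}}f_k)-1)+k$ are pairwise incongruent modulo $n$ as $k$ runs over $\{p^{s_t},\dots,n\}$ --- being congruent to $k$ for $k<n$ and to $0$ for $k=n$ --- so there is no cancellation in~\eqref{eq val coeff rho}, its minimum is attained at a unique index $k_0$ with $k_0\equiv v_\alpha(\rho_{p^{s_t}})\pmod n$, and since the distinguished point records the true valuation ($v_\alpha(\rho_{p^{s_t}})=a_tn+b_t$) with $b_t\ne 0$, this forces $k_0=b_t$, in particular $b_t\ge p^{s_t}$. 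Then $n(v_\PP(\binom{b_t}{p^{s_t}}f_{b_t})-1)+b_t=a_tn+b_t$ gives exactly $v_\PP(f_{b_t})=a_t+1-v_\PP\binom{b_t}{p^{s_t}}$.

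For the last clause I would use the standard fact that the vertices of a ramification polygon occur only at $p$-power abscissas (and at $n$); see \cite{pauli-sinclair,greve-pauli}. Then the absence of a point of $\ppol$ with abscissa $p^i$ for $s_t<i<s_{t+1}$ means $\rpol_f$ has no vertex strictly between $p^{s_t}$ and $p^{s_{t+1}}$, so on that interval $\rpol_f$ is the single segment from $(p^{s_t},J_t)$ to $(p^{s_{t+1}},J_{t+1})$, while the point $(p^i,v_\alpha(\rho_{p^i}))$ --- which does not lie on $\rpol_f$, else it would have been listed --- lies strictly above that segment: $v_\alpha(\rho_{p^i})>J_t+\tfrac{J_{t+1}-J_t}{p^{s_{t+1}}-p^{s_t}}(p^i-p^{s_t})$. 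Substituting this strict bound for $v_\alpha(\rho_{p^i})$ into~\eqref{eq val coeff rho} applied at the abscissa $p^i$ and solving for $v_\PP(f_k)$ with $p^i\le k\le n-1$, exactly as above, yields the claimed strict inequality.

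The routine part is the integer rounding. The two steps that need real care are (i) justifying that a distinguished point, even one that is not a vertex, records $v_\alpha(\rho_{p^{s_t}})$ exactly --- this is what upgrades the bound at $i=b_t$ to an equality and pins down $k_0$ --- and (ii) the input that the vertices of $\rpol_f$ have $p$-power abscissas, without which ``no point of $\ppol$ between $p^{s_t}$ and $p^{s_{t+1}}$'' would not force a single segment there. I would dispose of (ii) by a short Kummer-theorem computation of $v_\PP\binom{k}{p^{j}}$, or else cite \cite{pauli-sinclair}.
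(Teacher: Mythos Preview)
The paper does not prove this lemma: it is introduced with ``We recall a necessary result \dots\ from \cite[Section~3]{pauli-sinclair}'' and stated without argument. Your proposal is therefore not being compared against a proof in this paper but against the natural derivation from~\eqref{eq val coeff rho}, and on that score it is correct and complete: bounding each term of the minimum in~\eqref{eq val coeff rho} by $J_t$, rounding to integers, and exploiting the pairwise distinct residues modulo~$n$ to pin down the unique minimizing index is exactly the argument one expects (and is the argument in the cited source). Your identification of the two delicate points---that the listed points $(p^{s_t},J_t)$ record the \emph{exact} valuation $v_\alpha(\rho_{p^{s_t}})$, and that vertices of $\rpol_f$ occur only at $p$-power abscissas so that consecutive listed points bound a single segment---is apt; both are part of the conventions set out in the paper's Remark following~\eqref{eq val coeff rho} and in \cite{pauli-sinclair}. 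One small clarification worth adding: in the last clause you infer ``single segment between $p^{s_t}$ and $p^{s_{t+1}}$'' from the absence of the single abscissa $p^i$; the real reason is that $s_t$ and $s_{t+1}$ are \emph{consecutive} indices in $\ppol$, so no $p^j$ with $s_t<j<s_{t+1}$ lies on $\rpol_f$, hence (vertices being at $p$-powers) there is no vertex in that range.
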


A single point on the ramification polygon can give a lower bound for the valuation of multiple coefficients,
so we define functions that give us the minimum valuation of a coefficient based on a point (or lack thereof) on the ramification polygon with abscissa $p^s$ and minimum valuations based on the entire polygon.

In the following definition, due to Lemma \ref{lem vf},
$l_{\rpol_f}(i,s)$ for $1\leq s\leq s_u$ and $p^s\leq i\leq n$ gives the minimum valuation of $f_i$
due to a point (or lack thereof) with abscissa $p^s$ on the ramification polygon $\rpol_f$ of $f$.
By taking the maximum of these over all $s$, we define $L_{\rpol_f}(i)$ so that
$v_\PP(f_i)\ge L_{\rpol_f}(i)$ for $1\le i \le n-1$.

\begin{definition}\label{def ells}
Let $\rpol_f$ be the ramification polygon of $f$ with points
\[ \rpol_f = \rpolypoints, \]
and where $J_i = a_in+b_i$ with $0\leq b_i\leq n-1$. For $0\leq t\leq u$, let
\[
l_{\rpol_f}(i,s_t)=\left\{\begin{array}{ll}
\max\{ 2 + a_t - v_\PP \binom{i}{p^{s_t}}, 1\} & \mbox{ if } p^{s_t} \leq i < b_t, \\
\max\{ 1 + a_t - v_\PP \binom{i}{p^{s_t}}, 1\} & \mbox{ if } i \geq b_t. \\
\end{array}\right. 
\]
If there is no point above $p^u$ with $s_t<u<s_{t+1}$, then for $p^u\leq i\leq n-1$, let
\[
l_{\rpol_f}(i,u) = \max\left\lbrace
\left\lceil \frac{1}{n}\left[ \frac{J_{t+1}-J_t}{p^{s_{t+1}}-p^{s_t}} (p^u-p^{s_t})+J_t-k \right]+1-v_\PP\binom{k}{p^u} \right\rceil
%\left\lceil\frac{1}{n}\left[ \frac{J_{t+1}-J_t}{p^{s_{t+1}-s_t}-1} (p^{u-s_t}-1)+J_t-i \right]+1-v_\PP\binom{i}{p^u} \right\rceil
,1\right\rbrace
\]
Finally, set
\[
L_{\rpol_f}(i) = 
\left\{\begin{array}{ll}
1 &\mbox{if } i=0\\
\max \{l_{\rpol_f}(i,t) : p^t \leq i\} &\mbox{if }1\leq i\leq n-1\\
0 &\mbox{if } i=n
\end{array}\right..
\]
\end{definition}

Definition \ref{def ells} yields a simple way to describe a set of conditions for an Eisenstein polynomial to generate an extension with given ramification polygon $\rpol$.

\begin{proposition}\label{prop R iff}
An Eisenstein polynomial $f$ has ramification polygon
\[\rpol = \rpolypoints,\]
where $J_i = a_in+b_i$ with $0\leq b_i\leq n-1$,
if and only if
\begin{enumerate}
\item $v_\PP(f_i) \geq L_\rpol(i)$
\item For $0\leq t\leq u$, $v_\PP(f_{b_t})=L_\rpol(b_t)$ if $b_t\neq 0$.
\end{enumerate}
where $L_\rpol$ is as defined in Definition \ref{def ells}.
\end{proposition}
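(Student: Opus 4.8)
The plan is to deduce Proposition \ref{prop R iff} from Lemma \ref{lem vf} together with the translation formula \eqref{eq val coeff rho} relating $v_\alpha(\rho_i)$ to the valuations $v_\PP\binom{k}{i}f_k$. The forward direction is essentially a bookkeeping exercise: if $f$ has ramification polygon $\rpol$ with the stated points, then Lemma \ref{lem vf} already gives, for each $t$ with $0\le t\le u$, a lower bound on $v_\PP(f_i)$ of the form $2+a_t-v_\PP\binom{i}{p^{s_t}}$ or $1+a_t-v_\PP\binom{i}{p^{s_t}}$ (according to whether $i<b_t$ or $i\ge b_t$), plus an analogous strict inequality coming from an abscissa $p^u$ that fails to carry a point of $\rpol$. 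Since $v_\PP(f_i)$ is a nonnegative integer whenever $1\le i\le n-1$ (as $f$ is Eisenstein, $f_{i,0}=0$ forces $v_\PP(f_i)\ge 1$), each of these bounds may be replaced by its ceiling and by its maximum with $1$ without loss; this is exactly how $l_{\rpol}(i,s_t)$ and $l_{\rpol}(i,u)$ are defined. Taking the maximum over all admissible $s$ (equivalently, over all $t$ with $p^t\le i$) then yields $v_\PP(f_i)\ge L_\rpol(i)$, which is condition (a). For condition (b), Lemma \ref{lem vf} asserts that when $b_t\ne 0$ the inequality for the coefficient $f_{b_t}$ coming from the point $(p^{s_t},J_t)$ is in fact an \emph{equality}, $v_\PP(f_{b_t})=a_t+1-v_\PP\binom{b_t}{p^{s_t}}=l_\rpol(b_t,s_t)$; one must then check that no other point forces a strictly larger lower bound on this same coefficient, so that the maximum defining $L_\rpol(b_t)$ is attained precisely at $s_t$ and $v_\PP(f_{b_t})=L_\rpol(b_t)$.

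For the converse direction, suppose $f$ is Eisenstein satisfying (a) and (b), and let $\npol$ be its ramification polygon, determined via \eqref{eq val coeff rho} by the points $\bigl(i,\min_{i\le k\le n}\{n[v_\PP(\binom{k}{i}f_k)-1]+k\}\bigr)$. I would argue that the inequalities $v_\PP(f_i)\ge L_\rpol(i)$ force the Newton polygon of $\rho$ to lie on or above $\rpol$, while the equalities $v_\PP(f_{b_t})=L_\rpol(b_t)$ force the designated vertices $(p^{s_t},J_t)$ to actually be achieved, pinning the polygon down to $\rpol$ exactly. Concretely, fix an abscissa $p^s$ appearing among $s_1,\dots,s_u$. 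From \eqref{eq val coeff rho}, $v_\alpha(\rho_{p^s})=\min_{p^s\le k\le n}\{n[v_\PP(\binom{k}{p^s}f_k)-1]+k\}$. The inequalities from (a) — unwound through the definition of $L_\rpol$, which absorbs precisely the terms $v_\PP\binom{k}{p^s}$ — show each term of this min is $\ge J_s$ (here one uses that the $\max$ with $1$ in $l_\rpol$ only relaxes the bound, never strengthens it beyond what is needed); the equality in (b) for $k=b_s$ — or, when $b_s=0$, a separate argument using the Eisenstein condition $v_\PP(f_0)=1$ at $k=n$ — shows that at least one term equals $J_s$ exactly. Hence $(p^s,J_s)\in\npol$ for every such $s$, and combined with the lower bound on the remaining abscissae and the fact that $(n,0)$ is always a vertex ($\rho_n$ comes from $k=n$, $f_n=1$), we get $\npol=\rpol$ as labeled point sets. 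The interpolated bound for abscissae $p^u$ carrying no point must also be checked to be consistent, but since such $p^u$ contributes no point to $\rpol$ by hypothesis, only the inequality (not an equality) is needed there, and it follows from the $l_\rpol(i,u)$ clause.

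The main obstacle I anticipate is the careful handling of the $\max\{\cdot,1\}$ and the ceilings in Definition \ref{def ells}: these truncations mean that $L_\rpol(i)$ is \emph{not} literally the valuation predicted by the polygon but only a correct \emph{lower} bound, so in the converse one must argue that the places where truncation occurs are exactly the places where the polygon point in question lies below the ``$v_\PP=1$ floor'' and hence imposes no real constraint — i.e., those coefficients are free to be whatever an Eisenstein polynomial allows without perturbing the polygon. A secondary subtlety is the boundary case $b_t=0$: then there is no equality condition at $p^{s_t}$ in (b), and one must verify that the vertex $(p^{s_t},J_t)$ with $J_t=a_tn$ is nonetheless forced, which I expect follows because $p^{s_t}\le n$ and the term $k=n$ in \eqref{eq val coeff rho} contributes $n[v_\PP\binom{n}{p^{s_t}}-1]+n$, while compatibility with neighboring segments pins down the value; this is the kind of edge-case verification that tends to be fiddly but not deep. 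Finally, one should remark that the interpolation inequality in Lemma \ref{lem vf} for a missing abscissa $p^u$ was stated with a variable $k$ on the right-hand side — I would read this (as Definition \ref{def ells} does) as holding for the relevant range of $k$, and take the induced bound on $v_\PP(f_k)$ accordingly; making this reading explicit is worthwhile for the reader.
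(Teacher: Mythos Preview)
The paper does not actually supply a proof of Proposition~\ref{prop R iff}; it is stated immediately after Definition~\ref{def ells} with the remark that the definition ``yields a simple way to describe a set of conditions,'' and the underlying content is attributed to \cite[Section~3]{pauli-sinclair} via Lemma~\ref{lem vf}. Your proposal is therefore not competing with a written proof but filling in what the paper leaves implicit, and your plan --- deduce (a) and (b) from Lemma~\ref{lem vf} in the forward direction, and use \eqref{eq val coeff rho} together with (a) and (b) to reconstruct the polygon in the converse --- is exactly the intended argument. The subtleties you flag (the $\max\{\cdot,1\}$ truncations, the ceilings, the $b_t=0$ vertices, and the consistency check that $l_\rpol(b_t,s_t)$ realises the maximum defining $L_\rpol(b_t)$) are genuine and worth spelling out, but they do not signal a different route, only the bookkeeping the paper elides.
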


Similar to the case of the discriminant, we have an function for the lower
bound of the valuation of each coefficient, and so we can construct a finite
set of Eisenstein polynomials with given ramification polygon as a consequence
of Proposition \ref{prop R iff} and Lemma \ref{lem krasner bound}.

\begin{proposition}\label{prop psi-rpol}
For a ramification polygon $\rpol$ with points $(p^0,J_0),(p^{s_1},J_1),\dots,\,(p^{s_\ell},J_\ell),$
where $J_i = a_in+b_i$, let $B_\rpol$ be the set of non-zero $b_i$.
Let $c > 1 + 2a_0 + \frac{2b_0}{n}$,
and let $\Psi_{n,J_0,\rpol}(c)$ be the set of all polynomials $\psi(x)=x^n + \sum \psi_i x^i \in \OK[x]$ with
\[
\psi_i \in \left\lbrace \begin{array}{ll}
\repss{1,c}          & \mbox{ if } i = 0\\
\repss{L_\rpol(i),c} & \mbox{ if } i \in B_\rpol \\
\reps{L_\rpol(i),c}  & \mbox{ if } 1\leq i\leq n-1 \mbox{ and } i \notin B_\rpol \\
\end{array}\right.
\]
The polynomials in $\Psi_{n,J_0,\rpol}(c)$ generate all totally ramified extensions of $\KK$ of degree $n$, discriminant $\PP^{n+J_0-1}$, and ramification polygon $\rpol$.
\end{proposition}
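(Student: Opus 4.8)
The plan is to prove the two implications packaged in the statement, following the template of Proposition~\ref{prop psi-disc} but using Proposition~\ref{prop R iff} in place of Lemma~\ref{lem ell} and Lemma~\ref{lem krasner bound} in place of the discriminant bound: first, that every $\psi\in\Psi_{n,J_0,\rpol}(c)$ is an Eisenstein polynomial with ramification polygon $\rpol$ (hence defines a totally ramified extension of $\KK$ of degree $n$ with the stated invariants); second, that every totally ramified extension $\KL/\KK$ of degree $n$ with discriminant $\PP^{n+J_0-1}$ and ramification polygon $\rpol$ is defined by some $\psi\in\Psi_{n,J_0,\rpol}(c)$. One preliminary remark streamlines both: the leftmost point $(1,J_0)$ of a ramification polygon determines the discriminant exponent to be $n+J_0-1$, so an Eisenstein polynomial with ramification polygon $\rpol$ automatically has discriminant $\PP^{n+J_0-1}$; thus the discriminant condition in the statement is implied by the ramification-polygon condition, and it suffices to control the ramification polygon via Proposition~\ref{prop R iff}.

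For the first implication, take $\psi=x^n+\sum_{i=0}^{n-1}\psi_ix^i\in\Psi_{n,J_0,\rpol}(c)$. Membership $\psi_0\in\repss{1,c}$ gives $v_\PP(\psi_0)=1$, and for $1\le i\le n-1$ membership in $\reps{L_\rpol(i),c}$ or $\repss{L_\rpol(i),c}$ gives $v_\PP(\psi_i)\ge L_\rpol(i)\ge 1$, since each entry of the maximum defining $L_\rpol(i)$ is at least $1$; hence $\psi$ is Eisenstein. Condition~(1) of Proposition~\ref{prop R iff}, that $v_\PP(\psi_i)\ge L_\rpol(i)$, is exactly what these representative sets record. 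For condition~(2), let $0\le t\le u$ with $b_t\ne 0$; then $b_t$ is a nonzero $b_i$ attached to a point of $\rpol$, so $b_t\in B_\rpol$, whence $\psi_{b_t}\in\repss{L_\rpol(b_t),c}$ forces $v_\PP(\psi_{b_t})=L_\rpol(b_t)$. By Proposition~\ref{prop R iff}, $\psi$ has ramification polygon $\rpol$, and therefore discriminant $\PP^{n+J_0-1}$.

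For the second implication, let $\KL/\KK$ be totally ramified of degree $n$ with discriminant $\PP^{n+J_0-1}$ and ramification polygon $\rpol$. Since $c>1+2a_0+\frac{2b_0}{n}=1+\frac{2J_0}{n}$, Lemma~\ref{lem krasner bound} provides an Eisenstein polynomial $g=x^n+\sum_{i=0}^{n-1}g_ix^i$ defining $\KL/\KK$ with every $g_i\in\reps{0,c}$. Because the ramification polygon is an invariant of $\KL/\KK$, the polynomial $g$ has ramification polygon $\rpol$, so Proposition~\ref{prop R iff} gives $v_\PP(g_i)\ge L_\rpol(i)$ for $1\le i\le n-1$ and $v_\PP(g_{b_t})=L_\rpol(b_t)$ whenever $b_t\ne 0$. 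Intersecting with $g_i\in\reps{0,c}$: $v_\PP(g_0)=1$ puts $g_0\in\repss{1,c}$; for $i\in B_\rpol$, $v_\PP(g_i)=L_\rpol(i)$ puts $g_i\in\repss{L_\rpol(i),c}$; and for $1\le i\le n-1$ with $i\notin B_\rpol$, $v_\PP(g_i)\ge L_\rpol(i)$ puts $g_i\in\reps{L_\rpol(i),c}$. (One checks $L_\rpol(i)<c$ directly from Definition~\ref{def ells} using $a_t\le a_0$, exactly as in the discriminant case of Proposition~\ref{prop psi-disc}; for $i\in B_\rpol$ this is automatic since $g_i\ne 0$.) Hence $g\in\Psi_{n,J_0,\rpol}(c)$, as desired.

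The steps above are routine valuation bookkeeping, and I expect no genuine obstacle. The only non-mechanical ingredient is the remark that the ramification polygon of an Eisenstein polynomial determines its discriminant, which is exactly what permits reducing both implications to Proposition~\ref{prop R iff} together with Lemma~\ref{lem krasner bound}; granting that, the proposition is essentially their conjunction --- Proposition~\ref{prop R iff} translating the ramification polygon into coefficient-valuation (in)equalities, and Lemma~\ref{lem krasner bound} bounding how many $\PP$-adic digits of the coefficients one must keep.
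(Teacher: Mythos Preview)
Your proof is correct and matches the paper's approach exactly: the paper does not spell out a proof of this proposition but simply remarks that it is ``a consequence of Proposition~\ref{prop R iff} and Lemma~\ref{lem krasner bound},'' which is precisely the two-ingredient argument you have written out in detail. Your remark that the leftmost point $(1,J_0)$ of the ramification polygon already fixes the discriminant is the right way to absorb the discriminant hypothesis, and your handling of the bound $L_\rpol(i)<c$ (automatic for $i\in B_\rpol$ since $g_i\ne 0$) is appropriate.
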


%%%%%%%%%%%%%%%%%%%%%%%%%%%%%%%%%%%%%%%%%%%%%%%%%%%%%%%%%%%%%%%%%%%%%

\subsection{Residual Polynomials of Segments}\label{sec res seg}

Residual (or associated) polynomials were introduced by Ore \cite{ore-newton}. 
They yield information about the unramified part of the extension generated by the factors of a polynomial. This makes them a useful tool in the computation of ideal decompositions and integral bases \cite{guardia-montes-nart,montes,montes-nart} the closely related problem of polynomial factorization over local fields \cite{guardia-nart-pauli, pauli-pf2},
and the efficient enumeration of totally ramified local field extensions \cite{pauli-sinclair}.

\begin{definition}[Residual polynomial]\label{def res poly}
Let $\KL$ be a finite extension of $\Qp$ with uniformizer $\alpha$.  
Let  $\rho(x)=\sum_i \rho_i x^i\in\OL[x]$.
Let $\spol$ be a segment of the Newton polygon of 
$\rho$  of length $l$ with
endpoints $(k,v_\alpha(\rho))$ and $(k+l,v_\alpha(\rho_{k+l}))$, and slope $-\slopenum/\slopeden=\left(v_\alpha(\rho_{k+l})-v_\alpha(\rho_k)\right)/l$
then
\[
\RA(x)=\sum_{j=0}^{l/e}\underline{\rho_{je+k}\alpha^{jh-v_\alpha(\rho_k)}}x^{j}\in\RK[x]
\]
is called the \emph{residual polynomial} of $\spol$.
\end{definition}

\begin{remark}
The ramification polygon of a polynomial $f$ and the residual polynomials of its segments yield a subfield $\KM$ of the splitting field $\KN$ of 
$f$, such  that $\KN/\KM$ is a $p$-extension \cite[Theorem 9.1]{greve-pauli}. 
\end{remark}

From the definition we obtain some of the properties of residual polynomials.

\begin{lemma}\label{lem res pol}
Let $\KL$ be a finite extension of $\Qp$ with uniformizer $\alpha$.  Let $\rho\in\OL[x]$.
Let $\npol$ be the Newton polygon of $\rho$  with segments $\spol_1,\dots,\spol_\ell$ and let
$\RA_1,\dots,\RA_\ell$ be the corresponding residual polynomials.
\begin{enumerate}
\item \label{lem res pol int}
If $\spol_i$ 
has integral slope $-\slopenum\in\Z$
with endpoints $(k,v_\alpha(\rho_k))$ and $(k+l,v_\alpha(\rho_{k+l}))$ then
$
\RA_i(x)=\sum_{j=0}^{l}\underline{\rho_{j+k}\alpha^{j\slopenum-v_\alpha(\rho_k)}}\,x^{j}
=\underline{\rho(\alpha^\slopenum x)\alpha^{-k-v_\alpha(\rho_k)}\,x^{n-l}}
\in\RK[x].
$
\item \label{lem res pol end}
If for $1\le i\le\ell-1$ the leading coefficient of $\RA_i$ is denoted by $\RA_{i,\deg\RA_i}$ and
$\RA_{i+1,0}$ is the constant coefficient of $\RA_{i+1}$ then $\RA_{i,\deg\RA_i}=\RA_{i+1,0}$. 
\item If $\rho$ is monic then $\RA_\ell$ is monic.
\end{enumerate}
\end{lemma}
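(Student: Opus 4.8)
The plan is to prove the three parts of Lemma~\ref{lem res pol} directly from Definition~\ref{def res poly}, working segment by segment along the Newton polygon $\npol$ of $\rho$. For part~\eqref{lem res pol int}, note that when the slope $-\slopenum$ is an integer we have $\slopeden = 1$, so the sum in Definition~\ref{def res poly} runs over $j = 0,\dots,l$ with step size $\slopeden = 1$, which gives the first displayed expression immediately. For the second expression I would observe that, since $(k,v_\alpha(\rho_k))$ and $(k+l,v_\alpha(\rho_{k+l}))$ are the endpoints of a segment of slope $-\slopenum$, every coefficient $\rho_{j+k}$ of $\rho$ lying over this segment satisfies $v_\alpha(\rho_{j+k}) \ge v_\alpha(\rho_k) - j\slopenum$, with equality exactly when $(j+k, v_\alpha(\rho_{j+k}))$ lies on the segment; hence $\underline{\rho_{j+k}\alpha^{j\slopenum - v_\alpha(\rho_k)}}$ is precisely the reduction mod $\PP$ of the coefficient of $x^{j+k}$ in $\rho(\alpha^\slopenum x)$, rescaled by $\alpha^{-k - v_\alpha(\rho_k)}$. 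Collecting these and accounting for the shift in degree (the segment starts at abscissa $k$, and $\rho$ has degree $n$, so we multiply by $x^{n-l}$ to align constant terms — or rather to record that the nonzero reductions occupy positions $0$ through $l$) yields the claimed identity $\underline{\rho(\alpha^\slopenum x)\alpha^{-k-v_\alpha(\rho_k)}x^{n-l}}$. The one point requiring a little care is the bookkeeping of which powers of $x$ survive reduction and why the $x^{n-l}$ factor is the correct normalization; this is the mildly fiddly part of~\eqref{lem res pol int}, though it is purely a matter of tracking valuations.

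For part~\eqref{lem res pol end}, the key observation is that consecutive segments $\spol_i$ and $\spol_{i+1}$ of $\npol$ share the vertex between them: if $\spol_i$ has right endpoint $(k', v_\alpha(\rho_{k'}))$, then $\spol_{i+1}$ has left endpoint $(k', v_\alpha(\rho_{k'}))$ with the same $\rho_{k'}$. I would write out the leading coefficient of $\RA_i$ using Definition~\ref{def res poly} applied to $\spol_i$ (it is $\underline{\rho_{k'}\alpha^{(\text{something})}}$ coming from the top index $j = l_i/e_i$) and the constant coefficient of $\RA_{i+1}$ (it is $\underline{\rho_{k'}\alpha^{(\text{something})}}$ coming from the index $j = 0$). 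Both reductions involve the same $\rho_{k'}$; the exponents of $\alpha$ attached to each are forced by the requirement that the reduced element be a unit — equivalently, both exponents equal $-v_\alpha(\rho_{k'}) + (\text{the value of the affine functional defining the relevant segment at abscissa } k')$, and since $(k', v_\alpha(\rho_{k'}))$ lies on both segments these functionals agree there. Hence the two reductions coincide.

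Part~(c) is then essentially immediate: if $\rho$ is monic of degree $n$, its leading coefficient is $\rho_n = 1$, which sits at the right endpoint of the last segment $\spol_\ell$, so the leading coefficient of $\RA_\ell$ is $\underline{\rho_n \alpha^{0}} = \underline{1} = 1$ in $\RK[x]$ (the exponent of $\alpha$ is $0$ because $v_\alpha(\rho_n) = 0$ and the segment's affine functional takes the value $0$ there). I would state this as a one-line consequence. The main obstacle overall is not conceptual but notational: in~\eqref{lem res pol int} and~\eqref{lem res pol end} one must be scrupulous about the indices $k$, $l$, $e$, $h$ and the $\alpha$-exponents, since Definition~\ref{def res poly} parametrizes coefficients in blocks of size $e$ and a sign or offset error propagates through the whole identity. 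I would therefore set up clean notation for the affine functional $\lambda_{\spol}(x)$ whose graph is the line extending $\spol$, rewrite each residual coefficient as $\underline{\rho_{\bullet}\,\alpha^{-v_\alpha(\rho_k)\,+\,\lambda_{\spol}(\bullet) - \lambda_{\spol}(k)}}$ wait — more simply, as the reduction of $\rho_\bullet$ times the appropriate power of $\alpha$ making it a unit, and let the geometry of shared vertices do the rest.
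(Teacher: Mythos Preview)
Your approach is correct and matches the paper's: the paper gives no explicit proof, only the sentence ``From the definition we obtain some of the properties of residual polynomials,'' and you do exactly that, unwinding Definition~\ref{def res poly} segment by segment. The key computations you sketch---that $e=1$ collapses the sum in part~(a), that the shared vertex forces $\RA_{i,\deg\RA_i}=\underline{\rho_{k'}\alpha^{-v_\alpha(\rho_{k'})}}=\RA_{i+1,0}$ in part~(b), and that $\rho_n=1$ with $v_\alpha(\rho_n)=0$ gives part~(c)---are all sound.
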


From now on we consider the residual polynomials of the segments of a ramification polygon.
We recall some of the results of \cite[Section 4]{pauli-sinclair}.

\begin{proposition}\label{prop res pol seg}
Let $f\in\OK[x]$ be Eisenstein of degree $n=p^r e_0$ with $\gcd(p,e_0)=1$, let $\alpha$ be a root of $f$, 
$\rho$ the ramification polynomial, and $\rpol_f$ the ramification polygon of $f$.
\begin{enumerate}
\item
If $e_0\ne 1$ then $\rpol_f$ has a horizontal
segment of length $p^r(e_0-1)$ with residual polynomial 
$\RA=\sum_{i=0}^{n-p^r}\RA_{i}x^i$ where 
$\RA_{i}=\underline{\binom{n}{i}}\ne\underline0$ if and only if  $v\binom{n}{i}=0$.
\item If $(p^{s_k},J_k),\dots,(p^{s_l},J_l)$ are the points on a segment $\spol$ of $\rpol_f$
of slope $-\frac{\slopenum}{\slopeden}$ of $\rpol_f$ then the residual polynomial of $\spol$ is
\[
\RA(x)
=
\sum_{i=k}^l\underline{\rho_{p^{s_i}}\alpha^{-J_i}}\,x^{(p^{s_i}-p^{s_k})/\slopeden}
=
\sum_{i=k}^l\underline{
f_{b_i} {\tbinom{b_i}{p^{s_i}}}\alpha^{-a_i n-n}
}\,x^{(p^{s_i}-p^{s_k})/\slopeden}.
\]
\end{enumerate}
\end{proposition}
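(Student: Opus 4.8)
The plan is to treat the two parts separately, in each case computing the relevant coefficients $\rho_i$ of the ramification polynomial modulo a power of the maximal ideal $(\alpha)$ of $\OL$ and then reading off the residual polynomial from Definition~\ref{def res poly}. For part (1) I would begin from $\rho_i=\sum_{k=i}^n\binom ki f_k\alpha^{k-n}$. Since $f$ is Eisenstein, $v_\PP(f_k)\ge1$ for $0\le k\le n-1$ and $f_n=1$, so for $1\le i\le n$ every term with $k\le n-1$ has $v_\alpha\ge k\ge i$, while the $k=n$ term is $\binom ni\in\OK$; hence $v_\alpha(\rho_i)\ge0$ and $\rho_i\equiv\binom ni\pmod{(\alpha)}$. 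In particular $v_\alpha(\rho_n)=0$; and since $(1+x)^n\equiv(1+x^{p^r})^{e_0}\pmod p$ we get $\binom n{p^r}\equiv e_0\not\equiv0\pmod p$ (as $p\nmid e_0$) while $p\mid\binom ni$ for $1\le i<p^r$, so $v_\alpha(\rho_{p^r})=0$ and $v_\alpha(\rho_i)\ge1$ for $1\le i<p^r$. Thus the minimum value of $v_\alpha(\rho_i)$ is $0$, first attained at $i=p^r$ and last at $i=n$, so the ramification polygon has a horizontal edge spanning $[p^r,n]$, of length $n-p^r=p^r(e_0-1)$ (nonempty since $e_0\ne1$). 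This edge has integer slope $0$ and left endpoint of ordinate $0$, so the integral-slope formula of Lemma~\ref{lem res pol} gives $\RA(x)=\sum_{j=0}^{n-p^r}\underline{\rho_{j+p^r}}\,x^j$; together with $\rho_{j+p^r}\equiv\binom n{j+p^r}\pmod{(\alpha)}$ this exhibits the coefficients of $\RA$ as the residues of the indicated binomial coefficients, each nonzero precisely when its $v_p$ vanishes.

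For part (2) the core step is to decide, for a point $(p^{s_i},J_i)$ of the segment $\spol$ with $J_i=a_in+b_i$, which term of $\rho_{p^{s_i}}=\sum_k\binom k{p^{s_i}}f_k\alpha^{k-n}$ realizes the minimum in \eqref{eq val coeff rho}, which by hypothesis equals $J_i$. Applying Lemma~\ref{lem vf} at this point: the exact equality $v_\PP(f_{b_i})=a_i+1-v_\PP\binom{b_i}{p^{s_i}}$ makes the $k=b_i$ term have $v_\alpha=na_i+b_i=J_i$; the bound $v_\PP(f_k)\ge1+a_i-v_\PP\binom k{p^{s_i}}$ for $b_i<k\le n-1$ makes that term have $v_\alpha\ge na_i+k>J_i$; the bound $v_\PP(f_k)\ge2+a_i-v_\PP\binom k{p^{s_i}}$ for $p^{s_i}\le k<b_i$ makes it have $v_\alpha\ge n(a_i+1)+k>J_i$; and the $k=n$ term has $v_\alpha=nv_\PP\binom n{p^{s_i}}$, a multiple of $n$, hence $\ne J_i$ when $b_i\ne0$ (and $\ge J_i$ automatically, $J_i$ being the minimum). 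Hence the minimum is attained only at $k=b_i$, so $\rho_{p^{s_i}}\equiv\binom{b_i}{p^{s_i}}f_{b_i}\alpha^{b_i-n}$ modulo terms of $v_\alpha>J_i$.

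To finish part (2) I would feed this into Definition~\ref{def res poly}: for $\spol$ with left endpoint $(p^{s_k},J_k)$, length $p^{s_l}-p^{s_k}$ and slope $-\slopenum/\slopeden$ one has $\RA(x)=\sum_j\underline{\rho_{j\slopeden+p^{s_k}}\alpha^{j\slopenum-J_k}}\,x^j$, and a summand vanishes unless the point $(j\slopeden+p^{s_k},v_\alpha(\rho_{j\slopeden+p^{s_k}}))$ lies on $\spol$; using \eqref{eq val coeff rho} and Lemma~\ref{lem vf} one checks this happens exactly at the abscissas $p^{s_i}$ for $k\le i\le l$ (note $\slopeden\mid p^{s_i}-p^{s_k}$ since $\gcd(\slopenum,\slopeden)=1$ and $J_i\in\Z$), where $\alpha^{j\slopenum-J_k}=\alpha^{-J_i}$. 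This gives the first displayed form of $\RA$, and substituting the congruence for $\rho_{p^{s_i}}$ together with $b_i-n-J_i=-(a_i+1)n$ rewrites each coefficient as $\underline{f_{b_i}\binom{b_i}{p^{s_i}}\alpha^{-a_in-n}}$, the second form (these residues lying in $\RK$ since $\KL/\KK$ is totally ramified).

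I expect the main obstacle to be the core step of part (2), namely showing the minimum in \eqref{eq val coeff rho} is attained only at $k=b_i$: this is exactly where the \emph{exact} valuation of $f_{b_i}$ supplied by Lemma~\ref{lem vf} and the congruence-mod-$n$ argument for the $k=n$ term must be combined, and where the degenerate case $b_i=0$ (in which the point lies on the horizontal edge of part (1) and the leading role passes from $k=b_i$ to $k=n$) needs a separate treatment. A secondary technical point is checking that the residual polynomial is supported precisely on the exponents $(p^{s_i}-p^{s_k})/\slopeden$, i.e. that no intermediate lattice point of $\spol$ contributes; this relies on the structural description of ramification polygons recalled in the Remark following Definition~\ref{def ram pol}, again via Lemma~\ref{lem vf}. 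The remaining manipulations are routine bookkeeping with the relation $v_\alpha=n\,v_\PP$ on $\KK$ and with $J_i=a_in+b_i$.
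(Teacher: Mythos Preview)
The paper does not actually prove this proposition: it is quoted from \cite[Section~4]{pauli-sinclair} (see the line ``We recall some of the results of \cite[Section~4]{pauli-sinclair}'' immediately preceding the statement), and no argument is supplied in the present paper. So there is no in-paper proof to compare against.

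That said, your sketch is the standard argument and is correct. For part~(a) the key point is exactly what you wrote: every $k\le n-1$ term in $\rho_i$ has $v_\alpha\ge k\ge 1$, so $\rho_i\equiv\binom{n}{i}\pmod{(\alpha)}$, and Kummer/Lucas on $\binom{n}{i}$ locates the horizontal segment and gives its residual polynomial via Lemma~\ref{lem res pol}\ref{lem res pol int}. For part~(b) your identification of the unique minimizing index $k=b_i$ from the inequalities of Lemma~\ref{lem vf} is precisely the intended mechanism; note that the case $b_i<p^{s_i}$ cannot occur when $b_i\ne0$, since each term in \eqref{eq val coeff rho} is $\equiv k\pmod n$ for some $k\in\{p^{s_i},\dots,n\}$, forcing $b_i\in\{p^{s_i},\dots,n-1,0\}$. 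Your handling of the $k=n$ term via the congruence $J_i\not\equiv 0\pmod n$ is exactly right, and the degenerate case $b_i=0$ you flag is real: there the minimizer becomes $k=n$, and the second displayed formula in the statement should be read with $b_i=n$ (so $f_{b_i}=1$, $\binom{b_i}{p^{s_i}}=\binom{n}{p^{s_i}}$), consistent with part~(a). Finally, the claim that the residual polynomial is supported only at the exponents $(p^{s_i}-p^{s_k})/\slopeden$ is, as you say, the structural fact---implicit in the paper's description of $\rpol$ via the point set $\{(p^{s_i},J_i)\}$ and in Lemma~\ref{lem vf}---that every lattice point of the Newton polygon of $\rho$ lying \emph{on} the polygon has abscissa a power of $p$.
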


This immediately gives:

\begin{corollary}\label{cor res pol seg}
Let $f\in\OK[x]$ be Eisenstein and 
$\rpol_f$ its ramification polygon of $f$.  
\begin{enumerate}
\item The residual polynomial of the rightmost segment of $\rpol$ is monic.
\item
Let $(p^{s_l},J_l)$ 
be the right end point of the $i$-th segment of $\rpol$ 
and $\RA_i=\sum_{j=0}^{m_i} \RA_{i,j}$ its residual polynomial
and let $(p^{s_k},J_k)$ 
be the left end point of the $(i+1)$-st segment of $\rpol$
and $\RA_{i+1}=\sum_{j=0}^{m_{i+1}} \RA_{i+1,j}$ its residual polynomial.
Then $\RA_{i,m_i}=\RA_{i+1,0}$.
\end{enumerate}
\end{corollary}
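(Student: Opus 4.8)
\textbf{Proof plan for Corollary \ref{cor res pol seg}.}

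The plan is to read off both statements directly from Proposition \ref{prop res pol seg}, since as the phrase ``This immediately gives'' suggests, there is no new content here beyond unwinding the formulas. For part (1), I would start from part (1) of Proposition \ref{prop res pol seg} in the special case $e_0=1$, i.e.\ when $\rpol_f$ has no horizontal segment, and from part (2) otherwise; in either case the rightmost segment of $\rpol$ is the one whose right endpoint is $(n,0)$ (for $e_0\ne 1$ this is the horizontal segment, for $e_0=1$ it is the last sloped segment). Write $\rho$ for the ramification polynomial of $f$; by Definition \ref{def ram pol}, $\rho(x)=f(\alpha x+\alpha)/\alpha^n$, and since $f$ is monic of degree $n$, so is $\rho$. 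Now invoke Lemma \ref{lem res pol}(3): the residual polynomial of the rightmost (i.e.\ last, $\ell$-th) segment of the Newton polygon of a monic polynomial is monic. Concretely, in the formula of Proposition \ref{prop res pol seg}(2) the top-degree coefficient of $\RA$ corresponds to $i=l$ with $p^{s_l}=n$, and $\rho_n\alpha^{-J_l}=\rho_n$ reduces to $\underline{1}$ since $\rho$ is monic and $J_l=0$; in the case $e_0\ne1$, the top coefficient in Proposition \ref{prop res pol seg}(1) is $\underline{\binom{n}{n-p^r}}$ — here I should double-check that $v\binom{n}{p^r}=0$, which holds because $n=p^re_0$ with $\gcd(p,e_0)=1$ forces the $p$-adic valuation of $\binom{p^re_0}{p^r}$ to be $0$ by Kummer's theorem (no carries when adding $p^r$ and $p^r(e_0-1)$ in base $p$). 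Either way the leading coefficient is a unit class, and after rescaling to make it monic — or observing it already is — we get assertion (1).

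For part (2) I would simply apply Lemma \ref{lem res pol}(\ref{lem res pol end}) to $\rho$: for consecutive segments $\spol_i,\spol_{i+1}$ of the Newton polygon of $\rho$, the leading coefficient of $\RA_i$ equals the constant coefficient of $\RA_{i+1}$. Translating the indexing: if $(p^{s_l},J_l)$ is the right endpoint of the $i$-th segment, then in the formula of Proposition \ref{prop res pol seg}(2) the leading coefficient of $\RA_i$ is $\underline{\rho_{p^{s_l}}\alpha^{-J_l}}=\underline{f_{b_l}\binom{b_l}{p^{s_l}}\alpha^{-a_ln-n}}$; if $(p^{s_k},J_k)$ is the left endpoint of the $(i+1)$-st segment, then the constant coefficient of $\RA_{i+1}$ (the $i=k$ term) is $\underline{\rho_{p^{s_k}}\alpha^{-J_k}}=\underline{f_{b_k}\binom{b_k}{p^{s_k}}\alpha^{-a_kn-n}}$. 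But $(p^{s_l},J_l)$ and $(p^{s_k},J_k)$ are the same point of $\rpol_f$ — the shared vertex between the two segments — so $s_l=s_k$, $J_l=J_k$, $a_l=a_k$, $b_l=b_k$, and the two expressions are literally equal. Hence $\RA_{i,m_i}=\RA_{i+1,0}$.

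The only genuinely non-routine point, and the thing I would be most careful about, is checking that the ramification polygon has the structural features the two lemmas need: namely that its rightmost segment really is the last Newton-polygon segment of $\rho$ (so that Lemma \ref{lem res pol}(3) applies), and that consecutive segments of $\rpol$ share an honest vertex so the indices $(p^{s_l},J_l)$ and $(p^{s_k},J_k)$ coincide. Both follow from the description of $\rpol_f$ recalled in Figure \ref{fig ram pol} and Lemma \ref{lem vf}, together with the convention (stated in the Remark after Lemma \ref{lem vf}) that we list all marked points, not merely the vertices — one must make sure the ``segment'' of Corollary \ref{cor res pol seg} means a maximal segment between consecutive vertices, so that Lemma \ref{lem res pol}(\ref{lem res pol end}) is being applied to genuinely adjacent Newton-polygon segments. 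Once that bookkeeping is pinned down, both parts are immediate substitutions, which is why the statement is offered as an immediate corollary.
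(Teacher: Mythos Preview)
Your proposal is correct and matches the paper's intent: the corollary is stated with no proof beyond ``This immediately gives,'' and the intended derivation is exactly what you do---apply Lemma \ref{lem res pol}(\ref{lem res pol end}) and Lemma \ref{lem res pol}(c) to the monic ramification polynomial $\rho$, or equivalently read the shared-vertex coefficient off the explicit formula in Proposition \ref{prop res pol seg}(2). One small remark: for part (1) your clean argument via Lemma \ref{lem res pol}(c) (``$\rho$ is monic, hence $\RA_\ell$ is monic'') already settles the matter, so the separate binomial computation in the $e_0\neq 1$ case is unnecessary---and indeed the formula $\RA_i=\underline{\binom{n}{i}}$ in Proposition \ref{prop res pol seg}(1) is indexed so that the leading coefficient comes from $\underline{\rho_n}=\underline{1}$, not from $\underline{\binom{n}{p^r}}$, so there is no rescaling to worry about.
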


We give criteria for the existence of polynomials with given ramification polygon $\rpol$ with given residual polynomials of its segments.

\begin{proposition}\label{prop A}
Let $n=p^r e_0$ with $\gcd(p,e_0)=1$ and let
$\rpol$ be a ramification polygon with points
\[
\rpol = \{ (1,J_0),(p^{s_1},J_1),\dots,(p^{s_k},J_k),\dots,(p^{r},0),\dots,(p^re_0,0) \}.
\]
Write $J_k=a_kn+b_k$ with $0\le b_k\le n$.
Let $\spol_1,\dots,\spol_\ell$ be the segments of $\rpol$ with endpoints
$(p^{k_i},J_{k_i})$ and $(p^{l_i},J_{l_i})$
and slopes $-\slopenum_i/\slopeden_i$ ($1\le i <\ell$).
For $1\le i <\ell$ let
$\RA_i(x)=\sum_{j=0}^{(p^{l_i}-p^{k_i})/{\slopeden_i}}\RA_{i,j}x^j\in\RK$.

There is an Eisenstein polynomial $f$ of degree $p^r e_0$ with ramification polygon $\rpol$
and segments $\spol_1,\dots,\spol_\ell$ with residual polynomials
$\RA_1,\dots,\RA_\ell\in\RK[x]$ if and only if
\begin{enumerate}
\item $\RA_{i,\deg\RA_i}=\RA_{i+1,0}$ for $1\le i< \ell$,
\item $\RA_{i,j}\ne0$ if and only if $j=(q-p^{s_{k_i}})/\slopeden_i$ for some 
$q\in\{p^{s_1},\dots,p^r\}$ with  $p^{k_i}\le q\le p^{l_i}$,
\item if for some $1\le t,q\le u$ we have $b_t=b_q$ and 
$s_{k_i}\le s_t\le s_{l_i}$
and
$s_{k_j}\le s_q\le s_{l_j}$
then
\[
\RA_{i,(p^{s_t}-p^{s_{k_i}})/\slopeden_i}
=
\underline{
{\tbinom{b_t}{p^{s_t}}} {\tbinom{b_t}{p^{s_q}}}^{-1}
(-f_0)^{a_q-a_t}
}
\RA_{j,(p^{s_q}-p^{s_{k_j}})/\slopeden_j}.
\]
\end{enumerate}
\end{proposition}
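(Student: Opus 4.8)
The plan is to derive the three conditions directly from the description of the residual polynomials of segments in Proposition~\ref{prop res pol seg}(2), and then conversely to build an Eisenstein polynomial realizing prescribed residual data by choosing the coefficients $f_{b_t}$ appropriately, invoking Proposition~\ref{prop R iff} to guarantee the ramification polygon is as required. For the forward direction, suppose $f$ is Eisenstein of degree $p^re_0$ with ramification polygon $\rpol$ and the given residual polynomials. Condition~(1) is exactly Corollary~\ref{cor res pol seg}(2), which says consecutive residual polynomials agree at the shared endpoint. Condition~(2) follows from the formula in Proposition~\ref{prop res pol seg}(2): the coefficient of $x^{(p^{s_i}-p^{s_{k_i}})/\slopeden_i}$ in $\RA_i$ is $\underline{\rho_{p^{s_i}}\alpha^{-J_i}}$, which is nonzero precisely when $(p^{s_i},J_i)$ is an actual point of the polygon lying on that segment, i.e.\ when the exponent has the stated form with $q\in\{p^{s_1},\dots,p^r\}$; for exponents not of this form the corresponding $\rho$-coefficient has strictly larger valuation and the residue vanishes. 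Condition~(3) is the key new constraint: using the second equality in Proposition~\ref{prop res pol seg}(2), the relevant coefficient of $\RA_i$ equals $\underline{f_{b_t}\binom{b_t}{p^{s_t}}\alpha^{-a_tn-n}}$ and likewise for $\RA_j$ with $(b_q,s_q,a_q)$. When $b_t=b_q$ the coefficient $f_{b_t}=f_{b_q}$ is literally the \emph{same} coefficient of $f$, so taking the ratio of the two residue expressions eliminates $f_{b_t}$ entirely and leaves $\underline{\binom{b_t}{p^{s_t}}\binom{b_t}{p^{s_q}}^{-1}\alpha^{(a_q-a_t)n}}$; since $\alpha^n$ and $-f_0$ are associates with $v_\PP$-valuation ratio forcing $\underline{\alpha^n}=\underline{-f_0}$ (Eisenstein), this rewrites as $\underline{\binom{b_t}{p^{s_t}}\binom{b_t}{p^{s_q}}^{-1}(-f_0)^{a_q-a_t}}$, giving exactly the stated relation.

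For the converse, I would start from residual polynomials $\RA_1,\dots,\RA_\ell$ satisfying (1)--(3) and construct $f$ coefficient by coefficient. The abscissas $p^{s_1},\dots,p^r$ of the polygon points determine which coefficients $f_{b_t}$ are ``active''; condition~(2) ensures the support of each $\RA_i$ is consistent with the prescribed vertex set of $\rpol$. For each value $b$ appearing as some $b_t$, condition~(3) guarantees that the residue values $\RA_{i,(p^{s_t}-p^{s_{k_i}})/\slopeden_i}$ assigned across all segments containing a point with that second coordinate are mutually compatible with being $\underline{f_b\binom{b}{p^{s_t}}\alpha^{-a_tn-n}}$ for a single choice of $\underline{f_b}$; so we may pick a representative $f_b\in\repsK$ (with the correct $\PP$-adic valuation $L_\rpol(b)$, forced by the nonvanishing of the residue) realizing all of them simultaneously. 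For coefficients $f_i$ with $i\notin B_\rpol$ we set $f_i$ to have valuation $\ge L_\rpol(i)$, e.g.\ $f_i=0$ above the necessary bounds, using Lemma~\ref{lem vf}/Definition~\ref{def ells} to see these choices do not create unwanted polygon points. Then Proposition~\ref{prop R iff} shows the resulting Eisenstein polynomial has ramification polygon exactly $\rpol$, and Proposition~\ref{prop res pol seg}(2) shows its segment residual polynomials are the prescribed $\RA_i$.

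The main obstacle is the bookkeeping in condition~(3) for the converse: one must check that the compatibility relations across \emph{all} pairs $(t,q)$ with $b_t=b_q$ are not over-determined, i.e.\ that they are consistent (a cocycle-type condition) so that a single $f_b$ does exist. This amounts to verifying that the relation in~(3) is transitive and symmetric in the appropriate sense — composing the relation from $(t,q)$ and from $(q,q')$ must yield the relation from $(t,q')$, which reduces to the identity $(-f_0)^{a_q-a_t}(-f_0)^{a_{q'}-a_q}=(-f_0)^{a_{q'}-a_t}$ together with the analogous telescoping of the binomial factors. A secondary technical point is confirming that the valuation constraint $v_\PP(f_{b_t})=L_\rpol(b_t)$ (needed so the polygon vertex at $(p^{s_t},J_t)$ is genuinely present, per Proposition~\ref{prop R iff}(2)) is automatically met by the nonvanishing in condition~(2); this is where the ``if and only if'' in Proposition~\ref{prop res pol seg}(1) and the definition of the residual polynomial pin down the valuation exactly. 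Everything else is a direct translation through the formulas already established, handling the horizontal segment (the $e_0\ne1$ case) separately via Proposition~\ref{prop res pol seg}(1), where the residual polynomial is forced and contributes no free choice.
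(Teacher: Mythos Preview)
The paper does not actually prove this proposition: it is recalled from \cite[Section~4]{pauli-sinclair} along with Proposition~\ref{prop res pol seg}, Corollary~\ref{cor res pol seg}, and Lemma~\ref{lem A fix}, and no argument is supplied here. Your sketch is correct and is essentially the argument one would expect in the cited reference: the forward direction reads conditions~(1)--(3) straight off Corollary~\ref{cor res pol seg}(2) and the explicit formula in Proposition~\ref{prop res pol seg}(2) (together with $\underline{\alpha^n/(-f_0)}=1$ from the Eisenstein relation), and the converse reconstructs the leading $\pi$-adic digit of each $f_{b}$ with $b\in B_\rpol$ from the residual data, with condition~(3) supplying exactly the compatibility needed when several polygon points share the same $b$, after which Proposition~\ref{prop R iff} certifies the polygon and Proposition~\ref{prop res pol seg}(2) certifies the residual polynomials.

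Two small remarks. First, in the converse you must also fix $f_0$ (equivalently $\underline{f_{0,1}}$) at the outset, since it appears in condition~(3); once chosen, Lemma~\ref{lem A fix} is precisely the inversion of Proposition~\ref{prop res pol seg}(2) that hands you $\underline{f}_{b_t,L_\rpol(b_t)}$. Second, your cocycle check is indeed trivial: the relation in~(3) is visibly symmetric and transitive because both the binomial ratio and the power of $-f_0$ telescope, and the valuation bookkeeping (that the scalar in~(3) is a unit) follows from $v_\PP(f_{b_t})=a_t+1-v_\PP\binom{b_t}{p^{s_t}}=a_q+1-v_\PP\binom{b_t}{p^{s_q}}$ via Lemma~\ref{lem vf}, which is already forced by $\rpol$ being a ramification polygon. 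So there is no over-determination.
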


We define the invariant $\invA$ of $\KL/\KK$
that is obtained from the residual polynomials of the 
segments of the ramification polygon of $f$.   
From the proof of \cite[Proposition 4.4]{greve-pauli} we obtain:

\begin{lemma}\label{lem res pol zeros}
Let $f\in\OK[x]$ be Eisenstein and $\alpha$ a root of $f$ and $\KL=\KK(\alpha)$.
Let $\spol$ be a segment of the ramification polygon of $f$ of slope $-\slopenum/\slopeden$
and let $\RA$ be its residual polynomial.
Let $\beta=\delta\alpha$ with $v_\alpha(\delta)=0$ be another uniformizer of $\KL$ 
and $g$ its minimal polynomial.
If $\underline\gamma_1,\dots,\underline\gamma_m$ are the (not necessarily distinct) zeros of $\RA$, then 
$\underline\gamma_1/\underline\delta^h,\dots,\underline\gamma_m/\underline\delta^h$ are the
zeros of the residual polynomial of the segment of slope $-\slopenum/\slopeden$
of the ramification polygon of $g$.
\end{lemma}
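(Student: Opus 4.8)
The plan is to compute directly how the coefficients of the ramification polynomial transform when we replace the uniformizer $\alpha$ by $\beta = \delta\alpha$, and then track the effect on the residual polynomial of a fixed segment. First I would set up notation: let $f$ be the minimal polynomial of $\alpha$ and $g$ the minimal polynomial of $\beta$, with $\rho(x) = f(\alpha x + \alpha)/\alpha^n$ and $\sigma(x) = g(\beta x + \beta)/\beta^n$ the respective ramification polynomials. The key point is that the roots of $g$ are exactly $\{\delta^\tau \sigma(\alpha_j)\}$ where $\alpha_j$ range over the roots of $f$ and $\delta^\tau$ denotes the image of $\delta$ under the corresponding embedding — more precisely, if $\beta = \delta\alpha$ then for each conjugate we have a relation $\beta' = \delta'\alpha'$ with $v_\alpha(\delta') = 0$. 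From this, the roots $\mu$ of $\sigma$ satisfy $\mu = (\alpha'/\alpha)(\delta'/\delta) \cdot(\text{something})$; I would rather argue at the level of Newton polygons and residues, since Lemma \ref{lem res pol} already gives $\RA(x) = \underline{\rho(\alpha^h x)\alpha^{-k - v_\alpha(\rho_k)} x^{n-l}}$ for integral slope, and an analogous formula holds for $g$.

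The cleanest route is probably this. Fix the segment $\spol$ of slope $-h/e$ on the ramification polygon of $f$, with endpoints $(k, v_\alpha(\rho_k))$ and $(k+l, v_\alpha(\rho_{k+l}))$. The zeros $\underline\gamma_1,\dots,\underline\gamma_m$ of $\RA$ are the residues of $\mu/\alpha^{?}$-type quantities: concretely, if $\mu$ is a root of $\rho$ with $v_\alpha(\mu) = h/e$ (the slope of $\spol$, appropriately scaled), then $\underline{\mu/\pi^{?}}$ contributes a zero of $\RA$ after the variable substitution $x \mapsto x^e$. Now the roots of $\rho$ are $(\alpha' - \alpha)/\alpha$ as $\alpha'$ ranges over the non-trivial conjugates of $\alpha$, while the roots of $\sigma$ (ramification polynomial of $g$) are $(\beta' - \beta)/\beta = (\delta'\alpha' - \delta\alpha)/(\delta\alpha)$. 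Writing $\delta' = \delta(1 + (\delta'/\delta - 1))$ and expanding, one finds $(\beta'-\beta)/\beta = (\alpha'-\alpha)/\alpha + (\delta'/\delta - 1)\cdot(\alpha'/\alpha)$. The valuations of the two summands need to be compared: the first has valuation equal to a vertex/slope value of $\rho$, and the second has valuation $v_\alpha(\delta'/\delta - 1) \ge 1$ which is typically larger, so to leading order the root of $\sigma$ on the corresponding segment is $(\delta'/\delta)$ times the root of $\rho$ — wait, more carefully it is the first term, but with a correction; I would check that on each segment the relevant root of $\sigma$ has the same valuation as the corresponding root of $\rho$ and that its residue is $\underline{\delta'/\delta}$ times (or divided by some power) the residue of the root of $\rho$. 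Since $v_\alpha(\delta) = 0$, the residue $\underline{\delta'/\delta} = \underline{\delta}^{(\text{Frobenius power})}/\underline\delta$, and on a totally ramified piece this is just $\underline\delta^{?}/\underline\delta$; chasing the bookkeeping in Definition \ref{def res poly}, the net effect on the degree-$j$ coefficient is multiplication by $\underline\delta^{jh - (\text{base})}$, which on zeros translates to scaling each $\underline\gamma_i$ by $\underline\delta^{-h}$ — matching the claim $\underline\gamma_i/\underline\delta^h$.

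An alternative, and perhaps more robust, approach avoids conjugates entirely and works with the polynomials themselves. We have $g(x) = \prod(x - \beta_j) $ where $\beta_j = \delta_j \alpha_j$; equivalently, one shows $g$ and $f$ are related by the substitution encoding the change of uniformizer, and then $\sigma(x) = g(\beta x + \beta)/\beta^n$ can be rewritten in terms of $\rho$ evaluated at a linear-fractional adjustment of $x$. I would use the formula from Lemma \ref{lem res pol}\eqref{lem res pol int} in the integral-slope case: $\RA^g(x) = \underline{\sigma(\beta^h x)\beta^{-k-v_\beta(\sigma_k)}x^{n-l}}$, substitute $\beta = \delta\alpha$, use that $v_\beta = v_\alpha$ on $\KL$, and compare with $\RA^f(x) = \underline{\rho(\alpha^h x)\alpha^{-k-v_\alpha(\rho_k)}x^{n-l}}$; for non-integral slopes one applies the general Definition \ref{def res poly} and an extension-of-scalars argument (extend $\KK$ so the slope becomes integral, which does not change residual polynomials). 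The upshot should be $\RA^g(x) = \underline{\delta}^{-k} \cdot c \cdot \RA^f(\underline\delta^{h} x)$ up to a unit $c$ that is a power of $\underline\delta$, forcing the zeros to scale by $\underline\delta^{-h}$.

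The main obstacle I anticipate is the bookkeeping of powers of $\underline\delta$: getting the exponent exactly right (why $h$ and not $h\pm k$ or $h/e$) requires carefully matching the normalization in Definition \ref{def res poly} — the shift by $v_\alpha(\rho_k)$ and the division by $e$ in the exponent of $x$ — against how $\delta$ enters each coefficient $\rho_{je+k}$. A secondary subtlety is justifying that the correction term $(\delta'/\delta - 1)(\alpha'/\alpha)$ genuinely has strictly larger valuation than the main term on \emph{every} segment (this uses $v_\alpha(\delta) = 0$ so that $v_\alpha(\delta'/\delta - 1) \ge 1$, together with the fact that the slopes of the ramification polygon, scaled by $n$, are integers in $[1, n\,v_\PP(n)]$, hence the main term's valuation, measured in $v_\alpha$, can be as small as — well, I need the roots of $\rho$ on a segment to have $v_\alpha$ value equal to that segment's height coordinate over its length, and to confirm $1$ does not exceed it; in fact the only place this could fail is the horizontal segment, where the residual polynomial is $\underline{\binom{n}{i}}$-type and unaffected by $\delta$, consistent with $h = 0$ there). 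I would handle the general (non-integral slope) case by the standard reduction to integral slope via a tame base extension, noting residual polynomials are insensitive to such extensions.
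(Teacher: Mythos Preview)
The paper does not give its own proof of this lemma; it simply records that the statement is extracted from the proof of \cite[Proposition~4.4]{greve-pauli}. That argument is indeed root-based, tracking how the roots $(\alpha'-\alpha)/\alpha$ of the ramification polynomial transform under $\alpha\mapsto\delta\alpha$, so your first approach is the right one in spirit.

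However, your sketch misidentifies where the factor $\underline\delta^{\,h}$ comes from. With $\mu=(\alpha'-\alpha)/\alpha$ and $\nu=(\beta'-\beta)/\beta$, your identity gives $\nu=\mu+(\delta'/\delta-1)(\alpha'/\alpha)$, equivalently $\nu=(\delta'/\delta)\mu+(\delta'/\delta-1)$. Since $\KL/\KK$ is totally ramified, every conjugate $\delta'$ satisfies $\underline{\delta'}=\underline{\delta}$, so $\underline{\delta'/\delta}=\underline 1$; once the correction term is shown to have higher valuation you obtain $\underline{\nu/\mu}=\underline 1$, not a nontrivial power of $\underline\delta$. The factor $\underline\delta^{\,h}$ comes entirely from the change of normalising uniformizer: the zeros of $\RA$ for $f$ are the classes $\underline{\mu^{e}/\alpha^{h}}$, while those for $g$ are $\underline{\nu^{e}/\beta^{h}}=\underline{\mu^{e}/(\delta\alpha)^{h}}=\underline\gamma/\underline\delta^{\,h}$. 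Your second, polynomial-level approach catches this via the substitution $\beta=\delta\alpha$, but your first approach loses the thread here (and the Frobenius remark is a red herring for the same reason).

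Your valuation comparison is also too weak as stated. Knowing only $v_{\alpha}(\delta'/\delta-1)\ge 1$ is not enough on segments with $h/e\ge 1$, which do occur. The bound you actually need is $v_{\alpha}(\delta'-\delta)\ge v_{\alpha}(\alpha'-\alpha)=1+v_{\alpha}(\mu)$: expand $\delta=\sum_{k\ge 0}d_{k}\alpha^{k}$ with $d_{k}\in\OK$, so that $\delta'-\delta=\sum_{k\ge 1}d_{k}\bigl((\alpha')^{k}-\alpha^{k}\bigr)$ is divisible by $\alpha'-\alpha$. This yields $v_{\alpha}(\delta'/\delta-1)>v_{\alpha}(\mu)$ on every segment, with no case analysis and no special treatment of the horizontal segment.
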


Thus the zeros of the residual polynomials of all  segments of the ramification polygon change by powers of the same 
element $\underline\delta$ when transitioning from a uniformizer $\alpha$ to a uniformizer $\delta\alpha$.
With Proposition \ref{prop A} we obtain: 
{
\begin{theorem}[{\cite[Theorem 4.8]{pauli-sinclair}}]\label{theo A}
Let $\spol_1,\dots,\spol_\ell$ be the segments of the ramification polygon 
$\rpol$ of an Eisenstein polynomial  $f\in\OK[x]$.
For $1\le i\le \ell$ let $-h_i/e_i$ be the slope of $\spol_i$
and $\RA_i(x)=\sum_{j=0}^{m_i}$ its residual polynomial.
Then 
\begin{equation}\label{eq A}
\invA=\left\{ 
\left(\gamma_{\delta,1}{\RA_1}(\underline\delta^{h_1} x),\dots,
\gamma_{\delta,\ell}{\RA_\ell}(\underline\delta^{h_\ell} x)\right) 
: 
\underline\delta\in\RK^\times
\right\}
\end{equation}
where 
$
\gamma_{\delta,\ell}=\delta^{-h_\ell\deg\RA_\ell},
$
and
$
\gamma_{\delta,i}=\gamma_{\delta,i+1}\delta^{-h_i\deg\RA_i}
$
for $1\le i\le \ell-1$
is an invariant of the extension $\KK[x]/(f)$. 
\end{theorem}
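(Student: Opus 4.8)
The plan is to show that the set $\invA$ defined in \eqref{eq A} depends only on the extension $\KK[x]/(f)$ and not on the particular Eisenstein polynomial $f$ generating it. Since any two Eisenstein polynomials defining the same extension have roots that are uniformizers of that extension, and any uniformizer of $\KL=\KK[x]/(f)$ differs from a fixed root $\alpha$ of $f$ by a unit, it suffices to compare $f$ with the minimal polynomial $g$ of an arbitrary second uniformizer $\beta=\delta\alpha$, $v_\alpha(\delta)=0$. So the first step is to reduce the statement to this comparison: fix $\alpha$ with minimal polynomial $f$, let $\beta=\delta\alpha$ be any other uniformizer with minimal polynomial $g$, and show that the set in \eqref{eq A} computed from the residual polynomials $\RA_i$ of $f$ equals the set computed from the residual polynomials $\RA_i'$ of $g$.

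Next I would apply Lemma \ref{lem res pol zeros}: the $i$-th segment $\spol_i$ has slope $-h_i/e_i$, and if $\underline\gamma_{i,1},\dots,\underline\gamma_{i,m_i}$ are the zeros of $\RA_i$ (with multiplicity), then the zeros of $\RA_i'$ are $\underline\gamma_{i,t}/\underline\delta^{h_i}$. Crucially, the \emph{same} element $\underline\delta$ governs every segment simultaneously. In terms of polynomials this says $\RA_i'$ and $\RA_i(\underline\delta^{h_i}x)$ have the same zero sets with multiplicity, hence agree up to a scalar: $\RA_i'(x) = \mu_i\,\RA_i(\underline\delta^{h_i}x)$ for some $\mu_i\in\RK^\times$. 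The content of the theorem is then to pin down the scalars $\mu_i$ using the normalization conditions built into the residual polynomials. The rightmost residual polynomial $\RA_\ell$ is monic by Corollary \ref{cor res pol seg}(1) (equivalently Lemma \ref{lem res pol}(3)), and likewise $\RA_\ell'$ is monic; comparing leading coefficients forces $\mu_\ell = \underline\delta^{-h_\ell\deg\RA_\ell} = \gamma_{\delta,\ell}$. Then I would induct downward: by Corollary \ref{cor res pol seg}(2) the leading coefficient of $\RA_i$ equals the constant coefficient of $\RA_{i+1}$, and the same gluing holds for the primed polynomials; combining this with $\RA_{i+1}' = \gamma_{\delta,i+1}\RA_{i+1}(\underline\delta^{h_{i+1}}x)$ and $\RA_i' = \mu_i\RA_i(\underline\delta^{h_i}x)$ gives a relation that determines $\mu_i$ in terms of $\mu_{i+1}=\gamma_{\delta,i+1}$, yielding exactly $\mu_i = \gamma_{\delta,i+1}\,\underline\delta^{-h_i\deg\RA_i} = \gamma_{\delta,i}$, matching the recursion in the statement.

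With the scalars identified, the tuple of residual polynomials of $g$ is precisely $\bigl(\gamma_{\delta,1}\RA_1(\underline\delta^{h_1}x),\dots,\gamma_{\delta,\ell}\RA_\ell(\underline\delta^{h_\ell}x)\bigr)$, so it lies in the set $\invA$ attached to $f$. Running the argument with the roles of $f$ and $g$ interchanged (replacing $\underline\delta$ by $\underline\delta^{-1}$) shows the reverse inclusion, hence the two sets coincide and $\invA$ is well defined on the extension. I would close by noting that changing the chosen root of $f$ among its conjugates, or passing to a $\KK$-isomorphic presentation $\KK[x]/(f')$, likewise only replaces $\alpha$ by another uniformizer and is therefore covered by the same computation.

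The main obstacle I anticipate is the bookkeeping in the downward induction: correctly tracking how the "gluing" identity $\RA_{i,m_i}=\RA_{i+1,0}$ interacts with the substitutions $x\mapsto\underline\delta^{h_i}x$ and the scalar factors, so that the product of powers of $\underline\delta$ telescopes into the stated closed form $\gamma_{\delta,i}=\gamma_{\delta,i+1}\delta^{-h_i\deg\RA_i}$. One must also be slightly careful that $\deg\RA_i = (p^{l_i}-p^{k_i})/e_i = m_i$ so that the exponent of $\underline\delta$ picked up from the leading coefficient of $\RA_i(\underline\delta^{h_i}x)$ is indeed $h_i\deg\RA_i$; this is where the precise shape of the segment endpoints from Proposition \ref{prop res pol seg} and Proposition \ref{prop A}(2) enters. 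Everything else — the reduction to a single change of uniformizer, the scalar-multiple consequence of Lemma \ref{lem res pol zeros}, and the symmetry argument for the reverse inclusion — is routine.
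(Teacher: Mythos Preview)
Your proposal is correct and follows essentially the same route the paper indicates: the paper does not give a detailed proof but states the theorem as a consequence of Lemma~\ref{lem res pol zeros} (the sentence ``the zeros of the residual polynomials of all segments \dots\ change by powers of the same element $\underline\delta$'') together with Proposition~\ref{prop A}, and then cites \cite{pauli-sinclair} for the full argument. Your reduction to a change of uniformizer, the use of Lemma~\ref{lem res pol zeros} to obtain $\RA_i'(x)=\mu_i\RA_i(\underline\delta^{h_i}x)$, and the determination of the $\mu_i$ via monicity of $\RA_\ell$ and the gluing identity $\RA_{i,m_i}=\RA_{i+1,0}$ is exactly the intended mechanism; you invoke Corollary~\ref{cor res pol seg} where the paper points to Proposition~\ref{prop A}, but the relevant content (condition~(a) there) is the same gluing condition.
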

}

\begin{example}\label{ex theo A}
Let $f(x)=x^9+6x^3+9x+3$.  
The ramification polygon of $f$ consists of the two segments 
with end points $(1,10),(3,3)$ and $(3,3),(9,0)$
and residual polynomials $1+2x$ and $2+x^3$.
We get
\[
\invA=\{(1+2x,2+x^3),(1+x,1+x^3)\}.
\]
\end{example}

The choice of a representative of 
the invariant $\invA$ determines some of the coefficients of the generating polynomials with this invariant.

\begin{lemma}\label{lem A fix}
Let $f\in\OK[x]$ be Eisenstein of degree $n$. 
Let $\spol$ be a segment of 
ramification polygon
of $f$ with endpoints
$(p^{s_k},a_k n+b_k)$ and
$(p^{s_l},a_l n+b_l)$ and residual polynomial $\RA(x)=\sum_{j=1}^{p^{s_l}-p^{s_k}}\RA_j x^j\in\RK[x]$.
If $(p^{s_i},a_i n+b_i)$
is a point on $\spol$ with $b_i \neq 0$ then
\[
\underline{f}_{b_i,j}=
\RA_{(p^{s_i}-p^{s_k})/\slopeden}
\underline{
{ \tbinom{b_i}{p^{s_i}}}^{-1}(-f_{0,1})^{a_i+1}\pi^{v_\PP\binom{b_i}{p^{s_i}}} }
\]
where $j=a_i+1-v_\PP\binom{b_i}{p^{s_i}}$.
\end{lemma}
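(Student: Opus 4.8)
The plan is to extract the desired formula directly from Proposition \ref{prop res pol seg}(2), which already expresses the residual polynomial of a segment $\spol$ in terms of the coefficients $f_{b_i}$ of the generating polynomial. Indeed, that proposition gives, for the point $(p^{s_i}, J_i) = (p^{s_i}, a_i n + b_i)$ lying on $\spol$,
\[
\RA_{(p^{s_i}-p^{s_k})/\slopeden} = \underline{f_{b_i}\, \tbinom{b_i}{p^{s_i}}\, \alpha^{-a_i n - n}}.
\]
So the entire content of the lemma is to unwind this single coefficient identity: solve for $\underline{f_{b_i}}$, replace $\alpha^{-n}$ by a suitable expression in $f_0$ modulo $\PP$, and then read off which $\pi$-adic coefficient $\underline{f}_{b_i,j}$ of $f_{b_i}$ is being pinned down (namely the one at level $j = v_\PP(f_{b_i}) = a_i + 1 - v_\PP\binom{b_i}{p^{s_i}}$, as dictated by Proposition \ref{prop R iff}(2) together with Lemma \ref{lem vf}).

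First I would record the key normalization: since $f$ is Eisenstein with root $\alpha$, we have $\alpha^n = -(f_{n-1}\alpha^{n-1} + \dots + f_1\alpha + f_0)$, and because $v_\alpha(f_i) \geq n$ for $i \geq 1$ while $v_\alpha(f_0) = n$, the dominant term is $f_0$; thus $\alpha^n \equiv -f_0 \pmod{\PP^{n+1}}$ in $\OL$, equivalently $\alpha^n/(-f_0)$ is a unit congruent to $1$. Writing $f_0 = f_{0,1}\pi + (\text{higher})$ we get $v_\alpha(f_0) = n$ and $\underline{\alpha^n / (-f_{0,1}\pi)} = \underline{1}$ in the residue field, i.e. $\underline{f_0/\pi} = \underline{-\alpha^n/\pi} = -\underline{f_{0,1}}$ — this is where the power $(-f_{0,1})^{a_i+1}$ will come from once we raise to the $(a_i+1)$-st power to account for $\alpha^{-(a_i+1)n}$.

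Next I would solve the coefficient identity. From Proposition \ref{prop res pol seg}(2), multiplying through by $\tbinom{b_i}{p^{s_i}}^{-1}$ (which makes sense mod $\PP$ after extracting the $\pi$-adic valuation $v_\PP\binom{b_i}{p^{s_i}}$ — this is the reason the factor $\pi^{v_\PP\binom{b_i}{p^{s_i}}}$ appears, clearing the part of the binomial that vanishes mod $\PP$) and by $\alpha^{(a_i+1)n}$, we obtain
\[
\underline{f_{b_i} \cdot \pi^{-v_\PP\binom{b_i}{p^{s_i}}}} = \RA_{(p^{s_i}-p^{s_k})/\slopeden} \cdot \underline{\tbinom{b_i}{p^{s_i}}^{-1}\, \alpha^{(a_i+1)n}\, \pi^{v_\PP\binom{b_i}{p^{s_i}}}} \cdot (\text{units}),
\]
and then substitute $\alpha^n \equiv -f_0$, hence $\alpha^{(a_i+1)n} \equiv (-f_0)^{a_i+1} \equiv (-f_{0,1}\pi)^{a_i+1} \pmod{\text{higher}}$. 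Dividing by $\pi^{a_i+1}$ and matching $\pi$-adic levels, the left side $\underline{f_{b_i}\pi^{-v_\PP\binom{b_i}{p^{s_i}}}\pi^{-(a_i+1)}}$ is exactly $\underline{f}_{b_i,\,a_i+1-v_\PP\binom{b_i}{p^{s_i}}}$, which is $\underline{f}_{b_i,j}$ with $j = a_i+1-v_\PP\binom{b_i}{p^{s_i}}$, and what remains on the right is precisely $\RA_{(p^{s_i}-p^{s_k})/\slopeden}\,\underline{\tbinom{b_i}{p^{s_i}}^{-1}(-f_{0,1})^{a_i+1}\pi^{v_\PP\binom{b_i}{p^{s_i}}}}$.

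The main obstacle is purely bookkeeping: keeping careful track of the interplay between the $\alpha$-adic valuation (normalized with $v_\alpha(\alpha)=1$, so $v_\alpha(f_0)=n$) and the $\PP$-adic valuation, and justifying that $\binom{b_i}{p^{s_i}}$ may legitimately be inverted after factoring out $\pi^{v_\PP\binom{b_i}{p^{s_i}}}$ — i.e. that $\binom{b_i}{p^{s_i}}/\pi^{v_\PP\binom{b_i}{p^{s_i}}}$ is a unit in $\OK$, which it is by definition of the valuation. One also needs the fact from Proposition \ref{prop R iff}(2) (via Lemma \ref{lem vf}) that $v_\PP(f_{b_i})$ is \emph{exactly} $a_i+1-v_\PP\binom{b_i}{p^{s_i}}$ when $b_i\neq 0$, which guarantees that the level-$j$ coefficient $\underline{f}_{b_i,j}$ is nonzero and is the one being determined; without this the formula would only assert an equality between a possibly-zero residue and the right-hand side. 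No deeper difficulty is anticipated — this lemma is a direct consequence of Proposition \ref{prop res pol seg}(2) combined with the Eisenstein relation $\alpha^n\equiv -f_0$.
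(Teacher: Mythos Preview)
Your approach is correct and is the natural one: Lemma~\ref{lem A fix} is indeed an immediate unwinding of Proposition~\ref{prop res pol seg}(2) together with the Eisenstein congruence $\alpha^n\equiv -f_0$, and the identification $v_\PP(f_{b_i})=a_i+1-v_\PP\binom{b_i}{p^{s_i}}$ from Lemma~\ref{lem vf}. The paper itself does not supply a proof of this lemma---it is stated among the results recalled from \cite{pauli-sinclair}---so there is no paper argument to compare against; your derivation is the expected one.

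One minor cleanup: your displayed intermediate identity
\[
\underline{f_{b_i}\cdot\pi^{-v_\PP\binom{b_i}{p^{s_i}}}}=\RA_{(p^{s_i}-p^{s_k})/\slopeden}\cdot\underline{\tbinom{b_i}{p^{s_i}}^{-1}\,\alpha^{(a_i+1)n}\,\pi^{v_\PP\binom{b_i}{p^{s_i}}}}\cdot(\text{units})
\]
is not literally well-formed, since neither side has $\alpha$-valuation zero (the left side has valuation $n(a_i+1)-2n\,v_\PP\binom{b_i}{p^{s_i}}$, the right side $n(a_i+1)$), so the residue bars do not apply as written. Your final paragraph gets the bookkeeping right: the clean way is to divide $f_{b_i}\binom{b_i}{p^{s_i}}\alpha^{-(a_i+1)n}$ simultaneously by $\pi^j$, $\pi^{v_\PP\binom{b_i}{p^{s_i}}}$, and $\pi^{-(a_i+1)}$ (whose exponents sum to zero), reduce each factor, and solve. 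That is exactly what you do at the end, so this is only a presentational issue in the middle step.
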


Finally, we construct a set of Eisenstein generating polynomials for extensions with given
degree, discriminant, ramification polygon, and invariant $\invA$.
This set $\Psi_{n,J_0,\rpol,\invA}(c)$ is a subset of $\Psi_{n,J_0,\rpol}(c)$, so its members
have the desired discriminant and ramification polygon, and by setting certain coefficients
of the $\PP$-adic expansion of polynomial coefficients, these polynomials will have
residual polynomials $(\RA_1,\ldots,\RA_\ell)\in\invA$ by construction.

\begin{proposition}\label{prop psi-resseg}
For a ramification polygon $\rpol$ with points $(p^0,J_0),(p^{s_1},J_1),\dots,\,(p^{s_\ell},J_\ell),$
where $J_i = a_in+b_i$, let $B_\rpol$ be the set of non-zero $b_i$.
Let $c > 1 + 2a_0 + \frac{2b_0}{n}$,
and let $\Psi_{n,J_0,\rpol,\invA}(c)$ be the set of all polynomials $\psi(x)=x^n + \sum \psi_i x^i \in \OK[x]$ with
\[
\psi_i \in \left\lbrace \begin{array}{ll}
\repss{1,c}          & \mbox{ if } i = 0\\
\repss{L_\rpol(i),c} & \mbox{ if } i \in B_\rpol \\
\reps{L_\rpol(i),c}  & \mbox{ if } 1\leq i\leq n-1 \mbox{ and } i \notin B_\rpol \\
\end{array}\right.
\]
and where all $\underline{\psi}_{i,L_\rpol(i)}$ for $i\in B_\rpol$ are set by the same choice of $(\RA_1,\ldots,\RA_\ell)\in\invA$
according to Lemma \ref{lem A fix}.
The polynomials in $\Psi_{n,J_0,\rpol,\invA}(c)$ generate totally ramified extensions of $\KK$ of degree $n$, discriminant $\PP^{n+J_0-1}$, ramification polygon $\rpol$, and invariant $\invA$.
\end{proposition}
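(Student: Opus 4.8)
The plan is to verify the three claimed invariants in turn, leaning heavily on the results already assembled. First, since every coefficient constraint defining $\Psi_{n,J_0,\rpol,\invA}(c)$ is identical to the one defining $\Psi_{n,J_0,\rpol}(c)$ — the only difference is that certain $\PP$-adic digits $\underline\psi_{i,L_\rpol(i)}$ for $i\in B_\rpol$ are now pinned to specific values rather than ranging freely — we have $\Psi_{n,J_0,\rpol,\invA}(c)\subseteq\Psi_{n,J_0,\rpol}(c)$. Hence by Proposition \ref{prop psi-rpol} every $\psi\in\Psi_{n,J_0,\rpol,\invA}(c)$ is Eisenstein, has discriminant $\PP^{n+J_0-1}$, and has ramification polygon $\rpol$. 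That disposes of three of the four invariants with essentially no work.

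The substantive step is to show that the residual polynomials of the segments of the ramification polygon of each $\psi\in\Psi_{n,J_0,\rpol,\invA}(c)$ are exactly the chosen representative $(\RA_1,\dots,\RA_\ell)$, and therefore that the invariant $\invA$ of $\KK[x]/(\psi)$ is the prescribed one. By Proposition \ref{prop res pol seg}(2), the residual polynomial of the $i$-th segment of $\rpol_\psi$ has $j$-th coefficient $\underline{f_{b_i}\binom{b_i}{p^{s_i}}\alpha^{-a_in-n}}$, which by the computation in Lemma \ref{lem A fix} depends only on the single digit $\underline\psi_{b_i,\,a_i+1-v_\PP\binom{b_i}{p^{s_i}}}=\underline\psi_{b_i,L_\rpol(b_i)}$ (note $L_\rpol(b_i)=a_i+1-v_\PP\binom{b_i}{p^{s_i}}$ by Definition \ref{def ells} and Lemma \ref{lem vf}, since $b_i\in B_\rpol$ forces this value of $l_{\rpol}(b_i,s_i)$ to be the active one). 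Since those digits are, by construction, set according to Lemma \ref{lem A fix} from the fixed tuple $(\RA_1,\dots,\RA_\ell)$, inverting the formula of Lemma \ref{lem A fix} shows that the residual polynomial of each segment of $\rpol_\psi$ is precisely $\RA_i$. One should check the edge cases: points on a segment with $b_i=0$ contribute a coefficient that is forced by monicity / the gluing condition of Corollary \ref{cor res pol seg} and the constraint in Proposition \ref{prop A}(3) rather than by a free digit, so no inconsistency arises, and the compatibility conditions (1)–(3) of Proposition \ref{prop A} are satisfied because $(\RA_1,\dots,\RA_\ell)$ was drawn from an actual invariant class $\invA$ (via Theorem \ref{theo A}, such a consistent representative exists). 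Finally, $\invA$ as defined in \eqref{eq A} is the orbit of $(\RA_1,\dots,\RA_\ell)$ under the $\underline\delta\in\RK^\times$ action, so producing one representative pins down the invariant; thus each $\psi$ has invariant $\invA$.

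The main obstacle I anticipate is bookkeeping rather than conceptual: one must confirm that Lemma \ref{lem A fix} really is invertible in the needed direction — that specifying $\underline\psi_{b_i,L_\rpol(b_i)}$ determines $\RA_{(p^{s_i}-p^{s_k})/\slopeden}$ and conversely — and that the collection of digits being set (one per nonzero $b_i$, i.e.\ one per $i\in B_\rpol$) is exactly matched, with no digit over-determined by two distinct points sharing the same $b_i$ on different segments. That last point is precisely where Proposition \ref{prop A}(3) enters: when $b_t=b_q$ with $t$ on segment $i$ and $q$ on segment $j$, the two residual-polynomial coefficients are forced to be related by the stated $\binom{b_t}{p^{s_t}}\binom{b_t}{p^{s_q}}^{-1}(-f_0)^{a_q-a_t}$ factor, which is exactly the relation that makes the single digit $\underline\psi_{b_t,\cdot}$ assign consistent values to both — so the construction "set by the same choice of $(\RA_1,\dots,\RA_\ell)$" is well-defined. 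I would state this compatibility explicitly as the crux of the argument and then note that everything else follows by unwinding definitions.
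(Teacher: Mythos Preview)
Your proposal is correct and follows exactly the approach the paper takes: the paper does not supply a formal proof but only the explanatory paragraph immediately preceding the proposition, which notes that $\Psi_{n,J_0,\rpol,\invA}(c)\subseteq\Psi_{n,J_0,\rpol}(c)$ (giving the discriminant and ramification polygon via Proposition~\ref{prop psi-rpol}) and that the residual polynomials come out as $(\RA_1,\dots,\RA_\ell)$ ``by construction'' through Lemma~\ref{lem A fix}. Your write-up is a faithful expansion of that sketch, with the added virtue of flagging the $b_t=b_q$ compatibility issue and pointing to Proposition~\ref{prop A}(3) for its resolution.
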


%%%%%%%%%%%%%%%%%%%%%%%%%%%%%%%%%%%%%%%%%%%%%%%%%%%%%%%%%%%%%%%%%%%%%%%%%%%%%%

\section{An Ultrametric Distance of Polynomials}

For two irreducible polynomials $f,g\in\KK[x]$ of degree $n$, we use an ultrametric
distance defined by Krasner that we will later relate to the distance of the roots
of these two polynomials.

\begin{proposition}\label{prop dfg}
Let $f,g\in\KK[x]$ be two irreducible polynomials of degree $n$.
If $\alpha$ is a root of $f$ and $\beta$ is a root of $g$,
then $d(f,g) = |f(\beta)| = |g(\alpha)|$ defines an ultrametric distance
over the set of irreducible polynomials of degree $n$ in $\KK[x]$.
Additionally, if $\alpha=\alpha_1,\ldots,\alpha_n$ are the roots of $f$, and $\beta$ is one of the roots of $g$ which
is closest to $\alpha$, then
\[
d(f,g) = \prod_{i=1}^n \{ |\beta-\alpha|,|\alpha-\alpha_i| \}.
\]
\end{proposition}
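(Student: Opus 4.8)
The plan is to check the metric and ultrametric axioms together with the product formula, establishing the latter before the strong triangle inequality since that inequality will follow from it almost formally. Throughout I may assume $f$, $g$ (and later $h$) are monic and pairwise distinct: the fields in question are given by monic irreducible polynomials, $d$ is unchanged by scaling by a unit, and all of the claims are trivial when two of the polynomials coincide. Write $\alpha=\alpha_1,\dots,\alpha_n$ for the roots of $f$ and $\beta_1,\dots,\beta_n$ for those of $g$ in $\clK$, and use repeatedly that $\Gal(\clK/\KK)$ preserves $|\cdot|$ and acts transitively on $\{\alpha_i\}$ and on $\{\beta_j\}$.

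\emph{Well-definedness, symmetry, positivity.} Since $g\in\KK[x]$ and $|\cdot|$ is Galois-invariant, $|g(\alpha_i)|$ is independent of $i$, and likewise $|f(\beta_j)|$ is independent of $j$. As $f,g$ are monic, $f(x)=\prod_i(x-\alpha_i)$ and $g(x)=\prod_j(x-\beta_j)$, so $\prod_i g(\alpha_i)=\prod_{i,j}(\alpha_i-\beta_j)=\pm\prod_j f(\beta_j)$; taking absolute values and $n$-th roots gives $|g(\alpha)|=|f(\beta)|$. Hence $d(f,g)$ is well defined independently of the chosen roots and is symmetric. It is non-negative, $d(f,f)=|f(\alpha)|=0$, and $d(f,g)=0$ forces $\alpha$ to be a root of $g$, hence $f=g$, since two monic irreducibles of degree $n$ sharing a root are equal.

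\emph{Product formula.} Let $\beta$ be a root of $g$ closest to $\alpha$ and set $r=|\alpha-\beta|$. By Galois transitivity on the roots of $f$, the quantity $\min_j|\alpha_i-\beta_j|$ is the same for every $i$, so it equals $r$. Fix $i$. The ultrametric inequality gives $|\beta-\alpha_i|\le\max\{r,|\alpha-\alpha_i|\}$, with equality whenever $r\ne|\alpha-\alpha_i|$. When $r=|\alpha-\alpha_i|$ one must still exclude $|\beta-\alpha_i|<r$: were that so, $\beta$ would be a root of $g$ strictly closer to $\alpha_i$ than $r$, contradicting the previous sentence; hence $|\beta-\alpha_i|=r=\max\{r,|\alpha-\alpha_i|\}$ here too. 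Multiplying over $i$,
\[
d(f,g)=|f(\beta)|=\prod_{i=1}^n|\beta-\alpha_i|=\prod_{i=1}^n\max\{\,|\beta-\alpha|,\,|\alpha-\alpha_i|\,\},
\]
the asserted identity. I expect this to be the step needing the most care, specifically the sub-case $r=|\alpha-\alpha_i|$: without the Galois-orbit observation there is no reason the product formula would hold on the nose.

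\emph{Strong triangle inequality.} Let $h$ be monic irreducible of degree $n$ with roots $\gamma_1,\dots,\gamma_n$, and put $s=\min_l|\alpha-\gamma_l|$. If $s>r$, then every root $\gamma_l$ of $h$ satisfies $|\alpha-\gamma_l|>r=|\alpha-\beta|$, so $|\beta-\gamma_l|=|\alpha-\gamma_l|$ by the ultrametric inequality, and therefore $d(f,h)=\prod_l|\alpha-\gamma_l|=\prod_l|\beta-\gamma_l|=|h(\beta)|=d(g,h)$. If instead $s\le r$, then applying the product formula to the pair $(f,h)$ — with a root of $h$ closest to $\alpha$ in place of $\beta$ — gives $d(f,h)=\prod_i\max\{s,|\alpha-\alpha_i|\}\le\prod_i\max\{r,|\alpha-\alpha_i|\}=d(f,g)$. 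In either case $d(f,h)\le\max\{d(f,g),d(g,h)\}$, completing the verification that $d$ is an ultrametric distance.
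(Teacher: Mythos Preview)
Your argument is correct. Note, however, that the paper does not supply its own proof of this proposition: it is quoted as a result of Krasner and stated without demonstration, so there is no ``paper's proof'' to compare against. Your write-up stands on its own.

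A couple of remarks on your execution. The key step is indeed the product formula, and your handling of the delicate case $r=|\alpha-\alpha_i|$ is exactly right: the Galois-orbit observation that $\min_j|\alpha_i-\beta_j|=r$ for \emph{every} $i$ is what rules out $|\beta-\alpha_i|<r$. Without it one would only get an inequality. Your derivation of the ultrametric inequality from the product formula is clean; the case split on $s\gtrless r$ is the natural one, and in the case $s>r$ you actually prove the stronger statement $d(f,h)=d(g,h)$, which is consistent with $d$ being ultrametric (the isoceles property). The only cosmetic point is that the displayed formula in the statement writes $\{\,\cdot\,,\,\cdot\,\}$ where $\max\{\,\cdot\,,\,\cdot\,\}$ is meant; you interpreted this correctly.
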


We can calculate the distance $d(f,g)$ easily using the following lemma.

\begin{lemma}[\cite{pauli-roblot},Lemma 4.2]\label{lem dfg}
Using the same notation as Proposition \ref{prop dfg},
write $f(x) = x^n + f_{n-1}x^{n-1}+\cdots+f_0$ and $g(x) = x^n + g_{n-1}x^{n-1}+\cdots+g_0$, and set
\[
w = \min_{0\leq i\leq n-1} \left\{ v_\PP(g_i-f_i)+\frac{i}{n}\right\}.
\]
Then $d(f,g)=|\pi|^w$.
\end{lemma}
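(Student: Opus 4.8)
The plan is to compute $|g(\alpha)|$ directly from the coefficientwise valuations, using that $\alpha$ is a root of the Eisenstein-type polynomial $f$ so that $v_\PP(\alpha)=1/n$ and $f(\alpha)=0$. First I would write
\[
g(\alpha) = g(\alpha) - f(\alpha) = \sum_{i=0}^{n-1} (g_i - f_i)\,\alpha^i,
\]
since the leading terms $\alpha^n$ cancel. Applying the ultrametric inequality to this finite sum gives
\[
v_\PP\big(g(\alpha)\big) \;\ge\; \min_{0\le i\le n-1}\Big\{ v_\PP(g_i-f_i) + \tfrac{i}{n}\Big\} \;=\; w,
\]
so $d(f,g) = |g(\alpha)| \le |\pi|^w$. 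The bulk of the work is the reverse inequality, i.e.\ showing the minimum is actually attained and no cancellation occurs.

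For the matching lower bound I would argue that the valuations $v_\PP(g_i - f_i) + i/n$ are, for distinct indices $i$, incongruent modulo $\Z$ \emph{unless} the corresponding valuations $v_\PP(g_i-f_i)$ differ, because $i/n$ and $i'/n$ differ by a non-integer when $i\not\equiv i'\pmod n$, which holds for all $0\le i<i'\le n-1$. Hence the terms $(g_i-f_i)\alpha^i$ whose valuation equals the minimum $w$: if there is a unique such index, the valuation of the sum is exactly $w$; if several indices tie, they all have the same fractional part $i/n \bmod \Z$, forcing $i \equiv i' \pmod n$ with $0\le i,i'\le n-1$, hence $i=i'$ — so in fact the minimizing index is unique. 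Therefore $v_\PP(g(\alpha)) = w$ exactly, giving $d(f,g) = |g(\alpha)| = |\pi|^w$. Finally I would note the expression for $w$ is symmetric in $f$ and $g$ (it only involves $g_i - f_i$ up to sign), which is consistent with $d(f,g)=|f(\beta)|$ from Proposition~\ref{prop dfg}; alternatively one reruns the same computation with the roles of $f,g,\alpha,\beta$ swapped.

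The main subtlety — really the only place care is needed — is the claim that the minimizing index is unique, which I would phrase as: distinct $i,i'$ in $\{0,\dots,n-1\}$ have $i/n \not\equiv i'/n \pmod{\Z}$, so two terms of equal total valuation would have to have $v_\PP(g_i-f_i) + i/n = v_\PP(g_{i'}-f_{i'}) + i'/n$ with both sides' fractional parts equal, which is impossible; hence no ultrametric cancellation can reduce the valuation below $w$. One should also record the degenerate cases: if $f=g$ then every difference is $0$, $w=+\infty$, and $d(f,g)=0$, matching the convention $|0|=0$; and the case where some $g_i-f_i=0$ is harmless since such indices simply do not enter the minimum. With uniqueness of the minimizer in hand, the equality $d(f,g)=|\pi|^w$ follows immediately from the two inequalities above.
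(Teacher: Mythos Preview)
The paper does not supply its own proof of this lemma; it is quoted directly from \cite{pauli-roblot} (Lemma~4.2) and used as a black box. So there is no in-paper argument to compare against.

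Your argument is correct and is exactly the standard one. Since $f$ is Eisenstein, $v_\PP(\alpha)=1/n$, and because the coefficients $g_i-f_i$ lie in $\KK$ their valuations are integers; hence the nonzero terms $(g_i-f_i)\alpha^i$ for $0\le i\le n-1$ have valuations lying in pairwise distinct cosets of $\Z$ in $\tfrac{1}{n}\Z$. Thus the minimum is attained at a unique index and the ultrametric inequality is an equality, giving $v_\PP(g(\alpha))=w$. Two small remarks: first, you are implicitly using that $f$ (and $g$) are Eisenstein, not merely irreducible as in Proposition~\ref{prop dfg}; this is the only setting in which the formula is used in the paper, so it is harmless, but it should be stated. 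Second, your phrase ``no ultrametric cancellation can reduce the valuation below $w$'' has the direction backwards---cancellation could only raise the valuation above $w$; what you actually show is that it cannot do so.
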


%%%%%%%%%%%%%%%%%%%%%%%%%%%%%%%%%%%%%%%%%%%%%%%%%%%%%%%%%%%%%%%%%%%%%%%%%%%%%%

\section{A Generalization of Krasner's Mass Formula}

We generalize the proof of Krasner's mass formula to include cases where additional invariants beyond the discriminant may be fixed.
Let $\invset$ be a set of invariants of a totally ramified extension over $\KK$ minimally containing a degree $n$ and discriminant $\PP^{n+J_0-1}$.

Let $\mathbf{K}_\invset$ denote the set of totally ramified extensions over $\KK$ with invariants $\invset$
and $\mathbf{E}_\invset$ denote the set of Eisenstein polynomials in $\KK[x]$ generating extensions with invariants $\invset$.
The roots of the polynomials in $\mathbf{E}_\invset$ generate all extensions in $\mathbf{K}_\invset$.
Let $c > 1+(2J_0)/n$ and $\Psi_\invset(c)$ be the set of all Eisenstein polynomials with coefficients in $R_{1,c}$ whose roots generate totally ramified extensions with invariants $\invset$.

\begin{theorem}[Krasner]\label{thm discs-D}
The set $\mathbf{E}_{n,J_0}$ of Eisenstein polynomials of degree $n$ and discriminant $\PP^{n+J_0-1}$ over $\KK$ is the disjoint union of the closed discs $D_{\mathbf{E}_{n,J_0}}(\psi,r)$ with centers $\psi\in\Psi_{n,J_0}(c)$ and radius $r=|\mathfrak{p}^c|$.
\end{theorem}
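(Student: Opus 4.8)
The plan is to establish the theorem in three movements: first that each disc $D_{\mathbf{E}_{n,J_0}}(\psi,r)$ actually lies inside $\mathbf{E}_{n,J_0}$; second that the discs are pairwise disjoint; and third that they cover $\mathbf{E}_{n,J_0}$. Throughout, the ultrametric distance $d$ from Proposition~\ref{prop dfg}, together with its explicit computation in Lemma~\ref{lem dfg}, is the main tool: for Eisenstein $f,g$ of degree $n$ we have $d(f,g)=|\pi|^w$ with $w=\min_{0\le i\le n-1}\{v_\PP(g_i-f_i)+i/n\}$.

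For the first movement, fix $\psi\in\Psi_{n,J_0}(c)$ and let $f$ lie in the closed disc of radius $r=|\mathfrak p^c|$ around $\psi$. Then $d(f,\psi)\le|\pi|^c$, so $v_\PP(f_i-\psi_i)+i/n\ge c$ for all $i$, i.e. $v_\PP(f_i-\psi_i)\ge c-i/n>1+2a_0+2b_0/n - i/n \ge l(i)$ by the choice of $c$ and the definition of $l(i)$ in Lemma~\ref{lem ell} (the inequality $c - i/n > l(i)$ needs a short case check against the two branches of $l(i)$, using $i<n$). Hence $v_\PP(f_i)\ge\min\{v_\PP(\psi_i),v_\PP(f_i-\psi_i)\}\ge l(i)$, and in the critical case $i=b_0\ne 0$ we get $v_\PP(f_{b_0})=v_\PP(\psi_{b_0})=l(b_0)$ since the perturbation has strictly larger valuation. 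By Lemma~\ref{lem ell}, $f$ is Eisenstein of discriminant $\PP^{n+J_0-1}$, so $f\in\mathbf{E}_{n,J_0}$. (One should also note $f$ is irreducible, being Eisenstein, so $d$ is defined on it.)

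For disjointness, suppose $f$ lies in the discs around two centers $\psi,\psi'\in\Psi_{n,J_0}(c)$. Then $d(\psi,\psi')\le\max\{d(\psi,f),d(f,\psi')\}\le|\pi|^c$ by the ultrametric inequality, so $v_\PP(\psi_i-\psi'_i)\ge c-i/n$ for every $i$. But $\psi_i,\psi'_i$ are representatives in $\reps{l(i),c}$ (or $\repss{l(i),c}$), hence determined modulo $\PP^c$ by their classes in $\PP^{l(i)}/\PP^c$; since $c-i/n>c-1$ forces $v_\PP(\psi_i-\psi'_i)\ge c$ (valuations being integers and $i<n$), the two representatives coincide, so $\psi=\psi'$. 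For the covering, take any $f\in\mathbf{E}_{n,J_0}$; then by Lemma~\ref{lem krasner bound} $f$ generates the same extension as some Eisenstein $g$ with coefficients in $\reps{0,c}$, and Lemma~\ref{lem ell} forces $g$ into the shape cut out by $l(i)$ — so reducing each $g_i$ modulo $\PP^c$ to its representative in the appropriate $\reps{l(i),c}$ or $\repss{l(i),c}$ produces a $\psi\in\Psi_{n,J_0}(c)$ with $v_\PP(g_i-\psi_i)\ge c$, whence $d(g,\psi)\le|\pi|^c=r$. It remains to pass from $g$ back to $f$: since $f$ and $g$ define the same field, one argues $d(f,g)\le r$ as well, and then $d(f,\psi)\le r$ by the ultrametric inequality, placing $f$ in the disc around $\psi$.

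The main obstacle is precisely that last step — moving from $g$ (the "normalized" generator guaranteed by Krasner's bound) back to the arbitrary $f\in\mathbf{E}_{n,J_0}$. This is where Krasner's lemma genuinely enters: one must show that if $f$ and $g$ generate the same totally ramified degree-$n$ extension $\KL/\KK$, then $d(f,g)$ is small, which requires controlling how close the roots of $f$ and $g$ are via Proposition~\ref{prop dfg}'s formula $d(f,g)=\prod_i\{|\beta-\alpha|,|\alpha-\alpha_i|\}$ and comparing against $v_\PP(\disc f)=n+J_0-1$. In fact this circularity is acknowledged in the text — Lemma~\ref{lem krasner bound} is stated to follow \emph{from} this theorem — so the honest structure is: prove the disc around each $\psi$ lies in $\mathbf{E}_{n,J_0}$ and the discs are disjoint (movements one and two, which are self-contained), then prove that the discs centered at $\Psi_{n,J_0}(c)$ together with the field-isomorphism relation exhaust $\mathbf{E}_{n,J_0}$ by a counting/covering argument using Krasner's lemma directly on roots rather than quoting Lemma~\ref{lem krasner bound}. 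Everything else is the routine valuation bookkeeping sketched above.
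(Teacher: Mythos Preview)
The paper does not actually prove Theorem~\ref{thm discs-D}: it is stated as a classical result attributed to Krasner (with no proof given), and Corollary~\ref{cor discs-D} and then Lemma~\ref{lem krasner bound} are recorded as consequences. So there is no argument in the paper to compare against directly; I can only assess your attempt on its own merits.

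Your disjointness argument (movement two) is fine. Your first movement is superfluous: the discs $D_{\mathbf{E}_{n,J_0}}(\psi,r)$ are, by the notation, closed balls in the metric space $\mathbf{E}_{n,J_0}$, hence subsets of it by definition; Proposition~\ref{prop psi-disc} already tells you the centers $\psi$ lie in $\mathbf{E}_{n,J_0}$.

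The genuine gap is in the covering step. You route through Lemma~\ref{lem krasner bound} to replace an arbitrary $f\in\mathbf{E}_{n,J_0}$ by some $g$ with truncated coefficients generating the same field, then struggle to return from $g$ to $f$, and finally diagnose the circularity and propose to repair it by invoking Krasner's lemma ``directly on roots''. But none of this is needed: the covering is purely combinatorial. Given $f\in\mathbf{E}_{n,J_0}$, Lemma~\ref{lem ell} gives $v_\PP(f_i)\ge l(i)$ (with equality at $i=0$ and at $i=b_0$ when $b_0\ne0$), so each $f_i$ already lies in $\PP^{l(i)}$; let $\psi_i$ be the unique representative of $f_i+\PP^c$ in $\reps{l(i),c}$ (respectively in $\repss{1,c}$, $\repss{l(b_0),c}$). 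Then $\psi\in\Psi_{n,J_0}(c)$ by construction, and $v_\PP(f_i-\psi_i)\ge c$ for every $i$ gives $d(f,\psi)\le|\pi|^c=r$ immediately from Lemma~\ref{lem dfg}. No appeal to Krasner's lemma, no auxiliary $g$, no field-isomorphism argument. The theorem is a statement about coefficient representatives modulo $\PP^c$; the analytic content (Krasner's lemma) enters only downstream, in Corollary~\ref{cor discs-D}.
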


\begin{corollary}\label{cor discs-D}
Let $f$ be an Eisenstein polynomial of degree $n$ and discriminant $\mathfrak{p}^{n+J_0-1}$ over $\KK$
and write $f(x) = x^n + f_{n-1}x^{n-1}+\cdots+f_0$.
Let $g(x) = x^n + g_{n-1}x^{n-1}+\cdots+g_0$ be a polynomial such that $g_i \equiv f_i\mod\mathfrak{p}^c$.
Let $\alpha$ be a root of $f$ and $\beta$ a root of $g$ such that $|\beta-\alpha|$ is minimal.
Then $\alpha\in\KK(\beta)$.
\end{corollary}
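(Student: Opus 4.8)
The plan is to reduce the statement to Krasner's Lemma: it will suffice to show that $|\beta-\alpha|<|\alpha-\alpha_i|$ for every root $\alpha_i\ne\alpha$ of $f$, since then $\KK(\alpha)\subseteq\KK(\beta)$. (If $\beta=\alpha$ there is nothing to prove, so I may assume $g(\alpha)\ne 0$.) This is essentially the Krasner-type estimate underlying Theorem~\ref{thm discs-D}, recorded here for a single pair of polynomials. As a preliminary I would note that $g$ is again Eisenstein with $\disc g=\PP^{n+J_0-1}$: since $c\in\Z$ with $c>1+2a_0+\tfrac{2b_0}{n}$ one has $c\ge 2+2a_0$, while the bounds of Lemma~\ref{lem ell} satisfy $l(i)\le 2+a_0\le c$ for $1\le i\le n-1$ and $l(b_0)\le 1+a_0<c$; hence the congruences $g_i\equiv f_i\pmod{\PP^c}$ preserve each inequality $v_\PP(f_i)\ge l(i)$ and the equality $v_\PP(f_{b_0})=l(b_0)$, and in particular $v_\PP(g_i)\ge 1$ for $i<n$ while $v_\PP(g_0)=1$. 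So $g$ is Eisenstein, hence irreducible, $\KK(\beta)/\KK$ is totally ramified of degree $n$ with $\beta$ a uniformizer, and by Lemma~\ref{lem ell} $\disc g=\PP^{n+J_0-1}$, whence $v_\PP(g'(\beta))=\tfrac1n v_\PP(\disc g)=\tfrac{n+J_0-1}{n}$.

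Next I would bound $v_\PP(\beta-\alpha)$ from below. Because $f(\alpha)=0$ the degree-$n$ terms cancel, giving $g(\alpha)=\sum_{i=0}^{n-1}(g_i-f_i)\alpha^i$ and hence $v_\PP(g(\alpha))\ge\min_{i}\bigl(v_\PP(g_i-f_i)+\tfrac{i}{n}\bigr)\ge c$. Let $\beta=\beta_1,\dots,\beta_n$ be the roots of $g$; since $\beta$ is a root of $g$ closest to $\alpha$, for every $j$ the ultrametric inequality forces $v_\PP(\alpha-\beta_j)=\min\bigl(v_\PP(\alpha-\beta),v_\PP(\beta-\beta_j)\bigr)$. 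Summing over $j$,
\[
c\le v_\PP(g(\alpha))=\sum_{j=1}^n v_\PP(\alpha-\beta_j)\le v_\PP(\alpha-\beta)+\sum_{j\ge 2}v_\PP(\beta-\beta_j)=v_\PP(\alpha-\beta)+v_\PP(g'(\beta)),
\]
so $v_\PP(\beta-\alpha)\ge c-\tfrac{n+J_0-1}{n}>\tfrac{J_0+1}{n}$, the last step being exactly the hypothesis $c>1+\tfrac{2J_0}{n}$.

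Finally I would bound $v_\PP(\alpha-\alpha_i)$ from above. The nonzero roots of the ramification polynomial $\rho$ are the $(\alpha_i-\alpha)/\alpha$, and their $v_\alpha$-valuations are the negatives of the slopes of $\rpol_f$; the leftmost segment runs from $(1,J_0)$ to $(p^{s_1},J_1)$, so every slope has absolute value at most $(J_0-J_1)/(p^{s_1}-1)\le J_0$, giving $v_\PP(\alpha-\alpha_i)=\tfrac1n\bigl(1+v_\alpha((\alpha_i-\alpha)/\alpha)\bigr)\le\tfrac{1+J_0}{n}<v_\PP(\beta-\alpha)$ for all $i\ne 1$. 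Krasner's Lemma then yields $\alpha\in\KK(\beta)$. The one substantive point is the lower bound on $v_\PP(\beta-\alpha)$: the crude estimate $v_\PP(g(\alpha))\le n\,v_\PP(\alpha-\beta)$ is too weak, and what makes it work is using the minimality of $|\beta-\alpha|$ to split off a single factor and absorb the remaining $n-1$ into $v_\PP(g'(\beta))$, which the discriminant pins down exactly; everything else is bookkeeping with the ramification polygon and with the congruence conditions.
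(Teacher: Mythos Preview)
Your proof is correct. The paper states Corollary~\ref{cor discs-D} without proof, treating it as an immediate consequence of Krasner's Theorem~\ref{thm discs-D} (itself quoted from \cite{krasner,pauli-roblot}); your argument is precisely the classical Krasner computation that underlies that theorem, so you are supplying the details the paper omits.

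One small remark on packaging. You bound $v_\PP(\beta-\alpha)$ by factoring $g(\alpha)=\prod_j(\alpha-\beta_j)$ and using $g'(\beta)$, which is why you first need the preliminary that $g$ is again Eisenstein of discriminant $\PP^{n+J_0-1}$. The paper's toolkit would reach the same bound more directly: Proposition~\ref{prop dfg} gives $d(f,g)=\prod_i\max\{|\beta-\alpha|,|\alpha-\alpha_i|\}\ge|\beta-\alpha|\cdot|f'(\alpha)|$, and Lemma~\ref{lem dfg} gives $d(f,g)\le|\pi|^c$, so $v_\PP(\beta-\alpha)\ge c-(n+J_0-1)/n$ using $f'(\alpha)$ rather than $g'(\beta)$, with no need to analyze $g$ separately. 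Your route is equally valid, just slightly longer. The upper bound $v_\PP(\alpha-\alpha_i)\le(J_0+1)/n$ via the leftmost slope of the ramification polygon is exactly the right ingredient and is what any proof must use to invoke Krasner's Lemma.
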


Krasner's bound (Lemma \ref{lem krasner bound}) is a direct consequence of the previous corollary.

\begin{corollary} The set $\mathbf{E}_\invset$ is the disjoint union of the closed discs $D_{\mathbf{E}_\invset}(\psi,r)$ with centers $\psi\in\Psi_\invset(c)$ and radius $r=|\mathfrak{p}^c|$.
\end{corollary}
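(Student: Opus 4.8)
The plan is to deduce this statement from Theorem~\ref{thm discs-D} together with the observation that each refinement $\Psi_\invset(c)\subseteq\Psi_{n,J_0}(c)$ respects the disc decomposition. First I would note that, since $\invset$ minimally contains the degree $n$ and discriminant $\PP^{n+J_0-1}$, every polynomial in $\mathbf E_\invset$ is in particular an Eisenstein polynomial of degree $n$ with discriminant $\PP^{n+J_0-1}$, so $\mathbf E_\invset\subseteq\mathbf E_{n,J_0}$ and $\Psi_\invset(c)\subseteq\Psi_{n,J_0}(c)$. By Theorem~\ref{thm discs-D}, $\mathbf E_{n,J_0}$ is the disjoint union of the discs $D(\psi,r)$, $\psi\in\Psi_{n,J_0}(c)$, $r=|\PP^c|$; it therefore suffices to show that for each $\psi\in\Psi_\invset(c)$ the whole disc $D(\psi,r)$ lies in $\mathbf E_\invset$, and that no disc centered at a point of $\Psi_{n,J_0}(c)\setminus\Psi_\invset(c)$ meets $\mathbf E_\invset$; disjointness is then inherited from the larger decomposition.

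The key step is the ``disc stability'' claim: if $\psi$ generates an extension with invariants $\invset$ and $g\equiv\psi \pmod{\PP^c}$ is Eisenstein, then $g$ also generates an extension with invariants $\invset$. For the discriminant part this is Theorem~\ref{thm discs-D}. For the remaining invariants in $\invset$ — the ramification polygon $\rpol$ and, where applicable, the invariant $\invA$ — I would argue that $c>1+2a_0+\tfrac{2b_0}{n}$ is large enough that congruence mod $\PP^c$ forces equality of exactly those $\PP$-adic coefficients of $g_i$ that determine $\rpol$ and $\invA$. Concretely, by Lemma~\ref{lem vf}/Definition~\ref{def ells} the ramification polygon is read off from $v_\PP(f_i)$ and the leading terms $\underline f_{b_t,L_\rpol(b_t)}$; each of these involves only coefficients $f_{i,j}$ with $j$ bounded in terms of $a_0,b_0,n$, hence $j<c$, so these data are preserved under $g\equiv\psi\pmod{\PP^c}$. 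By Proposition~\ref{prop R iff} this pins down $\rpol_g=\rpol$, and by Lemma~\ref{lem A fix} (read in reverse) the residual polynomials, hence $\invA$, are likewise determined by coefficients $f_{b_i,j}$ with $j=a_i+1-v_\PP\binom{b_i}{p^{s_i}}<c$; invoking Corollary~\ref{cor discs-D} one sees $\KK[x]/(g)\cong\KK(\beta)\subseteq\KK(\alpha)$ and by degrees the two fields coincide, so $\invA$ is genuinely an invariant of the same extension. This gives $D(\psi,r)\subseteq\mathbf E_\invset$. Conversely, if $D(\psi,r)\cap\mathbf E_\invset\neq\emptyset$, pick $g$ in the intersection; then $g$ has invariants $\invset$, and applying the stability claim to $g$ shows every element of $D(\psi,r)=D(g,r)$ — in particular $\psi$ — has invariants $\invset$, so $\psi\in\Psi_\invset(c)$.

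Putting these together: $\mathbf E_\invset=\mathbf E_{n,J_0}\cap(\text{polynomials with invariants }\invset)$ is the union of those discs from the Theorem~\ref{thm discs-D} decomposition whose centers lie in $\Psi_\invset(c)$, and this union is disjoint because it is a subfamily of a disjoint family. The main obstacle I anticipate is the bookkeeping in the stability claim — verifying that the cutoff $c>1+2a_0+\tfrac{2b_0}{n}$ dominates every exponent $j$ appearing in the formulas of Definition~\ref{def ells} and Lemma~\ref{lem A fix}, uniformly over all relevant $i$ and all segments. This is essentially the same estimate that underlies Propositions~\ref{prop psi-rpol} and~\ref{prop psi-resseg}, so I would cite those (and the bound $L_\rpol(i)\le 1+a_0+\tfrac{b_0}{n}$-type inequalities implicit there) rather than re-derive the arithmetic; the rest of the argument is a formal consequence of Theorem~\ref{thm discs-D} and Corollary~\ref{cor discs-D}.
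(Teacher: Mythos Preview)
Your proposal is correct but takes a more laborious route than the paper. The paper's proof dispenses entirely with the coefficient-level bookkeeping you outline for the ``disc stability'' claim: it simply invokes Corollary~\ref{cor discs-D} to conclude that any $g\equiv f\pmod{\PP^c}$ generates the \emph{same} extension $\KK(\alpha)=\KK(\beta)$ as $f$, and since every element of $\invset$ is by hypothesis an invariant of the extension (not merely of the polynomial), $g$ automatically inherits all of them. You actually state this field-equality observation yourself near the end of your argument for $\invA$, but you do not exploit it to its full extent; once you have $\KK(\alpha)=\KK(\beta)$, the verification that $\rpol_g=\rpol_f$ via Definition~\ref{def ells} and Proposition~\ref{prop R iff}, and the estimate that $c$ dominates all relevant exponents, become unnecessary. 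The paper's approach is both shorter and more general---it works uniformly for any collection $\invset$ of extension invariants, whereas your argument is tailored to the specific invariants $\rpol$ and $\invA$ and would need to be redone for each new invariant added to $\invset$.
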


\begin{proof}
If $\invset$ only contains a degree and discriminant, then this is Theorem \ref{thm discs-D}.

By the definition of $\Psi_\invset$, for any $\psi\in\Psi_\invset(c)$, we have that $\psi\in \mathbf{E}_\invset$.
As $\invset$ contains a degree and discriminant, $\Psi_\invset(c) \subseteq \Psi_{n,J_0}(c)$, 
so $D_{\mathbf{E}_\invset}(r) \subseteq D_{\mathbf{E}_{n,J_0}}(r)$,
and so by Theorem \ref{thm discs-D}, 
we know that $\Psi_\invset(c)$ is a set of disjoint closed discs.

Let $f(x) = \sum f_i x^i \in \mathbf{E}_\invset$ and $g(x) = \sum \psi_i x^i$ such that
$g_i\in R_{1,c}$ and $g_i \equiv f_i \mod \mathfrak{p}^c$.
By Corollary \ref{cor discs-D}, $g\in\mathbf{E}_\invset$, and with $g_i\in R_{1,c}$, we have $g\in\Psi_\invset$.
By our choices of $g_i$, we have that $v_\PP(f_i-g_i) \geq c$ for $i=0,\ldots,n-1$.
Therefore, for all $i$,
\[ v_\PP(f_i-g_i) + \frac{i}{n} \geq c \]
which, by Lemma \ref{lem dfg}, shows that $f\in D_{\mathbf{E}_\invset}(g,r)$ with $r=|\mathfrak{p}^c|$.
\end{proof}

\begin{lemma} Let $\invset$ be a set of invariants of a totally ramified extension over $\KK$ containing degree $n$ and discriminant $\PP^{n+J_0-1}$.
Let $c > 1 + (2J_0)/n$ and
let $\#D_{\mathbf{E}_\invset}(r)$ denote the number of disjoint closed discs of radius $r=|\pi^c|$ in $\mathbf{E}_\invset$.
Then the number of elements in $\mathbf{K}_\invset$ is
\[
\#\mathbf{K}_\invset = \#D_{\mathbf{E}_\invset}(r)\frac{n}{(q-1)q^{nc-(n+J_0-1)-2}}
\]
\end{lemma}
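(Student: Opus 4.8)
The plan is to count elements of $\mathbf{K}_\invset$ by a fiber-counting argument over the set $\mathbf{E}_\invset$ of generating Eisenstein polynomials, combining two ingredients: the disc decomposition of $\mathbf{E}_\invset$ from the preceding corollary, and the fact that each extension in $\mathbf{K}_\invset$ is generated by a controlled number of these polynomials. First I would recall that every extension $\KL\in\mathbf{K}_\invset$ arises as $\KK(\alpha)$ for $\alpha$ a root of some $f\in\mathbf{E}_\invset$, so there is a surjection from $\mathbf{E}_\invset$ (or rather from a suitable set of roots) onto $\mathbf{K}_\invset$. The two things to quantify are therefore: (i) how many polynomials in $\mathbf{E}_\invset$ generate a fixed $\KL$, and (ii) how these polynomials are distributed among the discs $D_{\mathbf{E}_\invset}(\psi,r)$.

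For step (i), I would use Krasner's distance machinery (Proposition \ref{prop dfg}, Lemma \ref{lem dfg}) together with Corollary \ref{cor discs-D}. The key classical computation, exactly as in Krasner's original proof, is that the Eisenstein polynomials $g$ with $\KK(\beta)\cong\KL$ for some root $\beta$ of $g$ and lying in a given disc of radius $r=|\pi^c|$ around an $f$ defining $\KL$ are precisely those at distance $d(f,g)<|\pi^c|$, and one counts such $g$ via the minimal polynomials of the elements $\beta$ in $\OL$ with $v_\PP(\beta-\alpha)$ large enough. Concretely: an element $\beta=\alpha+\varepsilon$ with $v_\PP(\varepsilon)\ge c - \tfrac{n-1}{n}$ (so that $v_\PP(f(\beta))$ is large in the sense of Lemma \ref{lem dfg}) is again a uniformizer generating $\KL$, and its minimal polynomial $g$ satisfies $g\equiv f\bmod\PP^c$; the number of distinct such $\beta$ modulo the relation $\beta\sim\beta'$ iff they have the same minimal polynomial (i.e.\ are conjugate over $\KK$) is the count we want. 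Counting the $\beta$'s: the relevant $\varepsilon$ range over a quotient of additive cosets of size $q^{?}$ determined by $c$ and the different exponent $n+J_0-1$ (the different of $\KL/\KK$ is $\PP_\KL^{n+J_0-1}$ since $\disc=\PP^{n+J_0-1}$), and dividing by $n$ conjugates and by the unit factor $(q-1)$ coming from scaling uniformizers gives the stated denominator $(q-1)q^{nc-(n+J_0-1)-2}$, with the extra $n$ in the numerator recording that each extension is hit by $n$ choices of root. This is precisely where Lemma \ref{lem res pol zeros} and the structure of $\Psi_\invset$ ensure the invariants $\invset$ are preserved under $\alpha\mapsto\delta\alpha$ and small perturbations, so that the fiber over each disc has the same size whether or not we impose the extra invariants beyond the discriminant.

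The actual derivation I would present runs as follows: (1) Fix $\KL\in\mathbf{K}_\invset$ and a root $\alpha$ of some $f\in\mathbf{E}_\invset$ with $\KK(\alpha)=\KL$; by Corollary \ref{cor discs-D} and the disc corollary, every disc $D_{\mathbf{E}_\invset}(\psi,r)$ meeting the set of polynomials generating $\KL$ lies entirely over $\KL$ in the sense that some root of $\psi$ generates $\KL$. (2) Show, via Lemma \ref{lem dfg} and Proposition \ref{prop dfg}, that the polynomials in a single such disc are exactly the minimal polynomials of the uniformizers $\beta$ of $\KL$ with $v_\PP(\beta-\alpha)$ exceeding the threshold forced by $r=|\pi^c|$ — a standard Krasner-ball computation. (3) Count those uniformizers: the set of $\beta=\alpha u$ with $u$ a principal unit in a suitable congruence subgroup has cardinality a power of $q$ times $(q-1)$, and the exponent is pinned down using the valuation of the different $\mathfrak{d}_{\KL/\KK}=\PP_\KL^{n+J_0-1}$, which controls how $v_\PP(g(\alpha))$ relates to $v_\PP(\beta-\alpha)$. (4) Divide by the $n$ conjugates of each $\beta$ over $\KK$ to pass from polynomials to discs-per-field, yielding $\#\mathbf{K}_\invset = \#D_{\mathbf{E}_\invset}(r)\cdot \tfrac{n}{(q-1)q^{nc-(n+J_0-1)-2}}$. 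The main obstacle is step (3): getting the exponent $nc-(n+J_0-1)-2$ exactly right, i.e.\ correctly accounting for the interaction between the cut-off $c$, the different exponent, the $\tfrac{i}{n}$ shifts in Lemma \ref{lem dfg}, and the $(q-1)$ and the extra $-2$ (one from the Eisenstein normalization $f_{0,1}\ne 0$, one from the bookkeeping of principal units versus all units). I would handle this by first doing the pure-discriminant case — where the count must reproduce Krasner's classical mass formula — as a consistency check, and then observing that imposing the further invariants in $\invset$ changes neither the per-field fiber size nor the conjugate count, since by Lemma \ref{lem res pol zeros} and the construction of $\Psi_\invset(c)$ these invariants are stable under the uniformizer changes and $\PP^c$-perturbations used above; thus only $\#D_{\mathbf{E}_\invset}(r)$ varies with $\invset$, and the displayed identity follows uniformly.
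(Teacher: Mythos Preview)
Your approach — relating discs in $\mathbf{E}_\invset$ to balls of uniformizers via the minimal-polynomial map and Krasner's distance — is the same as the paper's. Two points, however, are worth correcting.

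First, your invocation of Lemma~\ref{lem res pol zeros} is a red herring. The paper's argument is entirely uniform in $\invset$: once the preceding corollary supplies the disc decomposition of $\mathbf{E}_\invset$, only the degree $n$ and the discriminant exponent $J_0$ enter (through the root distances in Proposition~\ref{prop dfg}). No stability of the extra invariants under $\alpha\mapsto\delta\alpha$ or under $\PP^c$-perturbation is needed or used.

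Second, your step~(4) is not correct as a per-field statement. Dividing by ``the $n$ conjugates of $\beta$'' presumes all $n$ conjugates lie in the fixed field $\KL$, but only $|\mathrm{Aut}(\KL/\KK)|$ of them do; the rest lie in the conjugate subfields of $\clK$. The paper sidesteps this by working globally: it sets $\Pi_\invset=\bigcup_{\KL\in\mathbf{K}_\invset}(\mathfrak{p}_\KL\setminus\mathfrak{p}_\KL^2)$, shows that the map $\chi$ sending a uniformizer to its minimal polynomial carries each closed disc of radius $u=|\pi|^{t/n}$ in $\Pi_\invset$ bijectively onto a closed disc of radius $s=u\cdot|\pi|^{(n+J_0-1)/n}$ in $\mathbf{E}_\invset$, and observes that $\chi$ is globally $n$-to-one. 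Since each $\mathfrak{p}_\KL\setminus\mathfrak{p}_\KL^2$ contributes exactly $(q-1)q^{t-2}$ such discs, one gets
\[
\#\mathbf{K}_\invset\,(q-1)q^{t-2}=n\,\#D_{\mathbf{E}_\invset}(s)
\]
with no automorphism bookkeeping; specialising $t=nc-(n+J_0-1)$ (so that $s=|\pi^c|=r$) then pins down the exponent you flagged as the main obstacle.
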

\begin{proof}
Let $\Pi_\invset = \cup_{L\in\mathbf{K}_\invset}\mathfrak{p}_\KL\setminus\mathfrak{p}_\KL^2$, where $\mathfrak{p}_\KL$ is the prime ideal of $L$.
Let $\chi$ be the map that sends $\alpha\in\Pi_\invset$ to its minimal polynomial $\chi(\alpha) \in \mathbf{E}_\invset$.

Let $t>J_0+1$ be an integer and let $s = |\pi^{(n+j_0-1+t)/n}|$.
Let $u = |\pi^t|^{1/n}$, and let $\alpha,\beta\in \Pi_\invset$ such that $|\alpha-\beta|\leq u$.
By Krasner's Lemma, $\alpha$ and $\beta$ generate the same field.
Let $\alpha=\alpha_1,\alpha_2,\ldots,\alpha_n$ denote the roots of $\chi(\alpha)$.
Then
\begin{align*}
d(\chi(\alpha),\chi(\beta)) = &\prod_{i=1}^n \max \{ |\beta-\alpha|, |\alpha-\alpha_i| \}\\
 &\leq u \prod_{i=2}^n |\alpha-\alpha_i|
 = u |(\chi(\alpha))'(\alpha)|
 = u |\pi^{(n+j-1)/n}| = s
\end{align*}
Let $D_\Pi(\alpha,u)$ denote the closed disc of center $\alpha$ and radius $u$ in $\Pi_\invset$.
As $d(\chi(\alpha),\chi(\beta)) \leq s$, we have $\chi(D_\Pi(\alpha,u)) \subset D_{\mathbf{E}_\invset}(\chi(\alpha),s)$.
Conversely, let $f,g\in\mathbf{E}_\invset$ such that $d(f,g)\leq s$.
Let $\alpha$ be a root of $f$ so $f=\chi(\alpha)$ and $\beta$ be the root of $g$ such that $|\beta-\alpha|$ is minimal.
We claim that $|\beta-\alpha| < u$, as otherwise
\begin{align*}
d(f,g) = &\prod_{i=1}^n \max \{|\beta-\alpha|,|\alpha-\alpha_i|\}
 \geq \prod_{i=1}^n \max \{u,|\alpha-\alpha_i|\}\\
 &\geq u \prod_{i=1}^n |\alpha-\alpha_i|
 = u |f'(\alpha)|
 = u |\pi^{(n+j-1)/n}| = s,
\end{align*}
which contradicts the assumption that $d(f,g)<s$.
As $|\beta-\alpha| < u$, we have $D_{\mathbf{E}_\invset}(\chi(\alpha),s) \subset \chi(D_\Pi(\alpha,u))$.
So, for all $\alpha\in\Pi_\invset$,
\[
D_{\mathbf{E}_\invset}(\chi(\alpha),s)=\chi(D_\Pi(\alpha,u)).
\]
It is clear that the map $\chi$ is $n$-to-one and surjective.
Now, the inverse image of $\chi(\alpha)$ is the set of conjugates of $\alpha$ over $\KK$.
As $t>j+1$, the closed discs of radius $u$ centered at these conjugates are all disjoint.
Thus, the inverse image of any closed disc of radius $s$ in $\mathbf{E}_\invset$ is the
disjoint union of $n$ closed discs of radius $u$ in $\Pi_\invset$.
However, by the earlier remark, any such disc is contained in
$\mathfrak{p}_\KL\setminus\mathfrak{p}_\KL^2$ for some $\KL \in \mathbf{K}_\invset$.
Therefore, the number of disjoint closed discs of radius $u$ in $\Pi_\invset$ is equal to
$\#\mathbf{K}_\invset$ times the number of disjoint closed discs in $\mathfrak{p}_\KL\setminus\mathfrak{p}_\KL^2$,
which does not depend on $\KL$ and is $q^{t-1}-q^{t-2}$.
Thus,
\[
\#\mathbf{K}_\invset\; q^{t-2}(q-1) = n\; \#D_{\mathbf{E}_\invset}(s),
\]
and choosing $t = nc-(n+J_0-1)$ gives us our result.
\end{proof}

%%%%%%%%%%%%%%%%%%%%%%%%%%%%%%%%%%%%%%%%%%%%%%%%%%%%%%%%%%%%%%%%%%%%%%%%%%%%%%

\section{Mass Formula Given a Discriminant (Krasner)}

\begin{proposition}
Let $\Psi_{n,J_0}$ be the set of polynomials over $\KK$ with degree $n$ and discriminant $\PP^{n+J_0-1}$ whose coefficients are in $R_{1,c}$.
The number of polynomials in $\Psi_{n,J_0}$ is
\[
\#\mathbf{D}_{E_{n,J_0}}(c) = \left\{\begin{array}{ll}
 (q-1)   \, q^{ c-2+(n-1)c-\sum_{i=1}^{n-1} l(i)}  & \mbox{ for } b = 0\\
 (q-1)^2 \, q^{ c-2+(n-1)c-\sum_{i=1}^{n-1} l(i)-1} & \mbox{ for } b > 0
\end{array}\right.
\]
\end{proposition}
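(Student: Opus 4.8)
The plan is to count the polynomials in $\Psi_{n,J_0}(c)$ directly from its description in Proposition \ref{prop psi-disc}, since each $\psi\in\Psi_{n,J_0}(c)$ is determined by an independent choice of each coefficient $\psi_i$ for $0\le i\le n-1$ (the leading coefficient is fixed at $1$), and the number of choices for $\psi_i$ is just the cardinality of the set ($\repss{1,c}$, $\repss{l(i),c}$, or $\reps{l(i),c}$) from which it is drawn. So the whole proof reduces to computing these three cardinalities and multiplying them together.

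First I would record the relevant counts. Since $\reps{l,m}$ is a set of representatives of $\PP^l/\PP^m$ and the residue field has $q$ elements, $\#\reps{l,m}=q^{m-l}$; and $\repss{l,m}$ consists of those representatives of exact valuation $l$, i.e. $\PP^l/\PP^m$ minus $\PP^{l+1}/\PP^m$, so $\#\repss{l,m}=q^{m-l}-q^{m-l-1}=(q-1)q^{m-l-1}$. Applying this with $m=c$: the constant term contributes $\#\repss{1,c}=(q-1)q^{c-2}$; for each $i$ with $1\le i\le n-1$ and $i\ne b_0$, the coefficient $\psi_i$ contributes $\#\reps{l(i),c}=q^{c-l(i)}$; and when $b_0\ne0$ the coefficient $\psi_{b_0}$ contributes $\#\repss{l(b_0),c}=(q-1)q^{c-l(b_0)-1}$ instead of $q^{c-l(b_0)}$.

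Then I would multiply. In the case $b_0=0$ there is no distinguished middle coefficient, so
\[
\#\Psi_{n,J_0}(c)=(q-1)q^{c-2}\prod_{i=1}^{n-1}q^{c-l(i)}=(q-1)\,q^{c-2+(n-1)c-\sum_{i=1}^{n-1}l(i)},
\]
using $\sum_{i=1}^{n-1}(c-l(i))=(n-1)c-\sum_{i=1}^{n-1}l(i)$. In the case $b_0>0$, the factor $q^{c-l(b_0)}$ is replaced by $(q-1)q^{c-l(b_0)-1}$, which introduces one extra factor of $(q-1)$ and lowers the exponent by $1$, giving
\[
\#\Psi_{n,J_0}(c)=(q-1)^2\,q^{c-2+(n-1)c-\sum_{i=1}^{n-1}l(i)-1},
\]
matching the claimed formula (with $b$ in the statement being $b_0$).

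There is essentially no hard part here: the statement is a bookkeeping computation once one has the cardinalities of $\reps{l,m}$ and $\repss{l,m}$, which follow immediately from their definitions in the Notation subsection and the fact that $|\RK|=q$. The only point requiring a sentence of care is to note that the choices of the $\psi_i$ are genuinely independent — the sets $\reps{l(i),c}$ etc.\ are fixed in advance and the constraint on $\psi_{b_0}$ (when $b_0\ne0$) is a constraint on that one coefficient alone — so the total count is the product of the individual counts, and that the indices over which we take the product exhaust $\{0,1,\dots,n-1\}$ exactly once.
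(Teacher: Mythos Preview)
Your proof is correct and is exactly the counting argument the paper intends: the paper omits the proof of this proposition but gives the identical argument for the analogous ramification-polygon proposition (count $\#R^*_{1,c}=(q-1)q^{c-2}$, $\#R_{l(i),c}=q^{c-l(i)}$, $\#R^*_{l(i),c}=(q-1)q^{c-l(i)-1}$, and take the product).
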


\begin{theorem}
The number of distinct totally ramified extensions of $\KK$ of degree $n$ and discriminant
$\PP^{n+J_0-1}$ is
\[
\#\mathbf{K}_{n,J_0} = \left\{\begin{array}{ll}
 n          \, q^{ n+J_0-1-\sum_{i=1}^{n-1} l(i)}  & \mbox{ for } b = 0\\
 n \, (q-1) \, q^{ n+J_0-1-\sum_{i=1}^{n-1} l(i)-1}  & \mbox{ for } b > 0
\end{array}\right.
\]
\end{theorem}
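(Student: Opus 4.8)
The plan is to combine the two preceding results in the obvious way: the Lemma that expresses $\#\mathbf{K}_{n,J_0}$ in terms of $\#D_{\mathbf{E}_{n,J_0}}(r)$, and the Proposition that counts the polynomials in $\Psi_{n,J_0}(c)$. First I would invoke Theorem~\ref{thm discs-D}, which identifies $\mathbf{E}_{n,J_0}$ as the disjoint union of the closed discs of radius $r=|\mathfrak{p}^c|$ centered at the elements of $\Psi_{n,J_0}(c)$; since distinct centers give distinct (disjoint) discs, this gives $\#D_{\mathbf{E}_{n,J_0}}(r) = \#\mathbf{D}_{E_{n,J_0}}(c)$, the quantity computed in the preceding Proposition. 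Then I would substitute this into the formula $\#\mathbf{K}_\invset = \#D_{\mathbf{E}_\invset}(r)\cdot n/\bigl((q-1)q^{nc-(n+J_0-1)-2}\bigr)$ from the Lemma, specialized to $\invset = \{n, \PP^{n+J_0-1}\}$.

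The computation then splits into the two cases $b_0=0$ and $b_0>0$ exactly as in the polynomial-counting Proposition. In the case $b_0=0$, we have $\#D_{\mathbf{E}_{n,J_0}}(r) = (q-1)\,q^{c-2+(n-1)c-\sum_{i=1}^{n-1}l(i)}$, so
\[
\#\mathbf{K}_{n,J_0} = \frac{n\,(q-1)\,q^{c-2+(n-1)c-\sum l(i)}}{(q-1)\,q^{nc-(n+J_0-1)-2}} = n\,q^{\,c-2+(n-1)c-\sum l(i) - nc + (n+J_0-1) + 2}.
\]
The exponent simplifies: $c + (n-1)c - nc = 0$ and $-2+2=0$, leaving $n+J_0-1-\sum_{i=1}^{n-1}l(i)$, which is the claimed exponent. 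In the case $b_0>0$, the extra factor of $(q-1)$ survives the division (the denominator only contributes one factor of $q-1$), and the exponent picks up the additional $-1$ from $\#\mathbf{D}_{E_{n,J_0}}(c)$, giving $n\,(q-1)\,q^{n+J_0-1-\sum l(i)-1}$, again as claimed. So the two cases follow by the same arithmetic with a bookkeeping difference of one factor $(q-1)$ and one power of $q$.

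There is essentially no obstacle here: the statement is a corollary obtained by chaining the two immediately preceding results and carrying out the exponent arithmetic, and the only point requiring a word of care is the identification $\#D_{\mathbf{E}_{n,J_0}}(r) = \#\Psi_{n,J_0}(c)$, which is exactly the content of Theorem~\ref{thm discs-D} (each polynomial of $\Psi_{n,J_0}(c)$ is the center of one of the disjoint discs, and these discs exhaust $\mathbf{E}_{n,J_0}$, so counting discs is the same as counting centers). If anything needs verifying it is that the hypothesis $c > 1+(2J_0)/n$ needed by the Lemma is compatible with the hypothesis $c > 1 + 2a_0 + \tfrac{2b_0}{n}$ used in the polynomial count; but $1 + 2a_0 + \tfrac{2b_0}{n} = 1 + \tfrac{2(a_0 n + b_0)}{n} = 1 + \tfrac{2J_0}{n}$, so the two conditions coincide and any valid $c$ works for both. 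Note also that the final answer is independent of $c$, as it must be, since the $c$-dependent terms $c-2+(n-1)c$ in the numerator exponent exactly cancel against $nc-2$ in the denominator exponent.
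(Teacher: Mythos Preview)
Your proposal is correct and follows exactly the approach the paper intends: the theorem is obtained by feeding the polynomial count from the preceding Proposition into the general mass formula Lemma of Section~4, using Theorem~\ref{thm discs-D} to identify $\#D_{\mathbf{E}_{n,J_0}}(r)$ with $\#\Psi_{n,J_0}(c)$. The paper leaves this particular theorem unproved but displays the identical one-line substitution for the analogous theorem in the ramification-polygon case, and your exponent arithmetic (including the check that the two hypotheses on $c$ coincide) is accurate.
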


\begin{example}\label{ex cnt disc}
As an example, let us count all totally ramified extensions of $\Q_3$ with degree 9 and discriminant $(3)^{9+7-1}$.
From this discriminant, we have $J_0=7$.  We find minima for the $v_\PP(\varphi_i)$ if $\varphi$ is to be an
Eisenstein polynomial of this discriminant. By Lemma \ref{lem ell},
\[l(i) = v_\PP(\varphi_i) \geq \left\{\begin{array}{ll}
 2  & \mbox{ for } i \in \{1,2,4,5\} \\
 1  & \mbox{ for } i \in \{3,6,7,8\}
\end{array}\right.\]
So, $\sum l(i) = 12$, and from the formula, we find that there are
$9 \cdot 2 \cdot 3^{9+7-1-12-1} = 162$
degree 9 extensions of $\Q_3$ with discriminant $(3)^{9+7-1}$.
\end{example}

%%%%%%%%%%%%%%%%%%%%%%%%%%%%%%%%%%%%%%%%%%%%%%%%%%%%%%%%%%%%%%%%%%%%%%%%%%%%%%

\section{Mass Formula Given a Ramification Polygon}

\begin{proposition}
Let $\Psi_{n,J_0,\rpol}(c)$ be the set of Eisenstein polynomials with degree $n$, discriminant $\PP^{n+J_0-1}$, and ramification polygon $\rpol$ with coefficients whose coefficients above $c$ are zero (see Lemma \ref{lem krasner bound}). Then
%The number of $\psi\in\Psi_{n,J_0,\rpol}(c)$ is given by
\[
%\#\mathbf{D}_{\Psi_{n,J_0,\rpol}}(c) = (q-1)^{\#B_\rpol+1} \, q^{c-2+(n-1)c-\sum_{i=1}^{n-1} L(i)-\#B_\rpol}
\#\Psi_{n,J_0,\rpol}(c) = (q-1)^{\#B_\rpol+1} \, q^{c-2+(n-1)c-\sum_{i=1}^{n-1} L(i)-\#B_\rpol}
\]
\end{proposition}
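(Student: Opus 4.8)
The plan is to count the polynomials in $\Psi_{n,J_0,\rpol}(c)$ directly by multiplying the number of choices available for each coefficient $\psi_i$, exactly as in the proof of the analogous count for $\Psi_{n,J_0}(c)$ given in the preceding section. The key point is that the sets $\reps{l,m}$ and $\repss{l,m}$ have known sizes: a set of representatives of $\PP^l/\PP^m$ has $q^{m-l}$ elements, while the subset $\repss{l,m}$ of those with valuation exactly $l$ has $(q-1)q^{m-l-1}$ elements. So for each index $i$ with $1\le i\le n-1$ and $i\notin B_\rpol$ the factor is $q^{c-L_\rpol(i)}$; for $i=0$ the factor is $(q-1)q^{c-2}$ since $\psi_0\in\repss{1,c}$; and for each $i\in B_\rpol$ the factor is $(q-1)q^{c-L_\rpol(i)-1}$.

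First I would record these cardinalities (citing that $|\calO_\KK/\PP|=q$, so $|\PP^l/\PP^m|=q^{m-l}$, and that the units contribute the usual $(q-1)/q$ proportion). Then I would take the product over all $i$ from $0$ to $n-1$, separating the three cases in the definition of $\Psi_{n,J_0,\rpol}(c)$. Collecting the powers of $q-1$: one factor from $i=0$ and one from each element of $B_\rpol$, giving $(q-1)^{\#B_\rpol+1}$. Collecting the powers of $q$: from $i=0$ we get $q^{c-2}$; from the $\#B_\rpol$ indices in $B_\rpol$ we get $q^{\,(n-1)\text{-ish sum}}$ but with an extra $-1$ in each exponent, i.e.\ $q^{-\#B_\rpol}$ beyond the $q^{-L_\rpol(i)}$ terms; and from the remaining $n-1-\#B_\rpol$ indices we get $q^{c-L_\rpol(i)}$ each. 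Since every index $i$ in $\{1,\dots,n-1\}$ contributes exactly one factor of $q^{c}$ and one factor of $q^{-L_\rpol(i)}$ regardless of whether $i\in B_\rpol$, the total power of $q$ is $(c-2) + (n-1)c - \sum_{i=1}^{n-1}L_\rpol(i) - \#B_\rpol$, which is exactly the claimed exponent. One must also note that $\psi_i$ for $i\in B_\rpol$ ranges over all of $\repss{L_\rpol(i),c}$ (not a further-restricted set), which is consistent with the definition given in Proposition \ref{prop psi-rpol}.

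The only substantive point to check — and the main obstacle — is that the exponents are genuinely nonnegative so that the formula makes sense as a count, and more importantly that $\Psi_{n,J_0,\rpol}(c)$ is nonempty precisely when $\rpol$ is a legitimate ramification polygon (otherwise the "count" is vacuous). For the first, $c > 1 + 2a_0 + 2b_0/n$ is assumed, and $L_\rpol(i)\le c$ follows because Lemma \ref{lem krasner bound} and the bound on $c$ guarantee the representative sets are nonempty; I would spell out that $L_\rpol(i) < c$ for all $i$, so each factor $q^{c-L_\rpol(i)}$ or $q^{c-L_\rpol(i)-1}$ is a positive integer. For the second, the hypothesis that $\rpol$ "is a ramification polygon" is exactly what Proposition \ref{prop R iff} characterizes, and Proposition \ref{prop psi-rpol} already asserts $\Psi_{n,J_0,\rpol}(c)$ is nonempty and generates the relevant extensions; so there is nothing new to prove there, only to invoke. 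Thus the proof reduces to the bookkeeping of exponents above, which is routine once the cardinalities of $\reps{l,m}$ and $\repss{l,m}$ are in hand.
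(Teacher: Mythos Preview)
Your proposal is correct and follows essentially the same approach as the paper: count the choices for each coefficient separately using $|\reps{l,m}|=q^{m-l}$ and $|\repss{l,m}|=(q-1)q^{m-l-1}$, then take the product and collect exponents. The paper's proof is in fact just the three-line version of your argument, omitting your extra checks on nonemptiness and nonnegativity of exponents.
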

\begin{proof}
The number of elements in $R^*_{1, c}$ is $(q-1) \, q^{c-2}$.  For each $i\notin B_\rpol$, the number of elements in $R_{L_\rpol(i),c}$ is $q^{c-L(i)}$, and for $i\in B_\rpol$ the number in $R_{L_\rpol(i),c}^*$ is $(q-1)q^{c-L(i)-1}$. The product of these is our result.
\end{proof}

\begin{theorem}
The number of distinct totally ramified extensions of $\KK$ of degree $n$, discriminant
$\PP^{n+J_0-1}$, and ramification polygon $\rpol$ is
\[
n(q-1)^{ \#B_\rpol} \, q^{ n+J_0-1 - \sum_{i=1}^{n-1} L(i)-\#B_\rpol}
\]
\end{theorem}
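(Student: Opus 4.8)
The plan is to combine the three results immediately preceding this theorem. Fix any real number $c > 1 + 2a_0 + \tfrac{2b_0}{n}$ (equivalently $c > 1 + (2J_0)/n$), and let $\invset = \{\,n,\ \PP^{n+J_0-1},\ \rpol\,\}$ be the corresponding set of invariants, which minimally contains a degree and a discriminant. If $\rpol$ fails to be the ramification polygon of any Eisenstein polynomial of degree $n$ and discriminant $\PP^{n+J_0-1}$ (that is, the conditions of Proposition \ref{prop R iff} cannot be met, or Ore's conditions of Proposition \ref{prop.ore} fail), then $\mathbf{K}_\invset = \emptyset$ and $\Psi_{n,J_0,\rpol}(c) = \emptyset$, so both sides vanish; assume henceforth this is not the case.

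First I would invoke the corollary that $\mathbf{E}_\invset$ is the disjoint union of the closed discs $D_{\mathbf{E}_\invset}(\psi, r)$ with centers $\psi \in \Psi_\invset(c)$ and radius $r = |\PP^c|$, applied to $\invset$ as above, together with Proposition \ref{prop psi-rpol}, which identifies $\Psi_\invset(c)$ with $\Psi_{n,J_0,\rpol}(c)$ (its members are Eisenstein, lie in $R_{1,c}$, and their roots generate exactly the totally ramified extensions with these invariants). This gives $\#D_{\mathbf{E}_\invset}(r) = \#\Psi_{n,J_0,\rpol}(c)$, and by the preceding proposition this equals $(q-1)^{\#B_\rpol + 1}\, q^{\,c - 2 + (n-1)c - \sum_{i=1}^{n-1} L(i) - \#B_\rpol}$.

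Next, apply the lemma that computes $\#\mathbf{K}_\invset$ from the disc count, namely $\#\mathbf{K}_\invset = \#D_{\mathbf{E}_\invset}(r)\cdot \dfrac{n}{(q-1)\, q^{\,nc - (n+J_0-1) - 2}}$. Substituting and simplifying, one power of $(q-1)$ cancels, leaving $(q-1)^{\#B_\rpol}$, and the exponent of $q$ becomes
\[
c - 2 + (n-1)c - \sum_{i=1}^{n-1} L(i) - \#B_\rpol - nc + (n+J_0-1) + 2 = n + J_0 - 1 - \sum_{i=1}^{n-1} L(i) - \#B_\rpol,
\]
since $c - 2 + (n-1)c - nc + 2 = 0$. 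This yields the stated formula, and in particular the answer is independent of the auxiliary parameter $c$, as it must be.

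The computation itself is routine arithmetic of exponents; the only real content is verifying that the hypotheses of the cited results hold for the triple $\invset = \{\,n,\ \PP^{n+J_0-1},\ \rpol\,\}$ — that $\Psi_{n,J_0,\rpol}(c)$ is a legitimate instance of $\Psi_\invset(c)$ in the sense of the generalized Krasner setup — which is exactly what Propositions \ref{prop R iff} and \ref{prop psi-rpol} provide. Thus the main point to be careful about is the degenerate case of a non-realizable $\rpol$, treated above; beyond that there is no substantive obstacle.
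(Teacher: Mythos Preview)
Your proposal is correct and follows exactly the paper's approach: the paper's own proof is the single displayed line
\[
\frac{n \; \#\mathbf{D}_{E_{n,J_0,\rpol}}(c)}{(q-1)q^{nc-(n+J_0-1)-2}} = n(q-1)^{\#B_\rpol} \, q^{n+J_0-1 - \sum_{i=1}^{n-1} L(i)-\#B_\rpol},
\]
which is precisely the substitution-and-simplification you carry out in detail. Your added remarks on the degenerate case and the identification $\Psi_\invset(c)=\Psi_{n,J_0,\rpol}(c)$ are reasonable elaborations but introduce nothing new methodologically.
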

\begin{proof}
\[
\frac{n \; \#\mathbf{D}_{E_{n,J_0,\rpol}}(c)}{(q-1)q^{nc-(n+J_0-1)-2}} = n(q-1)^{\#B_\rpol} \, q^{n+J_0-1 - \sum_{i=0}^{n-1} L(i)-\#B_\rpol}
\]
\end{proof}

\begin{example}[Example \ref{ex cnt disc} continued]\label{ex cnt rp}
We count the totally ramified extensions of $\Q_3$ with degree 9 and discriminant $(3)^{9+7-1}$
for the possible choices of ramification polygons.  Again, we have $J_0=7$ and
\[l_\rpol(i,0) = l(i) = \left\{\begin{array}{ll}
 2  & \mbox{ for } i \in \{1,2,4,5\} \\
 1  & \mbox{ for } i \in \{3,6,7,8\}
\end{array}\right.\]

There are two possible ramification polygons for this degree and discriminant: $\rpol_1$ with vertices $\{(1,7),(9,0)\}$ and $\rpol_2$ with vertices $\{(1,7),(3,3),(9,0)\}$.  We have already considered the conditions on the polynomial dictated by the vertex $(1,7)$, so it only remains to consider the effect of a vertex (or lack thereof) above 3.

For $\rpol_1$, no vertex above 3 means $l_{\rpol_1}(3,1) = 2$ and $l_{\rpol_1}(6,1) = 1$.
For an Eisenstein polynomial to have ramification polynomial $\rpol_1$, the minimum valuations of the coefficients are
\[L_{\rpol_1}(i) = \max_s \{l_{\rpol_1}(i,s)\} = \left\{\begin{array}{ll}
 2  & \mbox{ for } i \in \{1,2,3,4,5\} \\
 1  & \mbox{ for } i \in \{6,7,8\}
\end{array}\right..\]
So, $\sum L_{\rpol_1}(i) = 13$.
Next we find the set of indexes ($1\leq i\leq n-1$) of coefficients that must have a fixed valuation in order for an Eisenstein polynomial to generating such an extension according to Proposition \ref{prop R iff}.
\[ B_{\rpol_1} = \{J_i\text{ mod } n : 0\leq i \leq s_\ell \text{ and } J_i\text{ mod } n\neq 0 \} = \{7\} \]
As $\#B_{\rpol_1} = 1$, only one coefficient must have its minimum valuation.
Thus, by applying the formula, we find that there are $9 \cdot 2^1 \cdot 3^{9+7-1-13-1} = 54$ degree 9 extensions of $\Q_3$ with ramification polygon $\rpol_1$.

For $\rpol_2$, the vertex $(3,3)$ gives us that $l_{\rpol_2}(3,1) = 1$ and $l_{\rpol_2}(6,1) = 1$.
For an Eisenstein polynomial to have ramification polynomial $\rpol_2$, the minimum valuations of the coefficients would have to be
\[L_{\rpol_2}(i) = \max_s \{l_{\rpol_2}(i,s)\} = \left\{\begin{array}{ll}
 2  & \mbox{ for } i \in \{1,2,4,5\} \\
 1  & \mbox{ for } i \in \{3,6,7,8\}
\end{array}\right. .\]
So, $\sum L_{\rpol_2}(i) = 12$.
The set of indexes ($1\leq i\leq n-1$) of coefficients that must have a specific valuation in order for an Eisenstein polynomial to generating such an extension is
\[ B_{\rpol_2} = \{J_i\text{ mod } n : 0\leq i \leq s_\ell \text{ and } J_i\text{ mod } n\neq 0\} = \{3,7\}. \]
So there are $\#B_{\rpol_2} = 2$ coefficients of fixed valuation.
Thus, by applying the formula, we find that there are $9 \cdot 2^2 \cdot 3^{9+7-1-12-2} = 108$ degree 9 extensions of $\Q_3$ with ramification polygon $\rpol_2$.

Krasner's mass formula states that there are 162 totally ramified extensions of $\Q_3$ with degree 9, which we have partitioned by the two possible ramification polygons.
\end{example}

%%%%%%%%%%%%%%%%%%%%%%%%%%%%%%%%%%%%%%%%%%%%%%%%%%%%%%%%%%%%%%%%%%%%%%%%%%%%%%

\section{Mass Formula Given Residual Polynomials}

\begin{proposition}
The number of Eisenstein polynomials of degree $n$, with given discriminant $\PP^{n+J_0-1}$, ramification polygon $\rpol$, and invariant $\invA$ with coefficients in $\reps{1,c}$ is
\[
(\#\invA)\, (q-1) \, q^{c-2+(n-1)c-\sum_{i=1}^{n-1} L_\rpol(i)-\#B_\rpol}
%(\#\invA) \, q^{c-2+(n-1)c-\sum_{i=1}^{n-1} L_\rpol(i)-\#B_\rpol}
\]
\end{proposition}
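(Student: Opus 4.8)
The plan is to count the polynomials in $\Psi_{n,J_0,\rpol,\invA}(c)$ directly, exactly as in the proof of the corresponding count for $\Psi_{n,J_0,\rpol}(c)$, but keeping track of the extra constraints imposed by fixing a representative of $\invA$. Recall from Proposition \ref{prop psi-resseg} that $\Psi_{n,J_0,\rpol,\invA}(c)$ consists of those $\psi\in\Psi_{n,J_0,\rpol}(c)$ for which, for every $i\in B_\rpol$, the coefficient $\underline{\psi}_{i,L_\rpol(i)}$ is pinned down by a fixed choice of $(\RA_1,\dots,\RA_\ell)\in\invA$ via Lemma \ref{lem A fix}. So the counting splits into three contributions: the constant coefficient $\psi_0\in\repss{1,c}$, the coefficients $\psi_i$ with $i\in B_\rpol$, and the coefficients $\psi_i$ with $1\le i\le n-1$, $i\notin B_\rpol$.

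First I would handle the easy pieces. The number of elements in $\repss{1,c}$ is $(q-1)q^{c-2}$, and for each $i\notin B_\rpol$ with $1\le i\le n-1$ the number of elements in $\reps{L_\rpol(i),c}$ is $q^{c-L_\rpol(i)}$ (these are precisely the same factors as in the proof of the formula for $\#\Psi_{n,J_0,\rpol}(c)$). Next I would treat $i\in B_\rpol$: an element of $\repss{L_\rpol(i),c}$ is determined by its $\pi$-adic digits in positions $L_\rpol(i),L_\rpol(i)+1,\dots,c-1$, with the bottom digit $\underline{\psi}_{i,L_\rpol(i)}$ required to be nonzero; but now that bottom digit is \emph{not} free — it is forced to a specific value by Lemma \ref{lem A fix}, leaving only the $c-1-L_\rpol(i)$ higher digits free, each ranging over $\RK$, for a count of $q^{c-L_\rpol(i)-1}$ per such $i$. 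Here one must note that the value prescribed by Lemma \ref{lem A fix} is automatically nonzero (its formula is a product of the nonzero residual-polynomial coefficient $\RA_{(p^{s_i}-p^{s_k})/\slopeden}$ with a unit), so the constraint $\underline{\psi}_{i,L_\rpol(i)}\ne 0$ is consistent and imposes nothing further.

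Multiplying these contributions over all $i$ gives
\[
(q-1)q^{c-2}\cdot\prod_{\substack{1\le i\le n-1\\ i\notin B_\rpol}} q^{c-L_\rpol(i)}\cdot\prod_{i\in B_\rpol} q^{c-L_\rpol(i)-1}
= (q-1)\,q^{c-2+(n-1)c-\sum_{i=1}^{n-1}L_\rpol(i)-\#B_\rpol}
\]
for each fixed representative $(\RA_1,\dots,\RA_\ell)$ of $\invA$. Finally, since distinct representatives of $\invA$ produce disjoint sets $\Psi_{n,J_0,\rpol,\invA}(c)$ (two polynomials with the same $L_\rpol(i)$-digits but arising from different representatives differ in some pinned coefficient, hence define a different tuple of residual polynomials), and since every polynomial in $\Psi_{n,J_0,\rpol,\invA}(c)$ arises from some representative, the total is $\#\invA$ times the quantity above, which is the claimed formula. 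The only real subtlety — the step I expect to require the most care — is verifying that fixing one representative of $\invA$ really does pin down exactly one digit per $i\in B_\rpol$ (no more, no less) and that this is consistent across different $i$ lying on the same segment; this is exactly the content of Lemma \ref{lem A fix} together with the compatibility conditions in Proposition \ref{prop A}, so it can be invoked rather than reproved, but it should be cited explicitly to justify that the $\#B_\rpol$ exponent drop is correct.
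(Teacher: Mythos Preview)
Your proposal is correct and follows essentially the same approach as the paper's own proof: count $\psi_0\in\repss{1,c}$ (contributing $(q-1)q^{c-2}$), count the free coefficients $\psi_i\in\reps{L_\rpol(i),c}$ for $i\notin B_\rpol$ (each contributing $q^{c-L_\rpol(i)}$), count the constrained coefficients for $i\in B_\rpol$ with bottom digit pinned by Lemma~\ref{lem A fix} (each contributing $q^{c-L_\rpol(i)-1}$), and multiply by $\#\invA$ for the choice of representative. Your write-up is in fact more careful than the paper's, explicitly noting that the pinned digit is automatically nonzero and that distinct representatives yield disjoint polynomial sets; one small dependency you leave implicit (as does the paper) is that the value prescribed by Lemma~\ref{lem A fix} also involves $f_{0,1}$, but since you already count $\psi_0$ freely before pinning the $B_\rpol$-digits, the enumeration is unaffected.
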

\begin{proof}
The first non-zero coefficient of the constant term of an Eisenstein polynomial is not determined by the choice of $\invA$, so that coefficient may be any element of $R^*_{1, c}$, of which there are $(q-1) \, q^{c-2}$ elements.
For each $i\notin B_\rpol$, we have the number of elements in $R_{L_\rpol(i),c}$, which is $q^{c-L(i)}$.
For $i\in B_\rpol$, the choice of $(\RA_1,\ldots,\RA_\ell) \in \invA$, fixes the first non-zero coefficient of our coefficients.
The number of elements in $R_{L_\rpol(i),c}^*$ with a fixed first non-zero coefficient is $q^{c-L(i)-1}$.
We have $\#\invA$ ways to fix those coefficients, and the product of these is our result.
\end{proof}

\begin{theorem}
The number of distinct totally ramified extensions of $\KK$ of degree $n$, discriminant
$\PP^{n+J_0-1}$, ramification polygon $\rpol$, and invariant $\invA$ is
\[
n \, (\#\invA)\, q^{n+J_0-1 - \sum_{i=1}^{n-1} L_\rpol(i)-\#B_\rpol}
%n \, \frac{(\#\invA)}{(q-1)}\, q^{n+J_0-1 - \sum_{i=1}^{n-1} L_\rpol(i)-\#B_\rpol}
\]
\end{theorem}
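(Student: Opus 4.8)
The plan is to apply the counting lemma from Section 4, which expresses $\#\mathbf{K}_\invset$ in terms of the number of disjoint closed discs $\#D_{\mathbf{E}_\invset}(r)$, to the specific invariant set $\invset = (n, J_0, \rpol, \invA)$. First I would note that by the corollary following Theorem \ref{thm discs-D}, the set $\mathbf{E}_\invset$ of Eisenstein polynomials generating extensions with these invariants is a disjoint union of closed discs of radius $r = |\pi^c|$ centered at the polynomials of $\Psi_{n,J_0,\rpol,\invA}(c)$, once $c > 1 + (2J_0)/n$ is chosen; hence $\#D_{\mathbf{E}_\invset}(r) = \#\Psi_{n,J_0,\rpol,\invA}(c)$. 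This reduces the theorem to substituting the count of $\Psi_{n,J_0,\rpol,\invA}(c)$ — established in the preceding proposition as $(\#\invA)(q-1)q^{c-2+(n-1)c-\sum_{i=1}^{n-1} L_\rpol(i) - \#B_\rpol}$ — into the formula $\#\mathbf{K}_\invset = \#D_{\mathbf{E}_\invset}(r)\cdot n / \bigl((q-1)q^{nc-(n+J_0-1)-2}\bigr)$ from the lemma in Section 4.

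The bulk of the proof is then the arithmetic simplification. I would write
\[
\#\mathbf{K}_\invset
= \frac{n \cdot (\#\invA)(q-1)q^{c-2+(n-1)c - \sum_{i=1}^{n-1} L_\rpol(i) - \#B_\rpol}}{(q-1)q^{nc - (n+J_0-1) - 2}},
\]
cancel the factor $(q-1)$, and combine the powers of $q$: the exponent in the numerator is $c - 2 + (n-1)c - \sum L_\rpol(i) - \#B_\rpol = nc - 2 - \sum_{i=1}^{n-1} L_\rpol(i) - \#B_\rpol$, and subtracting the denominator exponent $nc - (n+J_0-1) - 2$ leaves $(n + J_0 - 1) - \sum_{i=1}^{n-1} L_\rpol(i) - \#B_\rpol$. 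This yields exactly $n(\#\invA)q^{n+J_0-1-\sum_{i=1}^{n-1} L_\rpol(i) - \#B_\rpol}$, as claimed.

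One point that deserves a sentence of justification is that the result is independent of the auxiliary parameter $c$: both $\#\Psi_{n,J_0,\rpol,\invA}(c)$ and the normalizing factor $q^{nc-(n+J_0-1)-2}$ scale as $q^{nc}$ up to a $c$-independent constant, so the quotient — and hence $\#\mathbf{K}_\invset$ — does not depend on the choice of $c > 1 + (2J_0)/n$, consistent with the fact that it counts actual field extensions. I would also remark, as in the ramification-polygon case, that summing this formula over all admissible invariants $\invA$ for a fixed $(n, J_0, \rpol)$ recovers the count of Section 6, since $\sum_{\invA} \#\invA$ is the number of tuples of residual polynomials, which partitions the extensions with ramification polygon $\rpol$.

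The main obstacle here is not conceptual but organizational: one must be careful that the hypotheses of the Section 4 lemma are genuinely met by $\invset = (n, J_0, \rpol, \invA)$ — in particular that $\invset$ "contains a degree $n$ and discriminant $\PP^{n+J_0-1}$", which it does by construction — and that Proposition \ref{prop psi-resseg} indeed guarantees the roots of polynomials in $\Psi_{n,J_0,\rpol,\invA}(c)$ generate \emph{all} (not merely some) extensions with invariant $\invA$; the latter follows because $\Psi_{n,J_0,\rpol,\invA}(c) \subseteq \Psi_{n,J_0,\rpol}(c)$ already covers all extensions with ramification polygon $\rpol$, and Lemma \ref{lem A fix} together with Theorem \ref{theo A} shows the coefficient choices exhaust every representative of $\invA$. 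Granting these, the theorem follows by direct substitution and cancellation.
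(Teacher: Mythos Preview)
Your proposal is correct and follows exactly the approach the paper takes: apply the general counting lemma of Section~4 to the disc count $\#\Psi_{n,J_0,\rpol,\invA}(c)$ supplied by the preceding proposition, then simplify. The paper's own argument is in fact briefer than yours (for the analogous Theorem in Section~6 it is literally the one-line division, and for this theorem no proof is written at all), so your added remarks on independence of $c$ and on why $\Psi_{n,J_0,\rpol,\invA}(c)$ covers all extensions with invariant $\invA$ are useful elaborations rather than departures.
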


\begin{example}[Example \ref{ex cnt rp} continued]
We count all totally ramified extensions of $\Q_3$ with degree 9, discriminant $(3)^{9+7-1}$,
and ramification polygon $\rpol_2 = \{(1,7),(3,3),(9,0)\}$

As before, for an Eisenstein polynomial to have ramification polynomial $\rpol_2$, the minimum valuations of the coefficients would have to be
\[L_{\rpol_2}(i) = \max_s \{l_{\rpol_2}(i,s)\} = \left\{\begin{array}{ll}
 2  & \mbox{ for } i \in \{1,2,4,5\} \\
 1  & \mbox{ for } i \in \{3,6,7,8\}
\end{array}\right. .\]
So, $\sum_{i=1}^8 L_{\rpol_2}(i) = 12$ and the number of coefficients of fixed valuation is $\#B_{\rpol_2} = 2$.

There are four possible sets of residual polynomials of segments $(\RA_1,\RA_2)$ for extensions with ramification polygon $\rpol_2$, belonging to two invariants $\invA$:
\[
\invA_1 = \{ (x^2 + 1, x^3 + 1), (2x^2 + 2, x^3 + 2) \}
\]\[
\invA_2 = \{ (x^2 + 2, x^3 + 1), (2x^2 + 1, x^3 + 2) \}
\]
Each of these invariants contain two polynomials, so by applying the formula,
we find that there are $9 \cdot 2\cdot 3^{9+7-1-12-2} = 54$ degree 9 extensions of $\Q_3$
with ramification polygon $\rpol_2$ and a choice of $\invA$.

\end{example}

%%%%%%%%%%%%%%%%%%%%%%%%%%%%%%%%%%%%%%%%%%%%%%%%%%%%%%%%%%%%%%%%%%%%%%%%%%%%%%

\section{Examples}\label{sec counting ex}

In Table \ref{tab counting ex}, we show the number of extensions of degree 9 over $\Q_3$ with given invariants.
For discriminants $(3)^{9+J_0-1}$ with $J_0 \leq 12$, we list all possible ramification polygons and invariants $\invA$, and how many extensions exist with each set of invariants.

Additional examples for different base fields and degrees with all possible discriminants can be found at
\begin{center}
\url{http://www.uncg.edu/mat/numbertheory/tables/local/counting/}
\end{center}

\vfill

\renewcommand{\arraystretch}{1.1}
\begin{table}
\caption{Number of extensions of degree 9 for all possible ramification polygons and residual polynomials over $\Q_3$ with discriminant $(3)^{9+J_0-1}$ for $J_0 \leq 12$.}
\begin{center}

\begin{tabular}{|c|c|c|c|c|c|c|}\hline

        $J_0$ 
        &Ramification Polygon 
        &Representative of $\invA$%Residual Polynomials 
        &$\#\mathcal{A}$ 
        &\multicolumn{3}{|c|}{Extensions}\\
\hline
\multirow{1}{*}{1}&\multirow{1}{*}{$\{(1,1), (9,0)\}$}&$(z + 
1)$&2&18&\multirow{1}{*}{18}&\multirow{1}{*}{18}\\
\hline
\multirow{2}{*}{2}&\multirow{2}{*}{$\{(1,2), (9,0)\}$}&$(z^2 + 
1)$&1&9&\multirow{2}{*}{18}&\multirow{2}{*}{18}\\
\cline{3-5}
& &$(z^2 + 2)$&1&9& & \\
\hline
\multirow{4}{*}{4}&\multirow{2}{*}{$\{(1,4), (9,0)\}$}&$(z^4 + 
1)$&1&9&\multirow{2}{*}{18}&\multirow{4}{*}{54}\\
\cline{3-5}
& &$(z^4 + 2)$&1&9& & \\
\cline{2-6}
&\multirow{2}{*}{$\{(1,4), (3,3), (9,0)\}$}&$(z^4 + z + 
1)$&2&18&\multirow{2}{*}{36}& \\
\cline{3-5}
& &$(z^4 + z + 2)$&2&18& & \\
\hline
\multirow{3}{*}{5}&\multirow{1}{*}{$\{(1,5), (9,0)\}$}&$(z + 
1)$&2&18&\multirow{1}{*}{18}&\multirow{3}{*}{54}\\
\cline{2-6}
&\multirow{2}{*}{$\{(1,5), (3,3), (9,0)\}$}&$(z^2 + 1, z^3 + 
1)$&2&18&\multirow{2}{*}{36}& \\
\cline{3-5}
& &$(2z^2 + 1, z^3 + 2)$&2&18& & \\
\hline
\multirow{3}{*}{7}&\multirow{1}{*}{$\{(1,7), (9,0)\}$}&$(z + 
1)$&2&54&\multirow{1}{*}{54}&\multirow{3}{*}{162}\\
\cline{2-6}
&\multirow{2}{*}{$\{(1,7), (3,3), (9,0)\}$}&$(z^2 + 1, z^3 + 
1)$&2&54&\multirow{2}{*}{108}& \\
\cline{3-5}
& &$(2z^2 + 1, z^3 + 2)$&2&54& & \\
\hline
\multirow{8}{*}{8}&\multirow{2}{*}{$\{(1,8), (9,0)\}$}&$(z^8 + 
1)$&1&9&\multirow{2}{*}{18}&\multirow{8}{*}{162}\\
\cline{3-5}
& &$(z^8 + 2)$&1&9& & \\
\cline{2-6}
&\multirow{2}{*}{$\{(1,8), (3,3), (9,0)\}$}&$(z + 1, z^3 + 
1)$&2&54&\multirow{2}{*}{108}& \\
\cline{3-5}
& &$(z + 2, z^3 + 1)$&2&54& & \\
\cline{2-6}
&\multirow{4}{*}{$\{(1,8), (3,6), (9,0)\}$}&$(z^8 + z^2 + 
1)$&1&9&\multirow{4}{*}{36}& \\
\cline{3-5}
& &$(z^8 + 2z^2 + 1)$&1&9& & \\
\cline{3-5}
& &$(z^8 + z^2 + 2)$&1&9& & \\
\cline{3-5}
& &$(z^8 + 2z^2 + 2)$&1&9& & \\
\hline
\multirow{8}{*}{10}&\multirow{2}{*}{$\{(1,10), (9,0)\}$}&$(z^2 + 
1)$&1&27&\multirow{2}{*}{54}&\multirow{8}{*}{486}\\
\cline{3-5}
& &$(z^2 + 2)$&1&27& & \\
\cline{2-6}
&\multirow{2}{*}{$\{(1,10), (3,3), (9,0)\}$}&$(z + 1, z^3 + 
1)$&2&162&\multirow{2}{*}{324}& \\
\cline{3-5}
& &$(z + 2, z^3 + 1)$&2&162& & \\
\cline{2-6}
&\multirow{4}{*}{$\{(1,10), (3,6), (9,0)\}$}&$(z^2 + 1, z^6 + 
1)$&1&27&\multirow{4}{*}{108}& \\
\cline{3-5}
& &$(2z^2 + 1, z^6 + 2)$&1&27& & \\
\cline{3-5}
& &$(z^2 + 2, z^6 + 1)$&1&27& & \\
\cline{3-5}
& &$(2z^2 + 2, z^6 + 2)$&1&27& & \\
\hline
\multirow{5}{*}{11}&\multirow{1}{*}{$\{(1,11), (9,0)\}$}&$(z + 
1)$&2&54&\multirow{1}{*}{54}&\multirow{5}{*}{486}\\
\cline{2-6}
&\multirow{2}{*}{$\{(1,11), (3,3), (9,0)\}$}&$(z^2 + 1, z^3 + 
1)$&2&162&\multirow{2}{*}{324}& \\
\cline{3-5}
& &$(2z^2 + 1, z^3 + 2)$&2&162& & \\
\cline{2-6}
&\multirow{2}{*}{$\{(1,11), (3,6), (9,0)\}$}&$(z + 1, z^6 + 
1)$&2&54&\multirow{2}{*}{108}& \\
\cline{3-5}
& &$(2z + 1, z^6 + 2)$&2&54& & \\
\hline
\multirow{2}{*}{12}&\multirow{2}{*}{$\{(1,12), (3,3), (9,0)\}$}&$(2z + 1, z^3 +
2)$&1&243&\multirow{2}{*}{486}&\multirow{2}{*}{486}\\
\cline{3-5}
& &$(z + 2, z^3 + 1)$&1&243& & \\
\hline
\end{tabular}
\end{center}
\label{tab counting ex}
\end{table}

%%%%%%%%%%%%%%%%%%%%%%%%%%%%%%%%%%%%%%%%%%%%%%%%%%%%%%%%%%%%%%%%%%%%%%%%%%%%%%

\bibliography{local}
\bibliographystyle{amsalpha}
\end{document}